\newcommand{\smallfrac}[2]{{\textstyle\frac{#1}{#2}}} 
\newcommand{\jump}[1]{[\![#1]\!]}
\newcommand{\ave}[1]{\{\!\!\{#1\}\!\!\}}
\newcommand{\triple}[1]{|\!|\!|#1|\!|\!|}
\newcommand{\bff}{\mathbf}
\newtheorem{proposition}{Proposition}[section]
\newtheorem{corollary}[proposition]{Corollary}
\newtheorem{lemma}[proposition]{Lemma}
\numberwithin{equation}{section}
\title{Boundary-Finite Element discretization \\ of time dependent 
acoustic scattering  \\ by elastic obstacles with piezoelectric behavior}
\date{\today}
\author{Tonatiuh S\'anchez-Vizuet
 \& Francisco--Javier Sayas\footnote{TSV and FJS partially funded by NSF grant DMS 1216356.}  \\
Department of Mathematical Sciences, University of Delaware, USA\\
{\tt \{tonatiuh,fjsayas\}@udel.edu }}
\begin{document}

\maketitle

\begin{abstract}
A coupled BEM/FEM formulation for the transient interaction between an acoustic field and a piezoelectric scatterer is proposed. The scattered part of the acoustic wave is represented in terms of retarded layer potentials while the elastic displacement and electric potential are treated variationally. This results in an integro-differential system. Well posedness of a general Galerkin semi-discretization in space of the problem is shown in the Laplace domain and translated into explicit stability bounds in the time domain. Trapezoidal-Rule and BDF2 Convolution Quadrature are used in combination with matching time stepping for time discretization. Second order convergence is proven for the BDF2-based method. Numerical experiments are provided for BDF2 and Trapezoidal Rule based time evolution.  \\
{\bf AMS Subject classification.} 65R20, 65M38, 74J20, 74F10.\\
{\bf Keywords.} Time-Domain Boundary Integral Equations, Convolution Quadrature, Scattering, Coupling BEM/FEM, Piezoelectricity.
\end{abstract}

%
\section{Introduction}
%
The study of the interaction between acoustic waves and elastic structures has been subject of much work in recent decades. Many of the recent modeling and computational efforts have been driven by the need to develop and improve techniques for vibration control and reduction. Passive techniques rely on the use of sound absorbing materials that dissipate the energy of the acoustic wave and have been successfully used to damp high-frequency vibrations. On the other hand, active techniques employing piezoelectric materials exploit the adaptability of the piezoelectric solid to react to the vibrations in order to cancel them. Active materials are used to provide extra control in the low frequency range. 

In the frequency domain, works like \cite{DeLaOh:2008,DeLaOh:2009} have derived mathematical models and variational formulations suitable for numerical treatment of the process, their approach uses an effective load to model the action of the incident acoustic wave on the piezoelectric material and is geared towards a finite element solution of all the unknowns involved in the problem. In the time domain, \cite{AlHu:1970} is a classic reference for finite element simulation of waves in piezoelectrics, and a thorough review of the work done on this area up until the early 2000's can be found in \cite{Benjeddou:2000}. The propagation of plane waves waves in layered piezoelectric media has been addressed recently in \cite{FaLiPaWa:2008,YuZh:2012}, using analytical methods to study the reflection and transmission of plane waves at the interface of media with different material coefficients. Within the context of dynamic crack propagation in piezoelectric solids, finite element formulations have been explored in \cite{EnRiKu:2005,Nguyen:2012}, while Boundary Integral Equations (BIE) have been treated in \cite{Pan:1999,GaKo:2000,GaKoMoSc:2004}. Time domain BIE's for a purely piezoelectric problem have been used in recent works like \cite{GaZh:2008,QiZhZh:2015}, where a Nystr\"om  approach is followed for the space discretization and Convolution Quadrature is used in time. In both cases the model concerns only the propagation of the wave inside the piezoelectric material and no interaction with an acoustic wave is considered. 

The present article describes, discretizes, and analyzes the complete interaction problem, considering an incident acoustic wave that interacts with a piezoelectric scatterer through coupling boundary conditions inducing an elastic wave within the piezoelectric obstacle and a scattered acoustic wave traveling in a homogeneous unbounded domain. The system of PDE's used to model the problem combines the acoustic/elastic coupling conditions presented in \cite{HsSaSa:2016} for wave structure interaction, with the PDE's used in \cite{DeLaOh:2009} to describe the time evolution of the relevant variables. Aiming for a Finite Element discretization of the elastic and electric variables and a boundary element treatment of the acoustic wave, the system is translated into an integro-differential problem in the Laplace domain, the analysis is carried out following the techniques systematized in \cite{LaSa:2009a} and originated in the seminal work \cite{BaHa:1986a}. Galerkin discretization in space is used for all the variables, while Convolution Quadrature combined with time stepping are used for the time evolution.

We prove that the resulting fully discrete problem is well-posed and determine stability and error bounds with explicit time dependence for the time discretization based on second order backwards differentiation formulas (BDF2). A similar study with the backward Euler method is easy to obtain, while a Trapezoidal Rule CQ method is also available \cite{Banjai:2010}, but knowledge of the behavior of constants with respect to time is not known at current time. 

The paper is structured as follows. The general problem is presented in Section 2, where the time domain PDE model and the geometry are introduced along with the required notation and assumptions on physical parameters. Section 3 introduces the Laplace-transformed problem. Using standard BIE techniques we derive an equivalent integro-differential system and pose it variationally. The error equations satisfied by the resulting Galerkin space semi-discretization and the required elliptic projector are then presented. The core of the analysis is done in Section 4, where a slightly more general discrete system --encompassing both the discrete-in-space problem and the error equations-- is shown to be uniquely solvable by studying the variational formulation of an equivalent transmission problem; stability bounds are obtained in terms of the Laplace parameter $s$. The main results in the time domain are presented in Section 5, where the estimates obtained in the previous section are translated into the time domain and the system is fully discretized with BDF2-based Convolution Quadrature; for sufficiently smooth problem data second-order-in-time convergence is proven. Section 6 is dedicated to numerical experiments, some remarks pertaining the implementation of CQ and the coupling of boundary and finite elements are followed by experiments confirming convergence for the methods based in BDF2 and Trapezoidal Rule. Conclusions and possibilities for further studies are discussed in Section 7.
%
\paragraph{Background and notation.}
%
The following brackets
\[
(a,b)_\Omega:=\int_\Omega a\,b,
	\qquad
(\mathbf a,\mathbf b)_\Omega:=\int_\Omega\mathbf a\cdot\mathbf b,
	\qquad
(\mathbf A,\mathbf B)_\Omega:=\int_\Omega \mathbf A:\mathbf B,	
\]
will be used for scalar, vector and matrix-valued real $L^2$ inner products. (In the latter the colon denotes the Frobenius inner product of matrices.) When using complex-valued functions the brackets will remain bilinear and conjugation will be used as needed. If $\Omega$ is an open set with Lipschitz boundary or the complementary of such a set, then $H^1(\Omega)$ is the usual Sobolev space and its norm will be denoted by $\|\cdot\|_{1,\Omega}$. The space $H^{1/2}(\partial\Omega)$ is the trace space, while $H^{-1/2}(\partial\Omega)$ is the representation of its dual when $L^2(\partial\Omega)$ is identified with its dual space. The norms of  $H^{\pm1/2}(\partial\Omega)$, both for the real and complex valued cases, will be denoted $\|\cdot\|_{\pm 1/2,\partial\Omega}$. We will finally denote $\mathbf L^2(\Omega):=L^2(\Omega)^d$, $\mathbf H^1(\Omega):=H^1(\Omega)^d$, etc., and endow these spaces with the natural product norms. 
%
\section{Problem statement}
%
%
\paragraph{Geometric considerations.}
%
We will assume that the piezoelectric obstacle occupies a bounded, open and connected region $\Omega_-\subset\mathbb R^d$ with Lipschitz boundary $\Gamma$ which might not be connected. The boundary will be partitioned in two non-overlapping Dirichlet and Neumann parts, open relative to $\Gamma$ and such that $\Gamma = \overline\Gamma_D \cup \overline\Gamma_N$ and $\Gamma_D \neq\varnothing$. 

We will adopt the convention that the unit normal vector to the boundary $\boldsymbol\nu$ will be taken exterior to $\Omega_-$ and as such, it will always point towards the acoustic unbounded domain $\Omega_+ := \mathbb R^d \setminus \overline{\Omega}_-$.
%
\paragraph{Interior variables.}
%
In the interior domain the problem variables will be the elastic displacement field $\mathbf u$ and the electric potential $\psi$
\[
\mathbf u : \Omega_-\times[0,\infty) \longrightarrow \mathbb R^d,\qquad \psi : \Omega_-\times[0,\infty) \longrightarrow \mathbb R.
\]
Differential operators in the space variables will be unsubscripted meaning, for instance, that $\nabla\psi$ is the gradient in the space variables only. 
%
\paragraph{Physical parameters and tensors.}
%
We will use the following constitutive relations \cite{Tiersten:1969} to define the piezoelectric stress tensor $\boldsymbol \sigma$ and the electric displacement vector $\mathbf D$
\begin{equation}\label{eq:2.1}
\boldsymbol\sigma := \mathbf C \boldsymbol\varepsilon(\mathbf u) + \mathbf e \nabla\psi\,, \qquad \mathbf D := \mathbf e^\top \boldsymbol\varepsilon(\mathbf u) - \boldsymbol\epsilon\nabla\psi.
\end{equation}
In the above definition
\[
\boldsymbol\varepsilon(\mathbf u) := \tfrac{1}{2}\left(\nabla\mathbf u + \nabla\mathbf u ^{\top}\right)
\]
is the linear elastic strain tensor, $\mathbf C$, $\mathbf e$, and $\boldsymbol\epsilon$ are the elastic compliance, piezoelectric, and dielectric tensors respectively. They encode the electric and elastic properties of the material. For a real symmetric matrix $\mathbf M\in \mathbb R^{d\times d}_{sym}$ and for a vector, $\mathbf d \in \mathbb R^d$ we define
\begin{align*}
(\mathbf C(\mathbf x) \mathbf M)_{ij} := \,& \sum_{k,l}\mathbf C_{ijkl}(\mathbf x)\mathbf M_{kl}\,, \\
(\mathbf e(\mathbf x)\mathbf d)_{ij} := \,& \sum_{k}\mathbf e_{kij}\mathbf d_k\,, \\
(\mathbf e^\top(\mathbf x)\mathbf M)_{k} := \,& \sum_{ij}\mathbf e_{kij}\mathbf M_{ij}\,, \\
(\boldsymbol\epsilon(\mathbf x)\mathbf d)_i :=\,& \sum_{j}\boldsymbol\epsilon_{ij}(\mathbf x) \mathbf d_j.
\end{align*}
Due to physical considerations, these tensors exhibit the following symmetries \cite{GaKo:2001}:
\begin{equation}\label{eq:2.2}
\mathbf C_{ijkl} = \mathbf C_{jikl} = \mathbf C_{klij}\,,\quad \mathbf e_{lij} = \mathbf e_{lji}\,,\quad \boldsymbol\epsilon_{il}=\boldsymbol\epsilon_{li}.
\end{equation}
We will require that the components of the tensors are functions in $L^\infty(\Omega_-)$ and that for any symmetric matrix $\mathbf M\in \mathbb R^{d\times d}_{sym}$ and $\mathbf d \in \mathbb R^d$ there exist positive constants $c_0$ and $d_0$ such that for almost every $\mathbf x\in \Omega_-$
\[
\mathbf C(\mathbf x)\mathbf M:\mathbf M \geq c_0\mathbf M:\mathbf M \,, \qquad \boldsymbol\epsilon(\mathbf x) \mathbf d\cdot \mathbf d \geq d_0\mathbf d\cdot \mathbf d. 
\]
The density of the piezoelectric material will be denoted by $\rho_\Sigma$ and will be taken to be a strictly positive function in $L^\infty(\Omega_-)$.
%
\paragraph{Exterior domain.}
%
We will assume that the unbounded domain $\Omega_+$ surrounding the obstacle is filled with an homogeneous, isotropic and irrotational fluid with constant density $\rho_f$. Since the fluid is irrotational we can introduce a scalar velocity potential $v$ such that the actual fluid velocity can be expressed as $\mathbf v = \nabla v$. As is standard, the total acoustic wave field $v^{tot}$ will be the superposition of a given incident field $v^{inc}$ and an unknown scattered field $v$:
\[
v^{tot} = v^{inc}+v.
\]
Note that even if the domain $\Omega_-$ has a cavity the total wave there will contain a non-zero contribution of the incident wave, since $v^{inc}$ would be a solution of the problem in the absence of the scatterer. This decomposition is non-physical but mathematically meaningful. We will require that for $t\leq0$ the incident wave has not yet reached the obstacle. 
%
\paragraph{The PDE model.}
%
Under all the above considerations, and recalling the definitions \eqref{eq:2.1}, the interaction between the incident acoustic wave and the piezoelectric scatterer is governed by the following system of PDE's (see, for instance, \cite{Ihlenburg:1998,{DeLaOh:2009}}):

\begin{subequations}\label{eq:2.3}
\begin{alignat}{6}
\label{eq:2.3a}
\Delta v = &\, c^{-2}v_{tt} && \qquad \hbox{ in } \Omega_+\times[0,\infty), \\
\label{eq:2.3b}
\nabla\cdot\boldsymbol\sigma = &\,\rho_{\Sigma}\mathbf u_{tt}  &&\qquad \hbox{ in  }\Omega_-\times[0,\infty), \\
\label{eq:2.3c}
\nabla\cdot\mathbf D = &\, 0 &&\qquad \hbox{ in } \Omega_-\times[0,\infty),\\
\label{eq:2.3d}
\mathbf u_t\cdot\boldsymbol\nu + \partial_{\nu}v =&\, -\alpha_d && \qquad \hbox{ on } \Gamma\times[0,\infty),\\
\label{eq:2.3e}
\boldsymbol\sigma\,\boldsymbol\nu + \rho_f v_t\boldsymbol\nu =&\, -\rho_f (\beta_d)_t\boldsymbol\nu && \qquad \hbox{ on } \Gamma\times[0,\infty),\\
\label{eq:2.3f}
\mathbf D \cdot \boldsymbol\nu =\,& \eta_d &&\qquad \hbox{ on } \Gamma_N \times[0,\infty), \\
\label{eq:2.3g}
\psi = \,& \mu_d && \qquad \hbox{ on } \Gamma_D\times[0,\infty),
\end{alignat}
\end{subequations}
with homogeneous initial conditions for $v$, $v_t$, $\mathbf u$, and $\mathbf u_t$. 

The problem data is 
\begin{alignat*}{10}
\alpha_d :=\,&& \partial_\nu v^{inc}|_{\Gamma} :\,& \Gamma \times[0,\infty) \,&&\longrightarrow \mathbb R , && 
	\qquad \eta_d :\,&& \Gamma_N \times[0,\infty) \,&&\longrightarrow \mathbb R ,\\
\beta_d :=\,&& v^{inc}|_{\Gamma} :\,& \Gamma \times[0,\infty) \,&&\longrightarrow \mathbb R,
	&& \qquad \mu_d :\,&& \Gamma_D \times[0,\infty) \,&&\longrightarrow \mathbb R.
\end{alignat*}
This system can be given a rigorous form using causal distributions taking values in Sobolev spaces as in \cite{HsSaSa:2016,LaSa:2009a}, etc. For the time being will will deal only with the Laplace transform of this system and will come back to the time domain only at the time of giving stability and error estimates.

\begin{figure}\centering
\includegraphics[scale=.8]{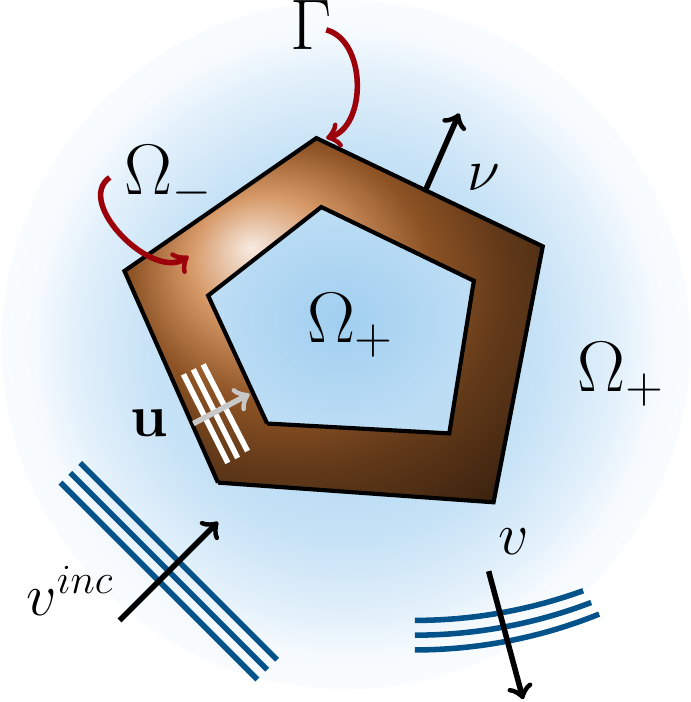}
\caption{{\footnotesize Scheme of the scattering geometry. The problem unknowns are the scattered acoustic wave $v$ (defined in the unbounded region $\Omega_+$), the induced elastic wave $\mathbf u$, and the electric potential $\psi$ (both defined inside the obstacle $\Omega_-$).The total elastic boundary $\Gamma$ is the disjoint union of the electric Dirichlet boundary $\Gamma_D$, and the electric Neumann boundary $\Gamma_N$ which are both open relative to $\Gamma$. The normal vectors $\boldsymbol \nu$ are exterior to the elastic domain and point towards the acoustic domain $\Omega_+$.}}\label{fig:1}
\end{figure}
%
\section{A Laplace domain semidiscrete problem}
%
%
In order to cast this problem rigorously we will need to define precisely the meaning of quantities on the boundary.  Given $v \in H^1(\mathbb R^d\setminus\Gamma)$, its interior, exterior, averaged, and difference traces will be denoted by:
\[
\gamma^-v, \quad \gamma^+v, \quad \ave{v}:=\smallfrac12(\gamma^-v+\gamma^+v),
\quad
\jump{\gamma v}:=\gamma^-v-\gamma^+v.
\]
For $(\mathbf u,\psi)\in \mathbf H^1(\Omega_-)\times H^1(\Omega_-)$ such that 
\[
\boldsymbol\sigma(\mathbf u,\psi)=\mathbf C\boldsymbol\varepsilon(\mathbf u) + \mathbf e\nabla\psi \in L^2(\Omega_-)^{d\times d}
\]
we define the weak interior traction field in analogy to the purely elastic normal stress with the formula
\[
\langle\boldsymbol\sigma\boldsymbol\nu,\gamma \mathbf w\rangle_\Gamma:=
	(\mathbf C \boldsymbol\varepsilon(\mathbf u),\boldsymbol\varepsilon(\mathbf w))_{\Omega_-} + (\mathbf e \nabla \psi,\boldsymbol\varepsilon(\mathbf w))_{\Omega_-} + (\nabla\cdot \boldsymbol\sigma,\mathbf w)_{\Omega_-}
\qquad\forall\mathbf w\in \mathbf H^1(\Omega_-).
\]
In a similar fashion, for $(\mathbf u, \psi) \in\mathbf H^1(\Omega_-)\times H^1(\Omega_-)$ such that 
\[
\mathbf D(\mathbf u,\psi)= \mathbf e^\top \boldsymbol\varepsilon(\mathbf u) - \epsilon\nabla\psi\in \mathbf L^2(\Omega_-),
\] 
the weak normal electric displacement will be defined by 
\[
\langle \mathbf D \cdot \boldsymbol\nu, \gamma w\rangle_\Gamma := (\boldsymbol\varepsilon(\mathbf u),\mathbf e \nabla w)_{\Omega_-}-(\boldsymbol\epsilon\nabla\psi,\nabla w)_{\Omega_-}+(\nabla\cdot\mathbf D,w)_{\Omega_-}\qquad\forall w\in H^1(\Omega_-).
\]
These are, respectively, elements of the dual spaces $\mathbf H^{-1/2}(\Gamma)$ and $H^{-1/2}(\Gamma)$. The $\Gamma$-subscripted angled bracket will be used to denote the duality product of either $H^{-1/2}(\Gamma)$ with $H^{1/2}(\Gamma)$ or $\mathbf H^{-1/2}(\Gamma)$ with $\mathbf H^{1/2}(\Gamma)$. 

To deal with the Dirichlet and Neumann conditions for the electric potential, we need additional notation. The restriction to the Dirichlet boundary of the trace of a function $u\in H^1(\Omega_-)$ will be denoted by $\gamma_Du:=\gamma u|_{\Gamma_D}$ and we define the function spaces
\begin{alignat*}{10}
H^{1/2}(\Gamma_D) :=\,& \left\{\gamma_D u : u \in H^1(\Omega_-) \right\}, &\qquad H^{1}_D(\Omega_-) :=\,& \left\{ u\in H^1(\Omega_-) : \gamma_D u=0 \right\}, \\
\widetilde H^{1/2}(\Gamma_N) :=\,&\left\{\gamma u|_{\Gamma_N}: u\in H^1_D(\Omega_-) \right\}, & \qquad H^{-1/2}(\Gamma_N) :=\,& \big(\widetilde H^{1/2}(\Gamma_N)\big)'.
\end{alignat*}
The angled bracket $\langle\cdot,\cdot\rangle_{\Gamma_N}$ should be understood as the duality pairing of $H^{-1/2}(\Gamma_N)$ with $\widetilde H^{1/2}(\Gamma_N)$. 
%
\paragraph{Laplace domain``dynamic" problem.}
%
From this point on, we will only consider the problem in the Laplace-domain and we will use the same symbol to represent a function and its Laplace transform without ambiguity. Let $s\in \mathbb C_+ :=\left\{s\in\mathbb C : \mathrm{Re} s >0\right\}$ and let
\[
(\alpha_d,\beta_d,\eta_d,\mu_d) \in H^{-1/2}(\Gamma)\times H^{1/2}(\Gamma)\times H^{-1/2}(\Gamma_N)\times H^{1/2}(\Gamma_D)
\]
be problem data. We look for $(v,\mathbf u,\psi)\in H^1(\Omega_+)\times\mathbf H^1(\Omega_-)\times H^1(\Omega_-)$ such that 
\begin{subequations}\label{eq:3.1}
\begin{alignat}{6}
\label{eq:3.1a}
\Delta v = &\, (s/c)^{2} v && \qquad \hbox{ in }\Omega_+, \\
\label{eq:3.1b}
\nabla\cdot\boldsymbol\sigma = &\,\rho_{\Sigma}s^2{\mathbf u}  &&\qquad \hbox{ in  }\Omega_-, \\
\label{eq:3.1c}
\nabla\cdot\mathbf D = &\, 0 &&\qquad \hbox{ in } \Omega_-,\\
\label{eq:3.1d}
s\gamma\mathbf u\cdot\boldsymbol\nu + \partial_{\nu}^+v =&\, -\alpha_d && \qquad \hbox{ on } \Gamma,\\
\label{eq:3.1e}
\boldsymbol\sigma\boldsymbol\nu + \rho_fs\gamma^+ v \boldsymbol\nu=&\, -\rho_fs\beta_d\boldsymbol\nu && \qquad \hbox{ on } \Gamma,\\
\label{eq:3.1f}
\mathbf D\cdot\boldsymbol\nu =\,& \eta_d &&\qquad \hbox{ on } \Gamma_N, \\
\label{eq:3.1g}
\gamma\psi = \,& \mu_d && \qquad \hbox{ on } \Gamma_D,
\end{alignat}
\end{subequations}
where $\boldsymbol\sigma$ and $\mathbf D$ are defined by \eqref{eq:2.1}.
%
\paragraph{Calder\'on calculus for the acoustic problem.}
%
The transmission problem
\begin{alignat*}{6}
\Delta v -s^2 v =0 &\qquad & \mbox{in $\mathbb R^d\setminus\Gamma$},\\
\jump{\gamma v}= \phi,\\
\jump{\partial_\nu v}= \lambda,
\end{alignat*}
with $\lambda\in H^{-1/2}(\Gamma)$ and $\phi\in H^{1/2}(\Gamma)$, is uniquely solvable, and its solution can be expressed using the integral representation formula
\begin{equation}\label{eq:3.2}
v=\mathrm S(s)\lambda -\mathrm D(s)\phi.
\end{equation}
The operators $\mathrm S$ and $\mathrm D$ are known as the single and double layer potentials respectively. Associated with the potentials, the following integral operators can be defined
\begin{alignat*}{6}
&\mathrm V(s):=\ave{\gamma \cdot}\mathrm S(s)=\gamma \mathrm S(s), & \qquad &
\mathrm K(s):=\ave{\gamma\cdot} \mathrm D(s),\\
&\mathrm K^\top(s):=\ave{\partial_\nu\cdot}\mathrm S(s),& \qquad &
\mathrm W(s):=-\ave{\partial_\nu\cdot} \mathrm D(s)=-\partial_\nu\mathrm D(s).
\end{alignat*}
We recall the following useful identities
\begin{equation}\label{eq:3.3}
\partial_{\nu}^{\pm} \mathrm S(s) = \mp \tfrac{1}{2}\mathrm I+ \mathrm K^\top(s) \,,\qquad \gamma^{\pm}\mathrm D(s) = \pm \tfrac{1}{2}\mathrm I + \mathrm K(s).
\end{equation}
%
\paragraph{A coupled integro-differential system.}
%
Having defined the notation and tools we will borrow from potential theory, we can now introduce the continuous integro-differential system that will be treated numerically later on. 

If $(v,\mathbf u, \psi)$ is a solution of \eqref{eq:3.1}, then $v$ can be represented as
\begin{equation}\label{eq:3.4}
v = \mathrm D(s/c)\phi - S(s/c)\lambda,
\end{equation}
where $\phi=\gamma^+v$ and $\lambda=\partial_{\nu}^+v$. Note that this representation can be extended to $\Omega_-$, yielding $v\equiv 0$ in $\Omega_-$. Therefore if we  use \eqref{eq:3.3} to  write $\gamma^-v$, we arrive at
\begin{equation}\label{eq:3.5}
\mathrm V(s/c)\lambda - \left(-\tfrac{1}{2}\mathrm I+\mathrm K(s/c)\right)\phi =  0 \qquad \hbox{ on }\;\Gamma.
\end{equation}
Additionally, \eqref{eq:3.1d} and the identities \eqref{eq:3.3} imply
\begin{equation}\label{eq:3.6}
\left(-\tfrac{1}{2}\mathrm I+\mathrm K^\top(s/c)\right)\lambda + \mathrm W(s/c)\phi - s\gamma\mathbf u\cdot\boldsymbol\nu  =\, \alpha_d \qquad \hbox{ on }\;\Gamma.
\end{equation}
Finally, \eqref{eq:3.1e} can be written in terms of $\phi$
\begin{equation}\label{eq:3.7}
\boldsymbol\sigma\boldsymbol\nu + \rho_fs\phi\boldsymbol\nu =\, -\rho_fs\beta_d\boldsymbol\nu \qquad \hbox{ on }\;\Gamma.
\end{equation}
Reciprocally, if the integro-differential equations \eqref{eq:3.5} through \eqref{eq:3.7} are satisfied, if $(\mathbf u, \psi,\lambda,\phi)$ satisfies \eqref{eq:3.1b}, \eqref{eq:3.1c}, \eqref{eq:3.1f}, and \eqref{eq:3.1g}, and we define $v$ with the representation formula \eqref{eq:3.4}, it follows that $\lambda = \partial^+_\nu v$, $\phi =\gamma^+v$, and we recover the system \eqref{eq:3.1}.
%
\paragraph{A variational formulation.}
%
We next define the elastodynamic bilinear form
\begin{subequations}\label{eq:3.99}
\begin{equation}\label{eq:3.99a}
b(\mathbf u,\mathbf w;s):=\, (\mathbf C \boldsymbol\varepsilon(\mathbf u),\boldsymbol\varepsilon(\mathbf w))_{\Omega_-}+s^2(\rho_\Sigma \mathbf u, \mathbf w)_{\Omega_-},
\end{equation}
and the coupled elastic-electric bilinear form
\begin{alignat}{6}
\mathcal B((\mathbf u,\psi),(\mathbf w,\varphi);s)
	&:=&& \, b(\mathbf u,\mathbf w;s) + (\mathbf e\nabla \psi,\boldsymbol\varepsilon(\mathbf w))_{\Omega_-} \nonumber  \\
	& && -(\boldsymbol\varepsilon(\mathbf u),\mathbf e\nabla \varphi)_{\Omega_-}+
(\boldsymbol\epsilon\nabla \psi,\nabla \varphi)_{\Omega_-}, \label{eq:3.99b} 
\end{alignat}
which is bounded in $\mathbf H^1(\Omega_-)\times H^1(\Omega_-)$. We also collect all the integral operators in \eqref{eq:3.5}-\eqref{eq:3.6} in a single matrix of operators
\begin{equation}\label{eq:3.99c}
\mathbb D(s):=
	\left[\begin{array}{cc}
		\mathrm V(s) & +\frac12\mathrm I-\mathrm K(s) \\
		-\frac12\mathrm I+\mathrm K^\top(s) & \mathrm W(s)
	\end{array}\right] : H^{-1/2}(\Gamma)\times H^{1/2}(\Gamma)\to
						H^{1/2}(\Gamma) \times H^{-1/2}(\Gamma).
\end{equation}
\end{subequations}
For the sake of notational simplicity, we will write $\mathbb D(s)(\lambda,\phi)$ for the action of $\mathbb D(s)$ on the column vector $(\lambda,\phi)^\top$.
We now present the continuous variational formulation of the problem which reads: 

Given data $(\alpha_d,\beta_d,\eta_d,\mu_d) \in H^{-1/2}(\Gamma)\times H^{1/2}(\Gamma)\times H^{-1/2}(\Gamma_N)\times H^{1/2}(\Gamma_D)$, find $(\mathbf u, \psi,\lambda,\phi)\in \mathbf H^1(\Omega_-)\times H^1(\Omega_-)\times H^{-1/2}(\Gamma)\times H^{1/2}(\Gamma)$ such that
\begin{subequations}\label{eq:3.8}
\begin{equation}
\gamma_D \psi =\mu_d,
\end{equation}
and for all $(\mathbf w, \varphi) \in \mathbf H^1(\Omega_-)\times H^1(\Omega_-)$ and $(\xi,\chi) \in  H^{-1/2}(\Gamma)\times H^{1/2}(\Gamma)$
\begin{alignat}{6}
\mathcal B((\mathbf u,\psi),(\mathbf w,\varphi);s)	
	& +\rho_fs\langle\phi,\gamma\mathbf w\cdot\boldsymbol\nu \rangle_{\Gamma} 
	&& = -\rho_fs\langle\beta_d,\gamma\mathbf w\cdot\boldsymbol\nu \rangle_{\Gamma}
		+   \langle\eta_d,\gamma \varphi\rangle_{\Gamma_N}, &  \\
-s\langle \gamma\mathbf u\cdot\boldsymbol\nu, \chi\rangle_{\Gamma}
	& +\langle\mathbb D(s/c)(\lambda,\phi),(\xi,\chi)\rangle_\Gamma
	&&=\langle\alpha_d, \chi\rangle_{\Gamma}.
\end{alignat}
\end{subequations}

%
\paragraph{Discrete formulation.}
%
In order to discretize the system \eqref{eq:3.8} a few definitions are in order. We consider finite dimensional subspaces
\[
\mathbf V_h \subseteq \mathbf H^1(\Omega_-),\quad V_h\subseteq H^1(\Omega_-), \quad V_{h,D}:=V_h\cap H^1_D(\Omega_-),\quad X_h\subseteq H^{-1/2}(\Gamma), \quad Y_h\subseteq H^{1/2}(\Gamma). 
\]
(Note that, following \cite{LaSa:2009a}, the theoretical treatment of the $s$-dependent discrete problem only uses that these spaces are closed. This has the advantage of simultaneously providing a well-posedness analysis of the continuous problem.) It will be assumed that the set 
\[
\mathbf M := \left\{ \mathbf m\in\mathbf H^1(\Omega_-): \boldsymbol\varepsilon(\mathbf m) = 0 \quad\forall \mathbf w\in\mathbf H^1(\Omega_-) \right\}
\]
of elastic rigid motions is always contained in $\mathbf V_h$. 
In the discrete case, the Dirichlet boundary condition will be approximated in the space $\gamma_D V_h := \left\{ \gamma_Dv^h: v^h\in V_h \right\}$. With all this in mind, we can now pose the discrete counterpart of \eqref{eq:3.8}. 

Given problem data $(\alpha_d,\beta_d,\eta_d,\mu_d^h) \in H^{-1/2}(\Gamma)\times H^{1/2}(\Gamma)\times H^{-1/2}(\Gamma_N)\times \gamma_DV_h$,  find $(\mathbf u^h, \psi^h,\lambda^h,\phi^h)\in \mathbf V_h\times V_h\times Y_h\times X_h$ such that
\begin{subequations}\label{eq:3.9}
\begin{equation}
\gamma_D \psi^h =\mu_d^h,
\end{equation}
and for all $(\mathbf w, \varphi) \in \mathbf V_h\times V_{h,D}$ and $(\xi,\chi)\in X_h\times Y_h$
\begin{alignat}{6}
\mathcal B((\mathbf u^h,\psi^h),(\mathbf w,\varphi);s)	
	& +\rho_fs\langle\phi^h,\gamma\mathbf w\cdot\boldsymbol\nu \rangle_{\Gamma} 
	&& = -\rho_fs\langle\beta_d,\gamma\mathbf w\cdot\boldsymbol\nu \rangle_{\Gamma}
		+   \langle\eta_d,\gamma \varphi\rangle_{\Gamma_N}, &  \\
\label{eq:3.9c}
-s\langle \gamma\mathbf u^h\cdot\boldsymbol\nu, \chi\rangle_{\Gamma}
	& +\langle\mathbb D(s/c)(\lambda^h,\phi^h),(\xi,\chi)\rangle_\Gamma
	&&=\langle\alpha_d, \chi\rangle_{\Gamma}.
\end{alignat}
\end{subequations}
A short-hand form of \eqref{eq:3.9c} can be given using polar sets. If $U_h$ is a subspace of the Hilbert space $U$, the expression $v\in U_h^\circ\subset U'$ ($v$ is in the polar set of $U_h$), will be shorthand for
\[
\langle v, u\rangle_{U'\times U} = 0 \qquad \forall u\in U_h.
\]
We can then shorten \eqref{eq:3.9c} as
\[
-(0,s\gamma\mathbf u^h\cdot\boldsymbol\nu)
	+\mathbb D(s/c)(\lambda^h,\phi^h)
	-(0,\alpha_d) \in X_h^\circ\times Y_h^\circ \equiv (X_h\times Y_h)^\circ.
\]
%
\paragraph{Trace liftings.}
%
By definition, the restriction of the trace to the Dirichlet boundary
\[
\gamma_D : H^1(\Omega_-) \longrightarrow H^{1/2}(\Gamma_D)
\]
is a surjective operator, and so is
\[
\gamma_{h,D}: = \gamma_D|_{V_h}:  V_h  \longrightarrow \gamma_D V_h.
\]
Note that there exists a bounded right-inverse of $\gamma_D$ (or lifting) which will be denoted by $\gamma_D^\dagger$. For the discrete counterpart, the existence of a right-inverse of $\gamma_{h,D}$ that is bounded uniformly in $h$ will be assumed (see \cite{DoSa:2003a}) and will be denoted $\gamma_{h,D}^\dagger$. 
%
\paragraph{An elliptic projector.}
%
A projection operator will be required in order to project the solid-electric component of the exact solution on the discrete space. Given $(\mathbf u,\psi,\mu_d^h) \in \mathbf H^1(\Omega_-)\times H^1_D(\Omega_-)\times\gamma_DV_h$ we will write $(\mathbf P_h \mathbf u, \mathrm P_h \psi) \in \mathbf V_h \times V_{h,D}$ to denote the pair satisfying
\begin{subequations}\label{eq:3.10}
\begin{equation}
\gamma_D\mathrm P_h \psi =\mu_d^h,
\end{equation}
the discrete variational equation
\begin{equation}\label{eq:3.10c}
\mathcal B((\mathbf P_h \mathbf u,\mathrm P_h\psi),(\mathbf w,\varphi);0)
	=\mathcal B(( \mathbf u,\psi),(\mathbf w,\varphi);0)
		\qquad \forall (\mathbf w,\varphi)\in \mathbf V_h\times V_{h,D},
\end{equation}
and the `grounding condition'
\begin{equation}
(\mathbf P_h\mathbf u,\mathbf m)_{\Omega_-} = (\mathbf u,\mathbf m)_{\Omega_-}
	\qquad \forall \mathbf m \in \mathbf M.
\end{equation}
\end{subequations}
Note that the bilinear form in \eqref{eq:3.10} does not contain the $s$-dependent term (we have set $s=0$), which is the kinetic part of the elastic-electric bilinear form $\mathcal B$. Note also that both $\mathbf P_h u$ and $\mathrm P_h\psi$ depend on $(\mathbf u,\psi)$ as well as on the discrete data $\mu_d^h$. We will keep the simplified (and somewhat misleading) notation for the sake of simplicity. The next lemma shows that this elliptic projection is quasioptimal.

\begin{lemma}\label{lem:3.1}
Problem \eqref{eq:3.10} is uniquely solvable and there exists $C>0$ such that
\begin{align*}
\|\mathbf u -\mathbf P_h\mathbf u\|_{1,\Omega_-}+\|\psi-\mathrm P_h\psi\|_{1,\Omega_-} \leq & C\Big(\inf_{\mathbf w \in \mathbf V_h}\|\mathbf u -\mathbf w\|_{1,\Omega_-}+\inf_{\varphi \in V_h}\|\psi -\varphi\|_{1,\Omega_-}\\
	& \qquad +\|\gamma_D\psi-\mu_d^h\|_{1/2,\Gamma}\Big).
\end{align*}
\end{lemma}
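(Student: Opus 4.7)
The proof rests on a structural observation about the bilinear form at $s=0$: the cross-coupling terms $(\mathbf e\nabla\psi,\boldsymbol\varepsilon(\mathbf w))_{\Omega_-}$ and $-(\boldsymbol\varepsilon(\mathbf u),\mathbf e\nabla\varphi)_{\Omega_-}$ enter with opposite signs, so testing against the trial pair gives
\[
\mathcal B((\mathbf u,\psi),(\mathbf u,\psi);0) = (\mathbf C\boldsymbol\varepsilon(\mathbf u),\boldsymbol\varepsilon(\mathbf u))_{\Omega_-} + (\boldsymbol\epsilon\nabla\psi,\nabla\psi)_{\Omega_-}.
\]
By the pointwise ellipticity of $\mathbf C$ and $\boldsymbol\epsilon$, together with Korn's inequality on the quotient $\mathbf H^1(\Omega_-)/\mathbf M$ and the Poincaré--Friedrichs inequality on $H^1_D(\Omega_-)$, the right-hand side dominates $\|\mathbf u\|_{1,\Omega_-}^2+\|\psi\|_{1,\Omega_-}^2$ uniformly for $(\mathbf u,\psi)$ in the product space $(\mathbf V_h\cap\mathbf M^\perp)\times V_{h,D}$, where $\mathbf M^\perp$ denotes the $L^2(\Omega_-)$-orthogonal complement. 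Boundedness of $\mathcal B(\cdot,\cdot;0)$ is immediate from the $L^\infty$ hypotheses on the material tensors.

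For well-posedness, I would lift the Dirichlet datum via $\widetilde\mu_h := \gamma_{h,D}^\dagger\mu_d^h\in V_h$ and seek the unknown as $\mathrm P_h\psi = \widetilde\mu_h + \widetilde\psi_h$ with $\widetilde\psi_h\in V_{h,D}$, and $\mathbf P_h\mathbf u = \mathbf m_h + \mathbf u_h^\perp$ with $\mathbf m_h\in\mathbf M\subseteq\mathbf V_h$ and $\mathbf u_h^\perp\in\mathbf V_h\cap\mathbf M^\perp$. The variational equation \eqref{eq:3.10c} tested against $(\mathbf w,\varphi)\in(\mathbf V_h\cap\mathbf M^\perp)\times V_{h,D}$ (for which the rigid-motion part of the trial is invisible since $\boldsymbol\varepsilon(\mathbf m_h)=0$) becomes a coercive problem for $(\mathbf u_h^\perp,\widetilde\psi_h)$ and is uniquely solved by Lax--Milgram. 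The grounding condition $(\mathbf P_h\mathbf u,\mathbf m)_{\Omega_-}=(\mathbf u,\mathbf m)_{\Omega_-}$ for all $\mathbf m\in\mathbf M$ then fixes $\mathbf m_h$ uniquely, since $\mathbf u_h^\perp\perp\mathbf M$ in $L^2$.

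For quasi-optimality, \eqref{eq:3.10c} yields the Galerkin orthogonality $\mathcal B((\mathbf u-\mathbf P_h\mathbf u,\psi-\mathrm P_h\psi),(\mathbf w,\varphi);0)=0$ for all $(\mathbf w,\varphi)\in\mathbf V_h\times V_{h,D}$. Given arbitrary $\mathbf z_h\in\mathbf V_h$ and $\varphi_h\in V_h$, I would set $\zeta_h := \varphi_h + \gamma_{h,D}^\dagger(\mu_d^h-\gamma_D\varphi_h)\in V_h$, which satisfies $\gamma_D\zeta_h=\mu_d^h$ and (by uniform boundedness of the discrete lifting together with trace continuity) obeys
\[
\|\psi-\zeta_h\|_{1,\Omega_-} \leq C\bigl(\|\psi-\varphi_h\|_{1,\Omega_-} + \|\gamma_D\psi-\mu_d^h\|_{1/2,\Gamma}\bigr).
\]
Define the discrete errors $\mathbf e_h := \mathbf P_h\mathbf u - \mathbf z_h\in\mathbf V_h$ and $e_h := \mathrm P_h\psi - \zeta_h\in V_{h,D}$, and split $\mathbf e_h = \mathbf e_h^{\mathbf M} + \mathbf e_h^\perp$ orthogonally in $L^2$. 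Testing Galerkin orthogonality against $(\mathbf e_h^\perp,e_h)$ and invoking the coercivity from the first paragraph yields
\[
\|\mathbf e_h^\perp\|_{1,\Omega_-}+\|e_h\|_{1,\Omega_-} \lesssim \|\mathbf u-\mathbf z_h\|_{1,\Omega_-}+\|\psi-\zeta_h\|_{1,\Omega_-}.
\]
The rigid-motion piece $\mathbf e_h^{\mathbf M}$ is controlled by the grounding condition: $(\mathbf e_h^{\mathbf M},\mathbf m)_{\Omega_-}=(\mathbf u-\mathbf z_h,\mathbf m)_{\Omega_-}$ for every $\mathbf m\in\mathbf M$, so $\|\mathbf e_h^{\mathbf M}\|_{L^2(\Omega_-)}\leq\|\mathbf u-\mathbf z_h\|_{L^2(\Omega_-)}$, and since $\mathbf M$ is finite-dimensional, norm equivalence promotes this to an $H^1$-bound. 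A triangle inequality and taking infima over $\mathbf z_h$ and $\varphi_h$ separately then delivers the claim.

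The only delicate points are the simultaneous handling of the rigid-motion nullspace (controlled by grounding plus Korn on $\mathbf M^\perp$) and of the inhomogeneous discrete trace (controlled by the uniformly bounded discrete lifting $\gamma_{h,D}^\dagger$); the antisymmetric piezoelectric coupling, which might at first look like an obstacle to coercivity, is in fact the feature that makes the proof go through unchanged from the purely elastic-dielectric setting.
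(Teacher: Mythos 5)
Your proof is correct and follows essentially the same route as the paper's: coercivity of $\mathcal B(\cdot,\cdot;0)$ on the rigid-motion--orthogonal complement times $H^1_D(\Omega_-)$ (exploiting the antisymmetric cancellation of the piezoelectric coupling, Korn, and Poincar\'e--Friedrichs), a C\'ea-type argument, the grounding condition to fix the rigid-motion component, and the $h$-uniform discrete lifting $\gamma_{h,D}^\dagger$ for the nonhomogeneous Dirichlet datum. The only cosmetic difference is that the paper augments $\mathcal B$ with the penalty terms $\sum_i(\mathbf u,\mathbf m_i)_{\Omega_-}(\mathbf w,\mathbf m_i)_{\Omega_-}$ to get coercivity on the full space, whereas you split off $\mathbf M$ by $L^2$-orthogonal projection and treat it separately; you also spell out the ``standard arguments'' for the trace lifting that the paper only cites.
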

\begin{proof}
The bilinear form $\mathcal B(\cdot,\cdot;0)$ is coercive in the space \[
\{\mathbf u \in \mathbf H^1(\Omega_-) : (\mathbf u, \mathbf m)_{\Omega_-}=0\quad \forall \mathbf m \in \mathbf M\}\times H^1_D(\Omega_-),
\]
as follows from the second Korn and Poincar\'e-Friedrichs inequalities, since $\Gamma_D\neq\varnothing$. If $\{\mathbf m_i: i=1,\ldots,N_d\}$ is a basis for the space $\mathbf M$, then a simple compactness argument shows that the bilinear form
\[
\mathcal B((\mathbf u,\psi),
(\mathbf w,\varphi);0)+\sum_{i=1}^{N_d}(\mathbf u,\mathbf m_i)_{\Omega_-}(\mathbf w,\mathbf m_i)_{\Omega_-}
\]
is coercive in $H^1(\Omega_-)\times H^1_D(\Omega_-)$. Therefore, when $\mu^h_D=0$, it is simple to see that \eqref{eq:3.10} is just a Galerkin discretizations in $\mathbf V_h\times V_{h,D}$ of a coercive problem, and therefore a C\'ea estimate holds. The consideration of non-homogeneous boundary conditions, leading to an estimate like the one on the statement of the Lemma, can be approached using standard arguments based on the hypothesis of the existence of an $h$-uniform lifting of $\gamma_{h,D}$ \cite{DoSa:2003a}.
\end{proof}
%
\paragraph{Error equations.}
%
The error will be analyzed using the variables
\begin{alignat*}{6}
\mathbf e_{\mathbf u}^h :=\,&\mathbf P_h\mathbf u -\mathbf u^h, &\qquad
e_\psi^h :=\,&\mathrm P_h\psi - \psi^h, &\\
e_\lambda^h :=\,& \lambda -\lambda^h, &\qquad
e_\phi^h :=\,& \phi - \phi^h, &
\end{alignat*}
which satisfy
\begin{subequations}\label{eq:3.11}
\begin{equation}
\gamma_D e^\psi_h = 0,
\end{equation}
and for all $(\mathbf w, \varphi) \in \mathbf V_h\times V_{h,D}$
\begin{align}
\mathcal B((\mathbf e^h_{\mathbf u},e_\psi^h),(\mathbf w,\varphi);s)	
	 +\rho_fs\langle e_\phi^h,\gamma\mathbf w\cdot\boldsymbol\nu \rangle_{\Gamma} - s^2(\rho_\Sigma(\mathbf P_h\mathbf u-\mathbf u),\mathbf w)_{\Omega_-} 
	& = 0, &  \\
-(0,s\gamma\mathbf e_{\mathbf u}^h\cdot\boldsymbol\nu)
	 +\mathbb D(s/c)(e_\lambda^h,e_\phi^h) - (0,s\gamma(\mathbf P_h\mathbf u-\mathbf u)\cdot\boldsymbol\nu) 
	&\in X^\circ_h\times Y^\circ_h.
\end{align}
\end{subequations}
%
\section{Laplace domain analysis}
%
We consider a slightly more general problem from which both stability and error estimates for \eqref{eq:3.9} will be otained. Data are
\begin{subequations}
\begin{equation}\label{eq:4.1a}
(\alpha_d,\beta_d,\eta_d,\mu_d^h) \in H^{-1/2}(\Gamma)\times H^{1/2}(\Gamma)\times H^{-1/2}(\Gamma_N)\times \gamma_DV_h,
\end{equation}
and
\begin{equation}\label{eq:4.1b}
(\boldsymbol\theta_d,\theta_d,\lambda_d,\phi_d) \in \mathbf H^{1}(\Omega_-)\times L^2(\Gamma)\times H^{-1/2}(\Gamma)\times H^{1/2}(\Gamma),
\end{equation}
\end{subequations}
and we look for
\[
(\widehat{\mathbf u}^h,\widehat\psi^h)\in \mathbf V_h\times V_{h,D} \quad \text{and}\quad (\widehat\lambda^h,\widehat\phi^h)\in H^{-1/2}(\Gamma)\times H^{1/2}(\Gamma)
\]
such that for all $(\mathbf w,\varphi)\in\mathbf V_h\times V_{h,D}$ 
\begin{subequations}\label{eq:4.2}
\begin{align}
\label{eq:4.2a}
\gamma_D\widehat\psi^h = \,& \mu^h_d, \\
\nonumber
\mathcal B((\widehat{\mathbf u}^h,\widehat\psi^h),(\mathbf w, \varphi);s) + \rho_fs
\langle\widehat\phi^h,\gamma\mathbf w\cdot\boldsymbol\nu\rangle_\Gamma =\,& -\rho_fs\langle\beta_d,\gamma\mathbf w\cdot\boldsymbol\nu\rangle_\Gamma + \langle\eta_d,\gamma\varphi\rangle_{\Gamma_N}\\
\label{eq:4.2b}
	&  +s^2(\rho_\Sigma\boldsymbol\theta_d,\mathbf w)_{\Omega_-}, \\
\label{eq:4.2c}
-(0,s\gamma\widehat{\mathbf u}^h\cdot\boldsymbol\nu) + \mathbb D(s/c)(\widehat\lambda^h,\widehat\phi^h) - (0,\alpha_d+s\theta_d) \in\,& X^\circ_h\times Y^\circ_h, \\
\label{eq:4.2d}
(\widehat\lambda^h,\widehat\phi^h)-(\lambda_d,\phi_d) \in\,& X_h\times Y_h.
\end{align}
\end{subequations}
Note that if the first group of data \eqref{eq:4.1a} is set to be zero and we take
\[
(\boldsymbol\theta_d,\theta_d,\lambda_d,\phi_d) = (\mathbf P_h\mathbf u-\mathbf u, \gamma(\mathbf P_h\mathbf u-\mathbf u)\cdot\boldsymbol\nu,\lambda,\phi),
\] the system \eqref{eq:4.2} reduces to the error equations \eqref{eq:3.11}, while if the second group of data \eqref{eq:4.1b} is identically zero then the discrete equations \eqref{eq:3.9} are recovered.
%
\paragraph{Two equivalent problems.}
%
Using the Galerkin equations \eqref{eq:4.2} as the starting point, the analysis will proceed as in \cite{LaSa:2009a} by finding an equivalent transmission problem that can then be studied variationally. Solvability will then be established for the variational formulation and the stability constants will be obtained from the variational problem as well.

\begin{proposition}[Transmission problem]\label{prop:4.1}
If $(\widehat{\mathbf u}^h,\widehat\psi^h,\widehat\lambda,\widehat\phi)$ solves \eqref{eq:4.2}, and 
\begin{equation}\label{eq:4.99}
\widehat v^h := \mathrm D(s/c)\widehat\phi^h-\mathrm S(s/c)\widehat\lambda^h,
\end{equation}
then $(\widehat v^h,\widehat{\mathbf u}^h,\widehat \psi^h)\in H^1(\mathbb R^d\setminus\Gamma)\times\mathbf V_h\times V_{h,D}$ satisfies for all $(\mathbf w,\varphi)\in\mathbf V_h\times V_{h,D}$
\begin{subequations}\label{eq:4.3}
\begin{align}
\label{eq:4.3a}
-\Delta \widehat v^h +(s/c)^2\widehat v^h =\,& 0 \qquad \qquad \hbox{ in }\; \mathbb R^d\setminus\Gamma,\\
\label{eq:4.3b}
\gamma_D\widehat\psi^h =\,& \mu_d^h \qquad \qquad \hbox{ on }\; \Gamma, \\
\nonumber
\mathcal B((\widehat{\mathbf u}^h,\widehat\psi^h),(\mathbf w, \varphi);s) - \rho_fs
\langle\jump{\gamma\widehat v^h},\gamma\mathbf w\cdot\boldsymbol\nu\rangle_\Gamma =\,& -\rho_fs\langle\beta_d,\gamma\mathbf w\cdot\boldsymbol\nu\rangle_\Gamma + \langle\eta_d,\gamma\varphi\rangle_{\Gamma_N} \\
\label{eq:4.3c}
	&  +s^2(\rho_\Sigma\boldsymbol\theta_d,\mathbf w)_{\Omega_-},  \\
\label{eq:4.3d}
-(0,s\gamma\widehat{\mathbf u}^h\cdot\boldsymbol\nu) + (\gamma^-\widehat v^h,\partial^+_\nu \widehat v^h) - (0,\alpha_d+s\theta_d) \in\,& X^\circ_h\times Y^\circ_h, \\
\label{eq:4.3e}
(\jump{\partial_\nu\widehat v^h}+\lambda_d,\jump{\gamma \widehat v^h}+\phi_d)  \in\,& X_h\times Y_h.
\end{align}
\end{subequations}
Conversely, given a solution triplet $(\widehat v^h,\widehat{\mathbf u}^h, \widehat\psi^h)$ to \eqref{eq:4.3} and defining $\widehat\lambda^h:= -\jump{\partial_\nu \widehat v^h}$, and $\widehat\phi^h:=-\jump{\gamma \widehat v^h}$, the functions $(\widehat{\mathbf u}^h,\widehat\psi^h,\widehat\lambda^h,\widehat\phi^h)$ solve \eqref{eq:4.2}.
\end{proposition}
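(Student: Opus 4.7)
The plan is to establish a direct dictionary between the boundary unknowns $(\widehat\lambda^h,\widehat\phi^h)$ of \eqref{eq:4.2} and the jumps of the layer-potential function $\widehat v^h$ defined in \eqref{eq:4.99}, and then to check that each equation of \eqref{eq:4.2} transforms line-by-line into its counterpart in \eqref{eq:4.3}. The key ingredient is the pair of jump relations \eqref{eq:3.3}, which, together with the continuity of the single-layer trace $\gamma\mathrm S(s)=\mathrm V(s)$ and of the normal derivative of the double layer, $\partial_\nu\mathrm D(s)=-\mathrm W(s)$, yield
\[
\jump{\gamma\widehat v^h}=-\widehat\phi^h,\qquad \jump{\partial_\nu\widehat v^h}=-\widehat\lambda^h,
\]
for $\widehat v^h$ built as in \eqref{eq:4.99}. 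These two identities are the dictionary; essentially everything else is bookkeeping.

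For the forward implication I would go through \eqref{eq:4.3} one equation at a time. Equation \eqref{eq:4.3a} is immediate because $\mathrm S(s/c)$ and $\mathrm D(s/c)$ are built from the fundamental solution of $-\Delta+(s/c)^2$; \eqref{eq:4.3b} is literally \eqref{eq:4.2a}; \eqref{eq:4.3c} follows from \eqref{eq:4.2b} by substituting $\widehat\phi^h=-\jump{\gamma\widehat v^h}$; and \eqref{eq:4.3e} follows from \eqref{eq:4.2d} by applying both jump identities and using that $X_h\times Y_h$ is a subspace. The one computation that is not a direct substitution is for \eqref{eq:4.3d}: expanding $\gamma^-\widehat v^h$ and $\partial_\nu^+\widehat v^h$ with the help of \eqref{eq:3.3} and collecting terms gives, component by component,
\[
\mathbb D(s/c)(\widehat\lambda^h,\widehat\phi^h)=-\bigl(\gamma^-\widehat v^h,\,\partial_\nu^+\widehat v^h\bigr).
\]
Substituting this identity into \eqref{eq:4.2c} and using that the polar set $X_h^\circ\times Y_h^\circ$ is closed under sign change yields \eqref{eq:4.3d}.

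For the converse, given $(\widehat v^h,\widehat{\mathbf u}^h,\widehat\psi^h)$ solving \eqref{eq:4.3}, I would define $\widehat\lambda^h:=-\jump{\partial_\nu\widehat v^h}$ and $\widehat\phi^h:=-\jump{\gamma\widehat v^h}$ and then appeal to Green's representation formula. Since $\widehat v^h\in H^1(\mathbb R^d\setminus\Gamma)$ satisfies the homogeneous modified Helmholtz equation, the uniqueness statement preceding \eqref{eq:3.2} forces $\widehat v^h=\mathrm S(s/c)\jump{\partial_\nu\widehat v^h}-\mathrm D(s/c)\jump{\gamma\widehat v^h}$, which in terms of $\widehat\lambda^h,\widehat\phi^h$ becomes precisely \eqref{eq:4.99}. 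With this identification, the identity relating $\mathbb D(s/c)(\widehat\lambda^h,\widehat\phi^h)$ to the one-sided traces of $\widehat v^h$ still holds, and the computations of the forward direction run in reverse, recovering \eqref{eq:4.2a}--\eqref{eq:4.2d} in turn.

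The only real obstacle is keeping sign conventions consistent: the minus sign between the two potentials in \eqref{eq:4.99}, the convention $\jump{\gamma\cdot}=\gamma^--\gamma^+$, and the layout of the block operator $\mathbb D(s/c)$ must all be tracked simultaneously. No coercivity, compactness, or variational argument is invoked at this stage; the proposition is a purely algebraic equivalence between two formulations of the same discrete-continuous system.
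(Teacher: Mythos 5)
Your proposal is correct and follows essentially the same route as the paper's proof: establish the dictionary $\jump{\gamma\widehat v^h}=-\widehat\phi^h$, $\jump{\partial_\nu\widehat v^h}=-\widehat\lambda^h$ together with $\mathbb D(s/c)(\widehat\lambda^h,\widehat\phi^h)=-(\gamma^-\widehat v^h,\partial_\nu^+\widehat v^h)$, substitute line by line, and for the converse recover \eqref{eq:4.99} from the jump-based representation formula (your jump-based form of that formula is in fact slightly cleaner than the paper's, which writes it with $\gamma^+$ and $\partial_\nu^+$). One remark: your substitution into \eqref{eq:4.2c}, after the global sign flip allowed by the polar set being a subspace, yields the inclusion with the \emph{same} sign on all three terms, namely $(0,s\gamma\widehat{\mathbf u}^h\cdot\boldsymbol\nu)+(\gamma^-\widehat v^h,\partial_\nu^+\widehat v^h)+(0,\alpha_d+s\theta_d)\in X_h^\circ\times Y_h^\circ$, which differs from the printed \eqref{eq:4.3d} in the signs of the first and last terms; this is a sign typo in the statement of \eqref{eq:4.3d} (the version you derive is the one actually used later in the proof of Proposition \ref{prop:4.2}), so your bookkeeping is the correct one.
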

\begin{proof}
Given a solution of \eqref{eq:4.2}  and defining $\widehat v^h$ as in \eqref{eq:4.99}, it follows from the properties of the layer potentials that \eqref{eq:4.3a} is satisfied. Another consequence of this definition is that $\jump{\gamma\widehat v^h}=-\widehat\phi^h$ and $\jump{\partial_\nu\widehat v^h}=-\widehat\lambda^h$, which shows that equations \eqref{eq:4.3c} and \eqref{eq:4.3e} follow readily from \eqref{eq:4.2b}  and \eqref{eq:4.2d} by substitution of the above terms. Moreover, using the identities \eqref{eq:3.3} to compute $\gamma^-\widehat v^h$ and $\partial_\nu^+\widehat v^h$, it can be seen that \eqref{eq:4.2c} and \eqref{eq:4.3d} are equivalent.

The proof of the converse is very similar and requires only to observe that \eqref{eq:4.3a} allows for the layer potential representation of the acoustic field
\[
\widehat v^h = \mathrm D(s/c)\gamma^+\widehat v^h-\mathrm S(s/c)\partial_\nu^+ \widehat v^h.
\]
Thus, defining $\widehat\lambda^h$ and $\widehat\phi^h$ as in the statement all the above arguments can be repeated to show that equations \eqref{eq:4.2} hold.
\end{proof}

The system \eqref{eq:4.3} can now be treated variationally. To do this we introduce the space
\[
V_h^* := \left\{w\in H^1(\mathbb R^d\setminus\Gamma):\jump{\gamma w} \in Y_h,\;\hbox{ and }\; \gamma^-w\in X_h^\circ\right\}.
\]
The following proposition gives the equivalent variational formulation from which the solvability and stability bounds of the entire problem will be deduced.

\begin{proposition}[Variational formulation]\label{prop:4.2}
Consider the bilinear and linear forms
\begin{align*}
\mathcal A((v,\mathbf u,\psi),(w,\mathbf w,\varphi);s):=&\; (\nabla v,\nabla w)_{\mathbb R^d\setminus\Gamma} + (s/c)^2(v,w)_{\mathbb R^d\setminus\Gamma} & \\
& + \mathcal B((\mathbf u,\psi),(\mathbf w,\varphi);s) \\
		& + \rho_fs\langle\gamma\mathbf{u}\cdot\boldsymbol\nu,\jump{\gamma w} \rangle_{\Gamma} - \rho_fs\langle\jump{\gamma v},\gamma\mathbf{w}\cdot\boldsymbol\nu \rangle_{\Gamma},\\
	\\
\ell\left((w,\mathbf{w},\varphi);s\right):=\,& -\langle\lambda_d,\gamma^-w\rangle_{\Gamma} +\langle\alpha_d+s\theta_d,\jump{\gamma w}\rangle_{\Gamma} \\
	&  +s^2(\rho_\Sigma\boldsymbol\theta_d,\mathbf w)_{\Omega_-}-\rho_fs\langle\beta_d,\gamma\mathbf w\cdot\boldsymbol\nu\rangle_\Gamma \\
	& +\langle\eta_d,\gamma\varphi\rangle_{\Gamma_N}   .
\end{align*}
The system \eqref{eq:4.3} is equivalent to the problem of finding $(\widehat v^h,\widehat{\mathbf u}^h, \widehat\psi^h)\in H^1(\mathbb R^d\setminus\Gamma)\times \mathbf V_h \times V_{h,D}$ such that
\begin{subequations}\label{eq:4.4}
\begin{alignat}{6}
\label{eq:4.4a}
\gamma_D\widehat\psi^h =\;& \mu_d^h, &&\\
\label{eq:4.4b}
(\jump{\gamma \widehat v^h}+\phi_d,\gamma^-\widehat v^h) \in\;& Y_h\times X_h^\circ, &&\\
\label{eq:4.4c}
\mathcal A((\widehat v^h,\widehat{\mathbf u}^h,\widehat\psi^h),(w,\mathbf w,\varphi);s)=\;&\ell\left((w,\mathbf w,\varphi);s\right) \quad && \forall (w,\mathbf w,\varphi)\in  V_h^*\times\mathbf V_h \times V_{h,D} .
\end{alignat}
\end{subequations}
\end{proposition}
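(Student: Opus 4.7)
The plan is to verify the two implications $\eqref{eq:4.3}\Rightarrow\eqref{eq:4.4}$ and $\eqref{eq:4.4}\Rightarrow\eqref{eq:4.3}$ separately. The common technical ingredient is the Green identity for the Helmholtz-type PDE in \eqref{eq:4.3a}: whenever $\widehat v^h \in H^1(\mathbb R^d \setminus \Gamma)$ satisfies $-\Delta \widehat v^h + (s/c)^2 \widehat v^h = 0$ on each side of $\Gamma$, its one-sided normal derivatives $\partial^\pm_\nu \widehat v^h$ are well-defined in $H^{-1/2}(\Gamma)$ and integration by parts against any $w \in H^1(\mathbb R^d \setminus \Gamma)$ gives
\begin{equation*}
(\nabla \widehat v^h, \nabla w)_{\mathbb R^d \setminus \Gamma} + (s/c)^2 (\widehat v^h, w)_{\mathbb R^d \setminus \Gamma} = \langle \jump{\partial_\nu \widehat v^h}, \gamma^- w\rangle_\Gamma + \langle \partial^+_\nu \widehat v^h, \jump{\gamma w}\rangle_\Gamma,
\end{equation*}
after the two one-sided boundary pairings are rewritten via the algebraic identity $a^- b^- - a^+ b^+ = \jump{a}\,\gamma^- b + a^+ \jump{b}$.

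For the forward direction, \eqref{eq:4.4a} is identical to \eqref{eq:4.3b}, and \eqref{eq:4.4b} packages the $Y_h$-component of \eqref{eq:4.3e} together with the $X_h^\circ$-component of \eqref{eq:4.3d}. For \eqref{eq:4.4c} I would apply the Green identity above with an arbitrary $w \in V_h^*$: the constraints $\gamma^- w \in X_h^\circ$ and $\jump{\gamma w} \in Y_h$ make the $X_h$-valued part of $\jump{\partial_\nu \widehat v^h} + \lambda_d$ coming from \eqref{eq:4.3e} and the polar-set part of the acoustic residual in \eqref{eq:4.3d} drop out, leaving only the data contributions $-\langle\lambda_d,\gamma^- w\rangle_\Gamma$ and $\langle\alpha_d+s\theta_d,\jump{\gamma w}\rangle_\Gamma$ together with the fluid-structure coupling term on the boundary. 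Adding the elastic variational equation \eqref{eq:4.3c} tested against $(\mathbf w,\varphi)\in\mathbf V_h\times V_{h,D}$ then assembles the full identity $\mathcal A(\cdot,\cdot;s)=\ell(\cdot;s)$.

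For the reverse direction, \eqref{eq:4.3b} is \eqref{eq:4.4a}; testing \eqref{eq:4.4c} with $(0,\mathbf w,\varphi)$ recovers \eqref{eq:4.3c}; and testing with $(w,0,0)$ for $w\in C_c^\infty(\Omega_\pm)$ extended by zero (which sits trivially in $V_h^*$, having zero jump and zero interior trace) yields the PDE \eqref{eq:4.3a} distributionally in each of $\Omega_\pm$. The $X_h^\circ$-condition on $\gamma^-\widehat v^h$ in \eqref{eq:4.3d} and the $Y_h$-condition on $\jump{\gamma\widehat v^h}+\phi_d$ in \eqref{eq:4.3e} are both built into \eqref{eq:4.4b}. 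The step I expect to be the main obstacle is extracting the two remaining transmission conditions, namely the $X_h$-membership of $\jump{\partial_\nu\widehat v^h}+\lambda_d$ in \eqref{eq:4.3e} and the $Y_h^\circ$-condition encoded in \eqref{eq:4.3d}. With \eqref{eq:4.3a} in hand, the Green identity inserted into \eqref{eq:4.4c} with $(w,0,0)$, $w\in V_h^*$, reduces the acoustic block to a boundary identity of the form
\begin{equation*}
\langle \jump{\partial_\nu \widehat v^h}+\lambda_d,\, \gamma^- w\rangle_\Gamma + \langle R,\, \jump{\gamma w}\rangle_\Gamma = 0 \qquad \forall w\in V_h^*,
\end{equation*}
with $R$ the acoustic-elastic coupling residual on $\Gamma$. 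To conclude, I would prove that the map $w\mapsto(\gamma^- w,\jump{\gamma w})$ sends $V_h^*$ onto $X_h^\circ\times Y_h$: given any target $(\xi,\chi)\in X_h^\circ\times Y_h$, independently lifting $\xi$ into $\Omega_-$ and $\xi-\chi$ into $\Omega_+$ and gluing produces a $w\in V_h^*$ realizing that pair. Once this surjectivity is in place the two pairings can be varied independently, yielding $\jump{\partial_\nu\widehat v^h}+\lambda_d\in(X_h^\circ)^\circ=X_h$ (using the closedness of $X_h$ assumed in Section 3) and $R\in Y_h^\circ$, which finishes \eqref{eq:4.3d} and \eqref{eq:4.3e}.
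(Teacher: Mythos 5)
Your proposal is correct and follows essentially the same route as the paper: the forward direction rests on the same integration-by-parts identity (the paper's equation \eqref{eq:4.5}) combined with the polar-set conditions in \eqref{eq:4.3d}--\eqref{eq:4.3e}, and the reverse direction on the surjectivity of the map $w\mapsto(\gamma^-w,\jump{\gamma w})$ from $V_h^*$ onto $X_h^\circ\times Y_h$, which the paper invokes in an equivalent (if less explicitly justified) form. Your reorganization of the converse --- first extracting the PDE with compactly supported test functions, then reducing to a boundary identity and applying the bipolar theorem $(X_h^\circ)^\circ=X_h$ --- is a slightly cleaner presentation of the paper's ``every line must vanish independently'' step, but it is the same argument in substance.
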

\begin{proof}
Given a solution $(\widehat v^h,\widehat{\mathbf u}^h, \widehat\psi^h)$ to \eqref{eq:4.3} we note that \eqref{eq:4.4b} is equivalent to the first component of \eqref{eq:4.3d} and the second component of \eqref{eq:4.3e}. Moreover, testing $\partial_\nu^+\widehat v$ with $\jump{\gamma w}$ for $w\in V_h^*$, we obtain
\begin{align}
\nonumber
\langle\partial_\nu^+\widehat v^h,\jump{\gamma w}\rangle_\Gamma =\,&
	 \langle \partial_\nu^-\widehat v^h,\gamma^-w\rangle_\Gamma - \langle\partial_\nu^+ \widehat v^h,\gamma^+w\rangle_\Gamma -\langle\jump{\partial_\nu\widehat v^h},\gamma^-w\rangle_\Gamma \\
\nonumber
=\,	& (\nabla\widehat v^h,\nabla w)_{\mathbb R^d\setminus\Gamma}+(\Delta \widehat v^h,w)_{\mathbb R^d\setminus\Gamma}-\langle\jump{\partial_\nu\widehat v^h},\gamma^-w\rangle_\Gamma\\
\nonumber
=\,	& (\nabla\widehat v^h,\nabla w)_{\mathbb R^d\setminus\Gamma}+(s/c)^2(\widehat v^h,w)_{\mathbb R^d\setminus\Gamma}-\langle\jump{\partial_\nu\widehat v^h}+\lambda_d,\gamma^-w\rangle_\Gamma+\langle\lambda_d,\gamma^-w\rangle_\Gamma \\
\label{eq:4.5}
=\,	& (\nabla\widehat v^h,\nabla w)_{\mathbb R^d\setminus\Gamma}+(s/c)^2(\widehat v^h,w)_{\mathbb R^d\setminus\Gamma}+\langle\lambda_d,\gamma^-w\rangle_\Gamma,
\end{align}
where we have used the definition of the weak normal derivative $\partial_\nu^\pm\widehat v^h$ in conjunction with equations \eqref{eq:4.3a}, \eqref{eq:4.3d} and the first component of \eqref{eq:4.3e}. Therefore, for $w\in V_h^*$ it follows from the second component of \eqref{eq:4.3e} and \eqref{eq:4.5} that 
\[
(\nabla\widehat v^h,\nabla w)_{\mathbb R^d\setminus\Gamma}+(s/c)^2(\widehat v^h,w)_{\mathbb R^d\setminus\Gamma}+\langle\lambda_d,\gamma^-w\rangle_\Gamma+\langle s\gamma\widehat{\mathbf u}^h\cdot\boldsymbol\nu + \alpha_d+s\theta_d,\jump{\gamma w}\rangle_\Gamma =0.
\]
This expression in combination with \eqref{eq:4.3c} are equivalent to \eqref{eq:4.4c}. 

To verify the converse statement, we expand the bilinear form in \eqref{eq:4.4c} and rewrite it in terms of the interior/exterior normal derivatives of $\widehat v^h$ and its Laplacian to show that
\begin{align*}
0 = \; &(\Delta \widehat v^h,w)_{\Omega_-} -(s/c)^2(\widehat v^h,w)_{\Omega_-} \\
& + \mathcal B((\widehat{\mathbf u}^h,\widehat\psi^h),(\mathbf w,\varphi);s) -s^2(\rho_\Sigma\boldsymbol\theta_d,\mathbf w)_{\Omega_-}- \rho_fs\langle\jump{\gamma \widehat v^h}+\beta_d,\gamma \mathbf{w}\cdot\boldsymbol\nu \rangle_{\Gamma} -\langle\eta_d,\gamma\varphi\rangle_{\Gamma_N}\\
&  + \langle\rho_fs\gamma\widehat{\mathbf u}^h\cdot\boldsymbol\nu+\alpha_d+s\theta_d,\jump{\gamma w} \rangle_{\Gamma} \\
& +\langle\jump{\partial_\nu\widehat v^h}+\lambda_d,\gamma^-w\rangle_{\Gamma}  .
\end{align*}
Once the equation is rewritten in this form, it is enough to notice that the mapping
\begin{align*}
V_h^*\times \mathbf V_h\times V_{h,D} \,& \longrightarrow V_h^*\times \mathbf V_h\times V_{h,D}\times X_h^\circ\times Y_h \\
(w,\mathbf w,\varphi) \; & \longmapsto (w,\mathbf w,\varphi,\gamma^-w,\jump{\gamma w})
\end{align*}
is surjective to conclude that every line of the above expression must vanish independently, which implies --line by line-- equations \eqref{eq:4.3a}, \eqref{eq:4.3c}, the second component of \eqref{eq:4.3d},  and the first component of \eqref{eq:4.3e}. The boundary condition \eqref{eq:4.3b} is given and the remaining two components of \eqref{eq:4.3d} and \eqref{eq:4.3e} are imposed strongly by the choice of function spaces.
\end{proof}
%
\paragraph{Well-posedness and stability.}
%
For $s\in \mathbb C_+$ we write
\[
\sigma:=\mathrm{Re}\,s>0\,, \qquad\underline\sigma:=\min\{\sigma,1\}.\]
To shorten some of the forthcoming expressions, we will denote:
\begin{alignat*}{6}
\| (v,\bff u,\psi)\|_1^2 
	:=\, &\rho_f\|\nabla v\|_{\mathbb R^d\setminus\Gamma}^2
		+\rho_fc^{-2}\| v\|_{\mathbb R^d}^2\\
	& +(\mathbf C\boldsymbol\varepsilon(\mathbf u),\boldsymbol\varepsilon(\overline{\mathbf u}))_{\Omega_-} +(\rho_\Sigma \bff u,\overline{\bff u})_{\Omega_-}+ (\boldsymbol\epsilon\nabla\psi,\nabla\overline{\psi})_{\Omega_-}.
\end{alignat*}
Following the program laid out in \cite{LaSa:2009a}, we define the energy norm
\begin{align*}
\triple{(v,\mathbf u,\psi)}_{|s|}^2:=\,& \rho_f\|\nabla v\|^2_{\mathbb R^d\setminus\Gamma} +\rho_fc^{-2}\|s v\|^2_{\mathbb R^d} \\
 	& +(\mathbf C \boldsymbol\varepsilon(\mathbf u),\boldsymbol\varepsilon(\overline{\mathbf u}))_{\Omega_-} +\|s\sqrt{\rho_\Sigma} \mathbf u\|^2_{\Omega_-}+ (\boldsymbol\epsilon\nabla\psi,\nabla\overline{\psi})_{\Omega_-},
\end{align*}
which includes kinetic and potential contributions from the acoustic and elastic fields, and the potential energy from the dielectric field. Notice that since $\Gamma_D\neq\varnothing$ this defines a norm in $V_h^*\times\mathbf V_h \times V_{h,D}$.
A simple computation shows that
\begin{equation}\label{eq:4.6}
\underline\sigma \| (v,\mathbf u,\psi)\|_1
	\le \triple{(v,\mathbf u,\psi)}_{|s|}
	\le \frac{|s|}{\underline\sigma}\| (v,\mathbf u,\psi)\|_1.
\end{equation}
\begin{proposition}[Well-posedness]\label{prop:4.3}
Problem \eqref{eq:4.4} is uniquely solvable for any
\begin{align*}
(\alpha_d,\beta_d,\eta_d,\mu_d^h) \in\;& H^{-1/2}(\Gamma)\times H^{1/2}(\Gamma)\times H^{-1/2}(\Gamma_N)\times \gamma_DV_h, \\
(\boldsymbol\theta_d,\theta_d,\lambda_d,\phi_d) \in\;& \mathbf H^{1}(\Omega_-)\times L^2(\Gamma)\times H^{-1/2}(\Gamma)\times H^{1/2}(\Gamma),
\end{align*}
and $s\in\mathbb C_+$. Moreover, there exists $C>0$ independent of $h$  and $s$ such that
\begin{subequations}\label{eq:4.7}
\begin{align}
\label{eq:4.7a}
\|(\widehat v^h,\widehat{\mathbf u}^h,\widehat\psi^h)\|_1 + \|\widehat\phi^h\|_{1/2,\Gamma}\leq\,&C\frac{|s|}{\sigma\underline{\sigma}^2} A(\mathrm{data},s),\\
\label{eq:4.7b}
\|\widehat\lambda^h\|_{-1/2,\Gamma} \leq\;& C\frac{|s|^{3/2}}{\sigma\underline{\sigma}^{3/2}} A(\mathrm{data},s),
\end{align}
\end{subequations}
where
\begin{align*}
A(\mathrm{data},s) := & \|\alpha_d\|_{-1/2,\Gamma} + \|s\beta_d\|_{1/2,\Gamma} + \|\eta_d\|_{-1/2,\Gamma} + \|s\mu_d^h\|_{1/2,\Gamma}\\
	& +\|s^2\boldsymbol\theta_d\|_{\Omega_-}+\|s\theta_d\|_{\Gamma}+\|\lambda_d\|_{-1/2,\Gamma}+\|s\phi_d\|_{1/2,\Gamma}.
\end{align*}
\end{proposition}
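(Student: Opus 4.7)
The plan is to follow the Laplace-domain coercivity framework systematized in \cite{LaSa:2009a}. The approach has three stages: homogenization of the Dirichlet-type data so that the residual unknown lies in a closed subspace on which $\triple{\cdot}_{|s|}$ is a norm; a coercivity test yielding control of the energy norm; and postprocessing to recover the $\widehat\phi^h$ and $\widehat\lambda^h$ bounds.

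\textbf{Step 1 (Homogenization).} I would subtract from $\widehat\psi^h$ the lifting $\gamma_{h,D}^\dagger\mu_d^h\in V_h$ (bounded uniformly in $h$), and subtract from $\widehat v^h$ an $H^1(\mathbb R^d\setminus\Gamma)$-lifting of $\phi_d$ supported in $\Omega_-$ so that the residual $v_0$ satisfies $\jump{\gamma v_0}\in Y_h$. The resulting problem is: find $(v_0,\mathbf u^h,\psi_0^h)\in V_h^*\times\mathbf V_h\times V_{h,D}$ satisfying \eqref{eq:4.4c} with a modified right-hand side $\widetilde\ell$ that absorbs the liftings, each contributing a factor of order $|s|$ to $A(\mathrm{data},s)$ as dictated by the bilinear form $\mathcal A$. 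Since $\Gamma_D\neq\varnothing$, $\triple{\cdot}_{|s|}$ is a norm on this product space.

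\textbf{Step 2 (Coercivity).} The key identity is obtained by testing with $(\bar s\overline{v_0},\bar s\overline{\mathbf u^h},\bar s\overline{\psi_0^h})$. The Dirichlet-type and mass contributions of $\mathcal A$ produce, after taking real parts, exactly $\sigma\triple{(v_0,\mathbf u^h,\psi_0^h)}_{|s|}^2$, because $\mathrm{Re}(\bar s)=\sigma$ multiplies the $s$-free symmetric terms while the terms already carrying $s^2$ pick up $\mathrm{Re}(\bar s\cdot s^2)/|s|^2=\sigma$. The crucial algebraic cancellation is that the two coupling terms
\[
\rho_f s\langle\gamma\mathbf u^h\cdot\boldsymbol\nu,\jump{\gamma v_0}\rangle_\Gamma-\rho_f s\langle\jump{\gamma v_0},\gamma\mathbf u^h\cdot\boldsymbol\nu\rangle_\Gamma
\]
satisfy $\mathrm{Re}(\bar s\cdot \mathrm{antisym})=\mathrm{Re}(|s|^2(z-\bar z))=0$, so they vanish from the real part. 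Lax--Milgram in the closed subspace then gives unique solvability (the argument doubles as a well-posedness proof because the hypotheses on $\mathbf V_h,V_h,X_h,Y_h$ require only closedness).

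\textbf{Step 3 (Stability bounds).} Bounding the right-hand side $\mathrm{Re}\,\bar s\,\widetilde\ell(\cdot)$ termwise by $|s|$ times each datum norm times a trace norm of the test triple, and absorbing all trace norms into $C\triple{\cdot}_{|s|}/\underline\sigma$ via \eqref{eq:4.6}, one obtains $\sigma\triple{\cdot}_{|s|}^2\le C(|s|/\underline\sigma)A(\mathrm{data},s)\triple{\cdot}_{|s|}$. Dividing and reconverting from $\triple{\cdot}_{|s|}$ to $\|\cdot\|_1$ with \eqref{eq:4.6} yields the factor $|s|/(\sigma\underline\sigma^2)$ in \eqref{eq:4.7a}. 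The estimate on $\widehat\phi^h=-\jump{\gamma\widehat v^h}$ follows from the $H^{1/2}$ trace inequality. For \eqref{eq:4.7b}, testing the distributional identity $\jump{\partial_\nu\widehat v^h}=$ a combination of $(\nabla\widehat v^h,\nabla w)+(s/c)^2(\widehat v^h,w)$ and $\langle\partial_\nu^+\widehat v^h,\jump{\gamma w}\rangle_\Gamma$ with $w$ an $H^1(\mathbb R^d\setminus\Gamma)$ lifting of a given $H^{1/2}(\Gamma)$ element, together with the stability of such liftings, produces an extra $|s|^{1/2}/\underline\sigma^{1/2}$ penalty compared with \eqref{eq:4.7a}.

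\textbf{Main obstacle.} The delicate part is the bookkeeping of $|s|$-powers: each lifting and each trace inequality inflates the bound by $|s|/\underline\sigma$ or $|s|^{1/2}/\underline\sigma^{1/2}$, and the final prefactors must come out exactly as in \eqref{eq:4.7} for later use in the time-domain transfer theorem. Equally important is verifying that the coupling cross terms cancel under $\bar s$-testing, since otherwise the coercivity inequality would lose the clean factor $\sigma$ and the stability bounds would degrade.
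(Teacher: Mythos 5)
Your proposal follows essentially the same route as the paper: coercivity of $\mathrm{Re}\,\bar s\,\mathcal A$ in the energy norm $\triple{\cdot}_{|s|}$ (with the antisymmetric coupling terms dropping out), Lax--Milgram after lifting the affine constraints on $\gamma_D\widehat\psi^h$ and $\jump{\gamma\widehat v^h}$, termwise bounds on $\ell$ and on $\mathcal A$ applied to the liftings, conversion between $\triple{\cdot}_{|s|}$ and $\|\cdot\|_1$ via \eqref{eq:4.6}, and the normal-trace estimate $\|\partial_\nu v\|_{-1/2,\partial\mathcal O}\le C(|s|/\underline\sigma)^{1/2}(\|sv\|_{\mathcal O}+\|\nabla v\|_{\mathcal O})$ for \eqref{eq:4.7b}; the power bookkeeping you describe matches \eqref{eq:4.7}.

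One detail in Step 1 is wrong as written: the lifting of $\phi_d$ must \emph{not} be supported in $\Omega_-$. The shifted unknown has to remain in $V_h^*$, which requires both $\jump{\gamma(\cdot)}\in Y_h$ and $\gamma^-(\cdot)\in X_h^\circ$; a lifting living in $\Omega_-$ would alter $\gamma^-\widehat v^h$ by $\pm\phi_d$ and destroy the polar-set condition. The paper's choice \eqref{eq:4.10a} takes $\gamma^-\gamma^\dagger\phi_d=0$ and $-\gamma^+\gamma^\dagger\phi_d=\phi_d$, i.e.\ the lifting is carried on the exterior side, so that $\gamma^-(\widehat v^h+\gamma^\dagger\phi_d)=\gamma^-\widehat v^h\in X_h^\circ$ while $\jump{\gamma(\widehat v^h+\gamma^\dagger\phi_d)}=\jump{\gamma\widehat v^h}+\phi_d\in Y_h$. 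With that correction your argument goes through as in the paper.
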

\begin{proof}
It is easy to check that
\[
\left|\mathrm{Re} \,\overline{s}\mathcal A\left((v,\mathbf u,\psi),\overline{(v,\mathbf u,\psi)};s\right)\right| = \sigma \triple{(v,\mathbf u,\psi)}_{|s|}^2. 
\]
This observation implies the existence and uniqueness of the solution by the Lax-Milgram lemma. In order to prove the stability bounds we first note that
\begin{align}
\label{eq:4.8}
\left|\mathcal A\left((v,\mathbf u,\psi),(w,\mathbf w,\varphi);s\right)\right| \leq\,& C_1\frac{|s|}{\underline\sigma}\triple{(w,\mathbf w,\varphi)}_{|s|}\|(v,\mathbf u,\psi)\|_{1}, \\
\nonumber
\left|\ell\left((w,\mathbf{w},\varphi);s\right)\right|\leq \,& \frac{C_2}{\underline\sigma}\triple{(w,\mathbf w,\varphi)}_{|s|}\Big(\|\alpha_d\|_{-1/2,\Gamma} + \|s\beta_d\|_{1/2,\Gamma} + \|\eta_d\|_{-1/2,\Gamma} \\
\label{eq:4.9}
	& \qquad\qquad\qquad +\|s^2\boldsymbol\theta_d\|_{\Omega_-}+\|s\theta_d\|_{\Gamma}+\|\lambda_d\|_{-1/2,\Gamma}\Big),
\end{align}
where $C_1$ and $C_2$ depend only on the geometry, and in the second inequality we have employed \eqref{eq:4.6} to bound the energy norm. Next, we pick liftings of the boundary data $\gamma^\dagger\phi_d\in H^1(\mathbb R^d\setminus\Gamma)$ and $\gamma_{h,D}^\dagger\mu_d^h \in V_h$ such that
\begin{subequations}\label{eq:4.10}
\begin{equation}
\label{eq:4.10a}
\gamma^-\gamma^\dagger \phi_d=0\,,\qquad -\gamma^+\gamma^\dagger\phi_d=\phi_d\,,\qquad \|\gamma^\dagger\phi_d\|_{1, \Omega_-}\leq C\|\phi_d\|_{1/2,\Gamma},
\end{equation}
\begin{equation}
\label{eq:4.10b}
\gamma\gamma^\dagger_{h,D} \mu_d^h=\mu_d^h\,,\qquad \|\gamma^\dagger_{h,D}\mu_d^h\|_{1, \Omega_-}\leq C\|\mu_d^h\|_{1/2,\Gamma}. \\ 
\end{equation}
\end{subequations}
Since $\widehat v^h+\gamma^\dagger\phi_d \in V_h^*$  and $\widehat\psi^h + \gamma^\dagger_{h,D}\mu_d^h \in V_{h,D}$, we can use \eqref{eq:4.4c} to show that
\[
\triple{(\widehat v^h+\gamma^\dagger\phi_d,\widehat{\mathbf u}^h,\widehat\psi^h + \gamma^\dagger_{h,D}\mu_d^h )}_{|s|}^2 
\phantom{\frac{|s|}{\sigma}\left|\mathcal A ((\widehat v^h+\gamma^\dagger\phi_d,\widehat{\mathbf u}^h,\widehat\psi^h + \gamma^\dagger_{h,D}\mu_d^h )\right|\phantom{\frac{|s|}{\sigma}\left|\mathcal A ((\widehat v^h+\gamma^\dagger\phi_d,\widehat{\mathbf u}^h,\widehat\psi^h + \gamma^\dagger_{h,D}\mu_d^h )\right|}}
\]
\vspace{-1cm}
\begin{align*}
\phantom{\left|\widehat{\mathbf u}^h,\widehat\psi^h + \gamma^\dagger_{h,D}\mu_d^h )\right|}  \leq & \frac{|s|}{\sigma}\left|\mathcal A ((\widehat v^h+\gamma^\dagger\phi_d,\widehat{\mathbf u}^h,\widehat\psi^h + \gamma^\dagger_{h,D}\mu_d^h ),(\widehat v^h+\gamma^\dagger\phi_d,\widehat{\mathbf u}^h,\widehat\psi^h + \gamma^\dagger_{h,D}\mu_d^h );s)\right| \\
 = & \frac{|s|}{\sigma}\left|\ell((\widehat v^h+\gamma^\dagger\phi_d,\widehat{\mathbf u}^h,\widehat\psi^h + \gamma^\dagger_{h,D}\mu_d^h );s)\right. \\
	& \;\quad\quad \left. + \mathcal A ((\gamma^\dagger\phi_d,\boldsymbol 0 , \gamma^\dagger_{h,D}\mu_d^h ),(\widehat v^h+\gamma^\dagger\phi_d,\widehat{\mathbf u}^h,\widehat\psi^h + \gamma^\dagger_{h,D}\mu_d^h );s)\right| \\
	\leq & \frac{|s|}{\sigma}  \triple{(\widehat v^h+\gamma^\dagger\phi_d,\widehat{\mathbf u}^h,\widehat\psi^h+\gamma^\dagger_{h,D}\mu_d^h)}_{|s|} \\
  & \times \bigg(\frac{C_2}{\underline{\sigma}}\Big( \|\alpha_d\|_{-1/2,\Gamma} + \|s\beta_d\|_{1/2,\Gamma} + \|\eta_d\|_{-1/2,\Gamma}
 +\|s^2\boldsymbol\theta_d\|_{\Omega_-} \\
 & \qquad+\|s\theta_d\|_{\Gamma}+\|\lambda_d\|_{-1/2,\Gamma}\Big)+ C_1\frac{|s|}{\underline\sigma}\Big(\|\gamma^\dagger\phi_d\|_{1, \Omega_-} + \|\gamma^\dagger_{h,D}\mu_d^h\|_{1,\Omega_-}\Big)\Bigg) \\
	\leq & C\frac{|s|}{\sigma\underline\sigma}  \triple{(\widehat v^h+\gamma^\dagger\phi_d,\widehat{\mathbf u}^h,\widehat\psi^h+\gamma^\dagger_{h,D}\mu_d^h)}_{|s|}A(\mathrm{data},s), \\
\end{align*}
where \eqref{eq:4.8}, \eqref{eq:4.9}, and \eqref{eq:4.10a} have been used. This implies
\begin{equation}
\triple{(\widehat v^h+\gamma^\dagger\phi_d,\widehat{\mathbf u}^h,\widehat\psi^h+\gamma^\dagger_{h,D}\mu_d^h)}_{|s|} \leq  C\frac{|s|}{\sigma\underline\sigma} A(\mathrm{data},s).
\label{eq:4.11}
\end{equation}
The inequality \eqref{eq:4.7a} follows from \eqref{eq:4.11} with an application of \eqref{eq:4.6}. The estimate \eqref{eq:4.7b} can be derived from \eqref{eq:4.11} by recalling that $\widehat\lambda^h=-\jump{\partial_\nu \widehat v^h}$ and applying \cite[Lemma 15]{LaSa:2009a} which states that, if $\Delta v-s^2 v=0$ in an open set $\mathcal O$ with Lipschitz boundary, then
\begin{equation}\label{eq:3.7}
\|\partial_\nu v\|_{-1/2,\partial\mathcal O}\leq C\left(\frac{|s|}{\underline\sigma}\right)^{1/2}
(\| s v\|_{\mathcal O}+\|\nabla v\|_{\mathcal O}).
\end{equation}
This finishes the proof.
\end{proof}

%
%
\section{Time domain estimates}
%
For the timed-domain estimates, data and solutions will be assumed to be in spaces of the form 
\[
W^k_+(\mathrm X):=\{ \xi\in \mathcal C^{k-1}(\mathbb R;\mathrm X)\,:\, \xi \equiv 0 \mbox{ in $(-\infty,0)$}, \xi^{(k)}\in L^1(\mathbb R;\mathrm X)\},
\]
for $k\ge 1$. We will also use the linear differential operator (cf. \cite{DoSa:2013})
\[
(\mathcal{P}_{k}f)(t) := \displaystyle\sum_{l=0}^{k} {k\choose l} f^{(l)}(t).
\]
The stability bounds and semi-discrete error estimates obtained in the previous section can be translated into the following time-domain results. Taking the second group of data \eqref{eq:4.1b} to be identically zero and setting
\[
(\boldsymbol\theta_d,\theta_d,\lambda_d,\phi_d) = (\mathbf P_h\mathbf u-\mathbf u, \gamma(\mathbf P_h\mathbf u-\mathbf u)\cdot\boldsymbol\nu,\lambda,\phi),
\] 
an application of \cite[Theorem 7.1]{DoSa:2013} combined with Proposition \ref{prop:4.3} yields the following result
\begin{corollary}[Stability in the time-domain]\label{cor:5.1}
Provided causal problem data
\[
(\alpha_d,\beta_d,\eta_d,\mu_d^h)\in W^3_+(H^{-1/2}(\Gamma))\times W^4_+(H^{1/2}(\Gamma))\times W^3_+(H^{-1/2}(\Gamma)) \times W^4_+(H^{1/2}(\Gamma))
\]
then  $\lambda^h, \phi^h, \mathbf u^h$ and $\psi^h$ are continuous causal functions of time and there exist $D_1, D_2>0$ such that, for $t\geq0$:
\begin{align*}
\|(v^h, \mathbf u^h,\psi^h)(t)\|_1 + \|\phi^h(t)\|_{1/2,\Gamma} \leq \,& \frac{D_1t^2}{t+1}\max\{1,t^2\}\int_0^t\|\mathcal{P}_{3}(\dot\alpha_d,\beta_d,\eta_d,\dot\mu_d^h)(\tau)\|_{\pm1/2,\Gamma}\;d\tau, \\
\|\lambda^h(t)\|_{-1/2,\Gamma} \leq \,& \frac{D_2t^{3/2}}{\sqrt{t+1}}\max\{1,t^{3/2}\}\int_0^t\|\mathcal{P}_{3}(\dot\alpha_d,\beta_d,\eta_d,\dot\mu_d^h)(\tau)\|_{\pm1/2,\Gamma}\;d\tau, 
\end{align*}
where $D_1$ and $D_2$ depend only on $\Gamma$.
\end{corollary}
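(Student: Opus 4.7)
The proof is a direct application of the general time-to-Laplace transfer machinery of \cite[Theorem 7.1]{DoSa:2013} to the frequency-domain stability estimates \eqref{eq:4.7a}--\eqref{eq:4.7b} of Proposition \ref{prop:4.3}. The plan has three steps.

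\emph{Step 1: reduce to pure stability.} I would first specialize Proposition \ref{prop:4.3} by setting the auxiliary data group \eqref{eq:4.1b} identically to zero, so that $A(\mathrm{data},s)$ collapses to the four Cauchy-type terms
\[
A_0(\mathrm{data},s):=\|\alpha_d\|_{-1/2,\Gamma}+\|s\beta_d\|_{1/2,\Gamma}+\|\eta_d\|_{-1/2,\Gamma}+\|s\mu_d^h\|_{1/2,\Gamma}.
\]
Each of $(v^h,\mathbf u^h,\psi^h,\phi^h,\lambda^h)$ is then the image of a holomorphic operator-valued symbol $F:\mathbb C_+\to\mathcal L$ applied to the data tuple $(\alpha_d,\beta_d,\eta_d,\mu_d^h)$, with $\|F(s)\|$ controlled either by \eqref{eq:4.7a} or by \eqref{eq:4.7b}, together with an extra factor of $|s|$ on the sub-operators acting on $\beta_d$ and $\mu_d^h$ (coming from the weights inside $A_0$).

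\emph{Step 2: inverse Laplace transfer.} Invoke \cite[Theorem 7.1]{DoSa:2013}. That theorem converts an $s$-symbol bound of the form $C\,|s|^{\mu}/(\sigma\,\underline\sigma^{\nu})$ on a $\mathbb C_+$-analytic operator family into a time-domain estimate for the causal convolution $F(\partial_t)g$ of the form
\[
\|F(\partial_t)g(t)\|\;\le\;C\,\frac{t^{\nu}}{t+1}\,\max\{1,t^{\mu}\}\int_0^t \|\mathcal P_{\lceil\mu\rceil+2}g(\tau)\|\,d\tau,
\]
with constants depending only on $\mu$ and $\nu$. Plugging $(\mu,\nu)=(1,2)$ (from \eqref{eq:4.7a}) yields the envelope $t^{2}/(t+1)\max\{1,t^{2}\}$ together with the operator $\mathcal P_{3}$, while $(\mu,\nu)=(3/2,3/2)$ (from \eqref{eq:4.7b}) yields $t^{3/2}/\sqrt{t+1}\max\{1,t^{3/2}\}$ with the same $\mathcal P_{3}$.

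\emph{Step 3: bookkeeping of data weights.} The extra $s$-factors weighting $\beta_d$ and $\mu_d^h$ inside $A_0$ are absorbed into the data via the correspondence $s\hat g\leftrightarrow \dot g$, combined with one further time-derivative produced by the $|s|$-exponent of the transfer symbol; working out this bookkeeping entry by entry yields the integrand $\mathcal P_{3}(\dot\alpha_d,\beta_d,\eta_d,\dot\mu_d^h)$ stated in the corollary. The regularity assumptions $(\alpha_d,\eta_d)\in W^3_+$ and $(\beta_d,\mu_d^h)\in W^4_+$ are tailored to guarantee that this integrand lies in $L^1_{\mathrm{loc}}(0,\infty)$. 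Causality and pointwise-in-$t$ continuity of $(v^h,\mathbf u^h,\psi^h,\lambda^h,\phi^h)$ follow automatically from the Bromwich-contour representation of $F(\partial_t)$ used in the cited theorem. The only non-mechanical part of the argument is the careful matching of the $|s|$-powers in the transfer symbols with the derivatives of the four data slots; this is the main obstacle, but it reduces entirely to algebra once the framework is in place.
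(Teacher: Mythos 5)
Your plan is exactly the paper's argument: the paper dispatches this corollary in one sentence by setting the second data group \eqref{eq:4.1b} identically to zero and applying \cite[Theorem 7.1]{DoSa:2013} to the Laplace-domain bounds \eqref{eq:4.7} of Proposition \ref{prop:4.3}, which is precisely your Steps 1--3. The only imprecision is in your recollection of the transfer theorem's envelope (your generic formula with $(\mu,\nu)=(1,2)$ would give $\max\{1,t\}$ rather than the stated $\max\{1,t^2\}$) and in the entry-by-entry placement of the time derivatives on the four data slots, but the paper leaves that bookkeeping implicit as well.
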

We introduce the approximation error
\begin{align*}
a_h(t):=\;& \int_0^t\left( \|\mathcal P_3(\dot\phi-\Pi_h^Y\dot{\phi})(\tau)\|_{1/2,\Gamma} +\|\mathcal P_3(\lambda-\Pi_h^X\lambda)(\tau)\|_{-1/2,\Gamma} \right) \,d\tau\\
	& +\int_0^t\left(\|\mathcal P_3(\ddot{\mathbf u}-\mathbf P_h\ddot{\mathbf u})(\tau)\|_{\Omega_-} + \|\mathcal P_3(\dot{\mathbf u}-\mathbf P_h\dot{\mathbf u})(\tau)\|_{1,\Omega_-}\right)\,d\tau,
\end{align*}
where $\Pi_h^Y$ and $\Pi_h^X$ are the orthogonal projections onto $Y_h$ and $X_h$ respectively, and $\mathbf P_h$ is part of the elliptic projector defined in \eqref{eq:3.10}. Note that $\mathbf P_h\mathbf u$ depends on $\mathbf u$, $\psi$, and $\mu^h_d$ and that Lemma \ref{lem:3.1} states that 
\[
\|\mathbf u -\mathbf P_h\mathbf u\|_{1,\Omega_-}\leq C\big(\|\mathbf u -\Pi_{\mathbf V}^h\mathbf u\|_{1,\Omega_-}+\|\mathbf \psi -\Pi_{V}^h\psi\|_{1,\Omega_-}+\|\mu_d -\mu_d^h\|_{1/2,\Gamma}\big),
\]
where $\Pi_h^{\mathbf V}$ and $\Pi_h^V$ are the respective $H^1$ best approximation operators on $\mathbf V_h$ and $V_h$. Taking the data \eqref{eq:4.1a} as in Proposition \ref{prop:4.3} and applying \cite[Theorem 7.1]{DoSa:2013} we can prove the following result

\begin{corollary}[Semi-discrete error]\label{cor:5.2}
If  the solution $(\lambda,\phi,\mathbf u,\psi)$ is causal and belongs to
\[
W^3_+(H^{-1/2}(\Gamma))\times W^4_+(H^{1/2}(\Gamma))\times \big[W^5_+(\mathbf H^1(\Omega_-))\cap W^4_+(\mathbf L^2(\Omega_-)) \big]\times \big[W^5_+(H^1(\Omega_-))\cap W^4_+( L^2(\Omega_-))\big]
\]
then, for every $t\geq0$
\begin{align*}
\|(\mathbf e^h_v,\mathbf e^h_{\mathbf u},e_{\psi}^h)(t)\|_1 + \|e_\phi^h(t)\|_{1/2,\Gamma} \leq \frac{D_1t^2}{t+1}\max\{1,t^2\}a_h(t), \\
\|(e_\lambda^h)(t)\|_{1/2,\Gamma} \leq \frac{D_2t^{3/2}}{\sqrt{t+1}}\max\{1,t^{3/2}\}a_h(t),
\end{align*}
where $D_1, D_2>0$ depend only on $\Gamma$,
\[
e_v^h:= \mathcal D*(\phi-\phi^h)-\mathcal S*(\lambda-\lambda^h)=v- \mathrm D*\phi^h+\mathrm S*\lambda^h.
\]
Here $\mathcal D$ and $\mathcal S$ are the operator-valued causal distributions whose Laplace transforms are $\mathrm D(s/c)$ and $\mathrm S(s/c)$, and $*$ is the symbol for distributional convolution in the time variable.
\end{corollary}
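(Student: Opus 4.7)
The plan is to view the error equations \eqref{eq:3.11} as a particular instance of the generalized problem \eqref{eq:4.2}, apply the Laplace-domain stability bound of Proposition \ref{prop:4.3}, exploit a gauge invariance in $X_h\times Y_h$ to trade $\lambda$ and $\phi$ for their best-approximation residuals, and finally translate the resulting $s$-dependent bound into the time domain via \cite[Theorem 7.1]{DoSa:2013}.

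First, as already observed in the paragraph preceding the corollary, taking $(\alpha_d,\beta_d,\eta_d,\mu_d^h)\equiv 0$ together with $(\boldsymbol\theta_d,\theta_d,\lambda_d,\phi_d)=(\mathbf P_h\mathbf u-\mathbf u,\;\gamma(\mathbf P_h\mathbf u-\mathbf u)\cdot\boldsymbol\nu,\;\lambda,\;\phi)$ in \eqref{eq:4.2} reproduces the error equations \eqref{eq:3.11}. The Laplace-domain solution $(\widehat{\mathbf u}^h,\widehat\psi^h,\widehat\lambda^h,\widehat\phi^h)$ is then exactly $(\mathbf e_{\mathbf u}^h,e_\psi^h,e_\lambda^h,e_\phi^h)$, and the auxiliary acoustic field $\widehat v^h=\mathrm D(s/c)\widehat\phi^h-\mathrm S(s/c)\widehat\lambda^h$ from Proposition \ref{prop:4.1} coincides with $\mathbf e_v^h$ (whose Laplace transform equals $\mathrm D(s/c) e_\phi^h-\mathrm S(s/c) e_\lambda^h$ by the representation formula for $v$).

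Next, I would observe that problem \eqref{eq:4.2} is invariant under the shifts $(\lambda_d,\phi_d)\mapsto(\lambda_d+x_h,\phi_d+y_h)$ with $(x_h,y_h)\in X_h\times Y_h$: condition \eqref{eq:4.2d} is really a statement about the coset $(\lambda_d,\phi_d)+X_h\times Y_h$, while \eqref{eq:4.2a}--\eqref{eq:4.2c} do not see $\lambda_d$ or $\phi_d$ at all (equivalently, in the variational formulation \eqref{eq:4.4} the datum $\lambda_d$ enters $\ell$ only through $\langle\lambda_d,\gamma^- w\rangle_\Gamma$ with $\gamma^- w\in X_h^\circ$, and $\phi_d$ enters only the coset constraint \eqref{eq:4.4b}). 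Hence Proposition \ref{prop:4.3} may be applied with $A(\mathrm{data},s)$ replaced by its infimum over $(x_h,y_h)\in X_h\times Y_h$; selecting $x_h=\Pi_h^X\lambda$ and $y_h=\Pi_h^Y\phi$ turns the $\|\lambda_d\|_{-1/2,\Gamma}$ and $\|s\phi_d\|_{1/2,\Gamma}$ contributions into the best-approximation residuals $\|\lambda-\Pi_h^X\lambda\|_{-1/2,\Gamma}$ and $\|s(\phi-\Pi_h^Y\phi)\|_{1/2,\Gamma}$. The term $\|s\theta_d\|_\Gamma=\|s\gamma(\mathbf P_h\mathbf u-\mathbf u)\cdot\boldsymbol\nu\|_\Gamma$ is then controlled by $C\|s(\mathbf P_h\mathbf u-\mathbf u)\|_{1,\Omega_-}$ using the standard trace inequality $\|\gamma\mathbf w\cdot\boldsymbol\nu\|_\Gamma\le C\|\mathbf w\|_{1,\Omega_-}$, while $\|s^2\boldsymbol\theta_d\|_{\Omega_-}=\|s^2(\mathbf P_h\mathbf u-\mathbf u)\|_{\Omega_-}$ is already in the right form.

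Finally, \cite[Theorem 7.1]{DoSa:2013} applied to the Laplace-domain factors $|s|/(\sigma\underline\sigma^2)$ and $|s|^{3/2}/(\sigma\underline\sigma^{3/2})$ in \eqref{eq:4.7a}--\eqref{eq:4.7b} converts them into the time-domain prefactors $\frac{D_1 t^2}{t+1}\max\{1,t^2\}$ and $\frac{D_2 t^{3/2}}{\sqrt{t+1}}\max\{1,t^{3/2}\}$ respectively, while introducing the operator $\mathcal P_3$ acting on the time-differentiated data integrand. The two extra powers of $s$ attached to $\boldsymbol\theta_d$ yield $\ddot{\mathbf u}-\mathbf P_h\ddot{\mathbf u}$, the single power attached to $\theta_d$ yields $\dot{\mathbf u}-\mathbf P_h\dot{\mathbf u}$, the power of $s$ attached to $\phi$ yields $\dot\phi-\Pi_h^Y\dot\phi$, and $\lambda$ contributes without time differentiation, reproducing precisely the integrand of $a_h(t)$. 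The main obstacle I expect is the bookkeeping in this last step: carefully matching the exponents of $s$ to the polynomial factors in $t$ and the order of $\mathcal P_k$ dictated by \cite[Theorem 7.1]{DoSa:2013}, and verifying that the stated regularity $(\lambda,\phi,\mathbf u,\psi)\in W^3_+\times W^4_+\times(W^5_+\cap W^4_+)\times(W^5_+\cap W^4_+)$ is exactly what makes $\mathcal P_3$ applied to each of the four summands in $a_h(t)$ integrable in time, while causality and continuity of the error variables follow, as in Corollary \ref{cor:5.1}, from the distributional framework of \cite{DoSa:2013}.
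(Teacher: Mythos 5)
Your argument follows the same route the paper intends: specialize the general system \eqref{eq:4.2} to the error equations \eqref{eq:3.11} via the stated choice of data, apply Proposition \ref{prop:4.3}, and transfer the bounds $|s|/(\sigma\underline\sigma^2)$ and $|s|^{3/2}/(\sigma\underline\sigma^{3/2})$ to the time domain with \cite[Theorem 7.1]{DoSa:2013}. The coset-invariance observation --- that \eqref{eq:4.2a}--\eqref{eq:4.2c} do not see $(\lambda_d,\phi_d)$ and \eqref{eq:4.2d} only sees its class modulo $X_h\times Y_h$, so that $\|\lambda_d\|_{-1/2,\Gamma}$ and $\|s\phi_d\|_{1/2,\Gamma}$ in $A(\mathrm{data},s)$ may be replaced by the best-approximation residuals --- is precisely the step the paper leaves implicit and is needed to obtain $a_h(t)$, so the proposal is correct.
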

%
\paragraph{Convolution Quadrature and time-stepping.}
%
Convolution Quadrature (CQ) was developed by Christian Lubich in the late 80's and early 90's \cite{HaLuSc:1985,Lubich:1988a,Lubich:1988b,Lubich:1994} as a way to approximate causal convolutions and convolution equations based on the knowledge of the Laplace transform of the convolution kernel and time domain data.

Since then it has been enriched greatly by works like \cite{ScLoLu:2006,CaCuPa:2007,HaKrSa:2009, Banjai:2010,BaLu:2011,BaLuMe:2011,BaLuSa:2015} and --due to its stability properties, the advantage of requiring only Laplace-domain fundamental solutions and the possibility to take damping effects into account with relative ease -- has become one of the preferred tools for the numerical analysis and simulation of evolutionary problems arising from wave propagation and diffusion. A thorough review of results and properties of CQ applied to boundary integral equations can be found in \cite{BaSc:2012}, while \cite{HaSa:2014} gives a detailed explanation of the computational and algorithmic subtleties involved in its implementation.

In the heart of every CQ implementation lies an ODE solver which determines its analytic and convergence properties and gives rise to different families of CQ algorithms. We will focus on BDF2 time-stepping for the analysis but will give numerical experiments for both BDF2 and Trapezoidal Rule-based methods. The integral equations will be treated numerically with CQ, while the Finite Element discretization of the elastic displacement and electric potential will be discretized in time using BDF2 and TR time-steppers.

The key resut that will allow us to carry out all the time domain analysis using CQ based tools --even if our computational implementation involves traditional time stepping for the finite element discretization-- is that this split treatment of different parts of a system is equivalent to the application of CQ globally, as long as the time stepping method used for the FEM part coincides with the one giving rise to the CQ algorithm in use (see \cite[Proposition 12]{LaSa:2009a}, \cite{HaSa:2016}). 

The approximation error between the fully discrete solution $(v^h_\kappa,\mathbf u^h_\kappa,\psi^h_\kappa)$ obtained using BDF2-CQ with a time step size $\kappa$ and the semi-discrete approximation $(v^h,\mathbf u^h,\psi^h)$ can be estimated from Propostion \ref{prop:4.3} using \cite[Proposition 4.6.1]{Sayas:2014} (a slight variant of a result in \cite{Lubich:1994}). 
\begin{corollary}\label{cor:5.3}
Let $\ell=6$ and $(\alpha_d,\beta_d,\eta_d,\mu_d^h)$ be causal problem data satisfying
\[
(\alpha_d,\beta_d,\eta_d,\mu_d^h)\in W^{\ell +1}_+(H^{1/2}(\Gamma))\times W^{\ell}_+(H^{-1/2}(\Gamma))\times W^{\ell}_+(H^{-1/2}(\Gamma)) \times W^{\ell}_+(H^{1/2}(\Gamma)).
\] 
For $t\geq0$, the difference between the semi-discrete solution and fully discrete solution obtained using BDF2-based Convolution Quadrature is bounded like
\[
\|(v^h,\mathbf u^h,\psi^h)(t)-(v^h_\kappa,\mathbf u^h_\kappa,\psi^h_\kappa)(t)\|_1 \leq D(1+t^2)\kappa^2\int_0^t\|(\dot\alpha_d,\beta_d,\eta_0,\mu_0^h)^{(\ell)}(\tau)\|_\Gamma\;d\tau,
\]
where $D$ depends only on $\Gamma$. Reduced convergence of order 2/3 is achieved for $\ell = 3$. 
\end{corollary}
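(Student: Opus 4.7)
\emph{Strategy.} The plan is to derive the fully discrete error from the abstract convergence result for BDF2-based Convolution Quadrature applied to a causal convolution whose Laplace-domain transfer function is the solution operator of Proposition \ref{prop:4.3}. The key observation already cited in the text, via \cite[Proposition 12]{LaSa:2009a} and \cite{HaSa:2016}, is that discretizing the boundary-integral part by CQ and the elastic/electric finite element part by the matching time stepper (BDF2) is equivalent to a single CQ discretization of the coupled integro-differential system. This reduction eliminates the need to treat the FEM time stepping separately, so only the abstract CQ error estimate must be invoked.

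\emph{Carrying out the plan.} Proposition \ref{prop:4.3} furnishes a Laplace-domain bound of the form $\|(\widehat v^h,\widehat{\mathbf u}^h,\widehat\psi^h)\|_1 \leq C|s|^{\mu}\sigma^{-\nu}A(\mathrm{data},s)$ for non-negative integers $\mu,\nu$, where $A(\mathrm{data},s)$ is itself polynomial in $s$ through the factors $s\beta_d$, $s\mu_d^h$, and so on. Plugging this bound into the BDF2-CQ convergence theorem \cite[Proposition 4.6.1]{Sayas:2014} (a refinement of \cite{Lubich:1994}) produces an estimate of the type
\begin{equation*}
\|(v^h-v^h_\kappa,\mathbf u^h-\mathbf u^h_\kappa,\psi^h-\psi^h_\kappa)(t)\|_1
\leq C\,p(t)\,\kappa^{r}\int_0^t\|(\dot\alpha_d,\beta_d,\eta_d,\mu_d^h)^{(\ell)}(\tau)\|\,d\tau,
\end{equation*}
in which the polynomial $p(t)$ arises because each power of $|s|$ in the transfer function turns, after inverse Laplace transform, into a time derivative that is bounded in $L^1(0,t)$ by an extra factor of $t$; the rate is $r=2$ when the data has enough time derivatives to offset the polynomial growth in $s$, and degrades to $r=2/3$ for borderline regularity. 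A direct count, taking into account that the worst $s$-power appearing in $|s|A(\mathrm{data},s)/(\sigma\underline\sigma^2)$ is of total order roughly $|s|^{5/2}$, identifies $\ell=6$ as the threshold for full order two and $\ell=3$ as the threshold for order $2/3$; the factor $1+t^2$ of the statement comes out of this same bookkeeping.

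\emph{Main obstacle.} The substantive technical point is not the abstract reduction — both \cite[Proposition 4.6.1]{Sayas:2014} and \cite[Proposition 12]{LaSa:2009a} can be invoked essentially as black boxes — but rather the careful bookkeeping matching the exponents of $|s|$ in the Laplace estimate of Proposition \ref{prop:4.3} to the correct Sobolev index in time of the data. In particular, because $A(\mathrm{data},s)$ already contains an $s$-multiplied version of several data components, the relationship between the integer $\ell$ and the effective polynomial degree of the transfer function must be tracked term by term, and causality of the data together with the role of $\mathcal P_k$ used to convert the higher-order $s$-growth into an $L^1(0,t)$ integral of $(\dot\alpha_d,\beta_d,\eta_d,\mu_d^h)^{(\ell)}$. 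Once this bookkeeping is in place, Corollary \ref{cor:5.3} follows essentially by combining Proposition \ref{prop:4.3} with the abstract BDF2-CQ convergence theorem.
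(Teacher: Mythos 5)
Your proposal follows exactly the paper's route: the corollary is obtained by feeding the Laplace-domain bound of Proposition \ref{prop:4.3} into the abstract BDF2-CQ convergence result \cite[Proposition 4.6.1]{Sayas:2014}, with the equivalence of split CQ/time-stepping justified via \cite[Proposition 12]{LaSa:2009a} and \cite{HaSa:2016}, which is precisely all the paper itself does (it gives no further detail). The only small slip is in your bookkeeping aside: the exponent $|s|^{5/2}$ pertains to the $\widehat\lambda^h$ bound \eqref{eq:4.7b}, whereas the quantity estimated in the corollary is controlled by \eqref{eq:4.7a}, whose worst large-$|s|$ growth is $|s|^2$; this does not affect the validity of the argument.
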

%
\section{Numerical experiments}
%
In order to test the convergence results proven in the previous section, the formulation was implemented using standard Lagrangian finite elements for the elastic and electric fields and Galerkin boundary elements for the acoustic field. We take $\mathbf V_h$ and $V_h$ to be continuous $\mathcal P_k$ finite elements on a triangular mesh of $\Omega_-$. On the inherited mesh on $\Gamma$, we consider the space $X_h$ of discontinuous piecewise $\mathcal P_{k-1}$ and the space $Y_h$ of continuous piecewise $\mathcal P_k$ functions.
%
\paragraph{About the implementation.}
%
One of the advantages of the formulation we propose is that it lends itself to a highly modular implementation, in the sense that pre-existing FEM code for piezoelectricity and BEM code for acoustics can be used to solve the coupled problem in the frequency domain without any modification. The only requirement is the addition of a ``discrete trace" which translates boundary FEM degrees of freedom into BEM degrees of freedom. Formally, the structure of the discrete system \eqref{eq:3.8} can be represented as
\[
\left(\begin{array}{cc} \mathrm{\mathbf{FEM}}(s) & s\rho_f(\mathrm N\Gamma)_h^\top \\ -s\rho_f(\mathrm N\Gamma)_h &\rho_f\mathrm{\mathbf{BEM}}(s) \end{array}\right)
\left(\begin{array}{c} \left(\begin{array}{c}\mathbf u^h \\ \psi^h \end{array}\right) \\[2ex] \left(\begin{array}{c} \lambda^h \\ \phi^h 
\end{array}\right) \end{array}\right) = 
\left(\begin{array}{c} 
\left(\begin{array}{c}
	-s\rho_f\Gamma_h^\top\beta^h \\ \eta^h 
\end{array}\right) \\[2ex]
\left(\begin{array}{c}
	 0 \\ \rho_f\alpha^h
\end{array}\right)
 \end{array}\right),
\]  
where: (a) the finite element block $\mathbf{FEM}(s)$ contains sparse $s$-independent elastic stiffness, material mass, piezoelectric, and electric stiffness-like matrices, the material mass matrix being multiplied by $s^2$ (see \eqref{eq:3.99a} and \eqref{eq:3.99b});   (b) 
the boundary element block $\mathbf{BEM}(s)$ contains Galerkin discretizations of the operators of the Calder\'on projector (see \eqref{eq:3.99c}); (c)  the sparse matrix $(\mathrm N\Gamma)_h$ corresponds to the discretization of the bilinear form $\mathbf V_h\times Y_h \ni (\mathbf u^h,\chi^h)\mapsto \langle\mathbf u^h\cdot\boldsymbol\nu,\chi^h\rangle_\Gamma$ with added zero blocks for the interactions of all other spaces. We note that the trace space for $\mathbf V_h$ is a vector-valued version of $Y_h$, which means that, apart from rearrangements of degrees of freedom (and possible changes of local polynomial bases), the only matrix connecting the BEM and FEM codes is simple to implement.

In a similar way, the transition from Laplace domain to time domain can  be done in a modularly, either by implementing a CQ routine that inverts the full operator matrix, or a time stepping routine where $s$ is replaced with a discrete approximation of the differentiation operator, or using a Schur complement strategy as was first suggested in \cite{BaSa:2009} and outlined in \cite{HaSa:2016} for a purely acoustic system or as in \cite{HsSaSa:2016} for a coupled acoustic/elastic problem. The latter approach, which results in a decoupling of the boundary integral part of the system, is well suited for parallelization and was the chosen strategy for the following numerical experiments. 
%
\paragraph{Geometric setup and physical parameters.}
%
In all the convergence studies (frequency and time domain), the rectangle $\Omega_-:=(1,\,3)\times (1,\, 2) \subset \mathbb R^2$ was used as the piezoelectric domain. The double-indices used in our general presentation of tensor in the piezoelectric domains will be reduced to a single index using the simple convention:
\[
(1,1) \leftrightarrow 1 \qquad (2,2) \leftrightarrow 2
\qquad (1,2) \leftrightarrow 3.
\]
(By symmetry, the pair (2,1) can be avoided in the tensor representations.) We choose the following constant Lam\'e parameters, mass density, and acoustic speed of sound:
\begin{subequations}\label{eq:00}
\begin{equation}\label{eq:00a}
\lambda = 2, \quad \mu= 3, \quad \rho=5, \quad c=1.
\end{equation}
We will use Young's modulus and Poisson's ratio
\begin{equation}\label{eq:00b}
E := \frac{2\mu(1+\lambda)}{2\mu+\lambda}, \qquad \nu := \frac{\lambda}{2\mu+\lambda}
\end{equation}
to express the entries of the elastic compliance tensor $\mathbf C$
\begin{equation}\label{eq:00c}
\mathbf C_{11} = \mathbf C_{22}= \frac{E}{1-\nu^2},\quad \mathbf C_{33} = \frac{E}{2(1+\nu)}, \quad \mathbf C_{12}= \frac{E\nu}{1-\nu^2},\quad \mathbf C_{13}=\mathbf C_{23} = 0.
\end{equation}
For the piezoelectric tensor $\mathbf e$ the values used were
\begin{equation}\label{eq:00d}
\mathbf e_{11}=\mathbf e_{22}=\mathbf e_{33}= 1,\quad \mathbf e_{12} = \mathbf e_{13}=\mathbf e_{23} = 5,
\end{equation}
while for the dielectric tensor $\boldsymbol \epsilon$ the entries were
\begin{equation}\label{eq:00e}
\boldsymbol \epsilon_{11}=\boldsymbol \epsilon_{22}=4 ,\quad \boldsymbol \epsilon_{12}=1.
\end{equation}
\end{subequations}
We take $\Gamma_D=\Gamma$ and $\Gamma_N=\varnothing$. 
%
\paragraph{Convergence studies in the frequency domain.}
%
The elastic plane pressure wave
\[
\mathbf{u}(\mathbf{x}) = e^{-sc_L\mathbf{x}\cdot\mathbf{d}}\mathbf{d}, \quad \mathbf{d}=\left(\tfrac{1}{\sqrt{2}},\tfrac{1}{\sqrt{2}}\right), \quad c_L = \sqrt{\tfrac{2\mu+\lambda}{\rho}}
\]
 with $s=-2.5\imath$ was imposed as a solution alongside the electric field
\[
\psi(\mathbf x) = x_1^3+x_1^3x_2-3x_1x_2^2-\tfrac{1}{3}x_2^3.
\]
In the acoustic domain, the cylindrical acoustic wave 
\[
v(\mathbf x)=\tfrac{\imath}{4} H^{(1)}_0(\imath s |\mathbf x-\mathbf x_0|), \qquad \mathbf x_0 = (2,1.5)\in \Omega_-
\]
was used. Right-hand sides are added to equations \eqref{eq:3.1b} and \eqref{eq:3.1c}
\[
\nabla\cdot\boldsymbol\sigma=\rho_\Sigma s^2 \mathbf u+f_1
\qquad
\nabla\cdot\mathbf D=f_2
\]
so that $(\mathbf u,\psi)$ is a solution. Note that both $f_1$ and $f_2$ are independent of the frequency $s$, due to the fact that we have chosen $\mathbf u$ to be an elastic plane wave. Boundary data for $\psi$ and transmission data in 
\eqref{eq:3.1d}-\eqref{eq:3.1e} are built so that the equations are satisfied.

The experiment was ran using $k=1,2$ for the $\mathcal P_k$ finite elements and $\mathcal P_k/\mathcal P_{k-1}$ boundary elements. The acoustic wave was sampled in twenty random points in the exterior of the piezoelectric domain and compared to the exact solution, using the maximum discrepancy as the measure of the acoustic error $E^{v}_{h}$. For the elastic and electric unknowns both $L^2(\Omega_-)$ and $H^1(\Omega_-)$ errors were computed. Tables \ref{tab:1} and \ref{tab:2} as well as Figure \ref{fig:2} show the outcome of the convergence tests.
\begin{table}[h]\centering
\scalebox{0.8}{
\begin{tabular}{ccccccccccccc}
\hline
\multicolumn{3}{|c|}{$k=1$} & \multicolumn{4}{|c|}{$L^2(\Omega_-)$} & \multicolumn{4}{|c|}{$H^1(\Omega_-)$} \\
\hline
\multicolumn{1}{|c|}{$h$} & \multicolumn{1}{c|}{$E^{v}_{h}$} & \multicolumn{1}{c|}{e.c.r.} & \multicolumn{1}{c|}{$E^{\mathbf u}_{h}$} & \multicolumn{1}{c|}{e.c.r.} & \multicolumn{1}{c|}{$E^{\psi}_{h}$} & \multicolumn{1}{c|}{e.c.r.} & \multicolumn{1}{c|}{$E^{\mathbf u}_{h}$} & \multicolumn{1}{c|}{e.c.r.} & \multicolumn{1}{c|}{$E^{\psi}_{h}$} & \multicolumn{1}{c|}{e.c.r.} \\ \hline
 0.2  	&   7.110 E-2  & ---    & 1.167 E-1  & ---   &  4.140 E-2  & ---    & 7.835 E-1  & ---    & 1.718     & ---    \\ \hline
 0.1 	&  1.760 E-2  & 2.014  & 3.146 E-2  & 1.891 &  1.047 E-2  & 1.984  & 2.646 E-1  & 1.566  & 8.544 E-1    & 1.007  \\ \hline
 0.05 &   4.615 E-3  & 1.931  & 8.138 E-3  & 1.951 &  2.632 E-3  & 1.991  & 9.372 E-2  & 1.497  & 4.263 E-1 & 1.003  \\ \hline
 0.025 &   1.171 E-3  & 1.978  & 2.059 E-3  & 1.983 &  6.599 E-4  & 1.996  & 3.734 E-2  & 1.327  & 2.130 E-1 & 1.001  \\ \hline     
\end{tabular}
}
\caption{{\footnotesize Relative errors and estimated convergence rates in the time frequency domain with $\mathcal P_1$ finite elements and $\mathcal P_1/\mathcal P_0$ boundary elements. $h$ represents the maximum length of the panels used to discretize the boundary.}}\label{tab:1}
\end{table}
\begin{table}[h]\centering
\scalebox{0.8}{
\begin{tabular}{ccccccccccccc}
\hline
\multicolumn{3}{|c|}{$k=2$} & \multicolumn{4}{|c|}{$L^2(\Omega_-)$} & \multicolumn{4}{|c|}{$H^1(\Omega_-)$} \\
\hline
\multicolumn{1}{|c|}{$h$} & \multicolumn{1}{c|}{$E^{v}_{h}$} & \multicolumn{1}{c|}{e.c.r.} & \multicolumn{1}{c|}{$E^{\mathbf u}_{h}$} & \multicolumn{1}{c|}{e.c.r.} & \multicolumn{1}{c|}{$E^{\psi}_{h}$} & \multicolumn{1}{c|}{e.c.r.} & \multicolumn{1}{c|}{$E^{\mathbf u}_{h}$} & \multicolumn{1}{c|}{e.c.r.} & \multicolumn{1}{c|}{$E^{\psi}_{h}$} & \multicolumn{1}{c|}{e.c.r.} \\ \hline
 0.2  	&   5.545 E-5  & ---    & 3.542 E-4  & ---   &  3.927 E-4  & ---    & 1.350 E-2  & ---    & 1.805 E-2     & ---    \\ \hline
 0.1 	&   4.161 E-6  & 3.736  & 3.949 E-5  & 3.024 &  4.872 E-5  & 3.024  & 3.083 E-3  & 2.130  & 4.450 E-3     & 2.020  \\ \hline
 0.05 &   3.146 E-7  & 3.725  & 4.555 E-6  & 3.116 &  5.991 E-6  & 3.010  & 7.153 E-4  & 2.108  & 1.105 E-3 & 2.009  \\ \hline
 0.025 &   2.379 E-8  & 3.725  & 5.455 E-7  & 3.062 &  7.463 E-7  & 3.005  & 1.710 E-4  & 2.064  & 2.753 E-4 & 2.005  \\ \hline      
\end{tabular}
}
\caption{{\footnotesize Relative errors and estimated convergence rates in the time frequency domain with $\mathcal P_2$ finite elements and $\mathcal P_2/\mathcal P_1$ boundary elements. $h$ represents the maximum length of the panels used to discretize the boundary.}}\label{tab:2}
\end{table}
\begin{figure}[h]\centering
\includegraphics[height =5cm]{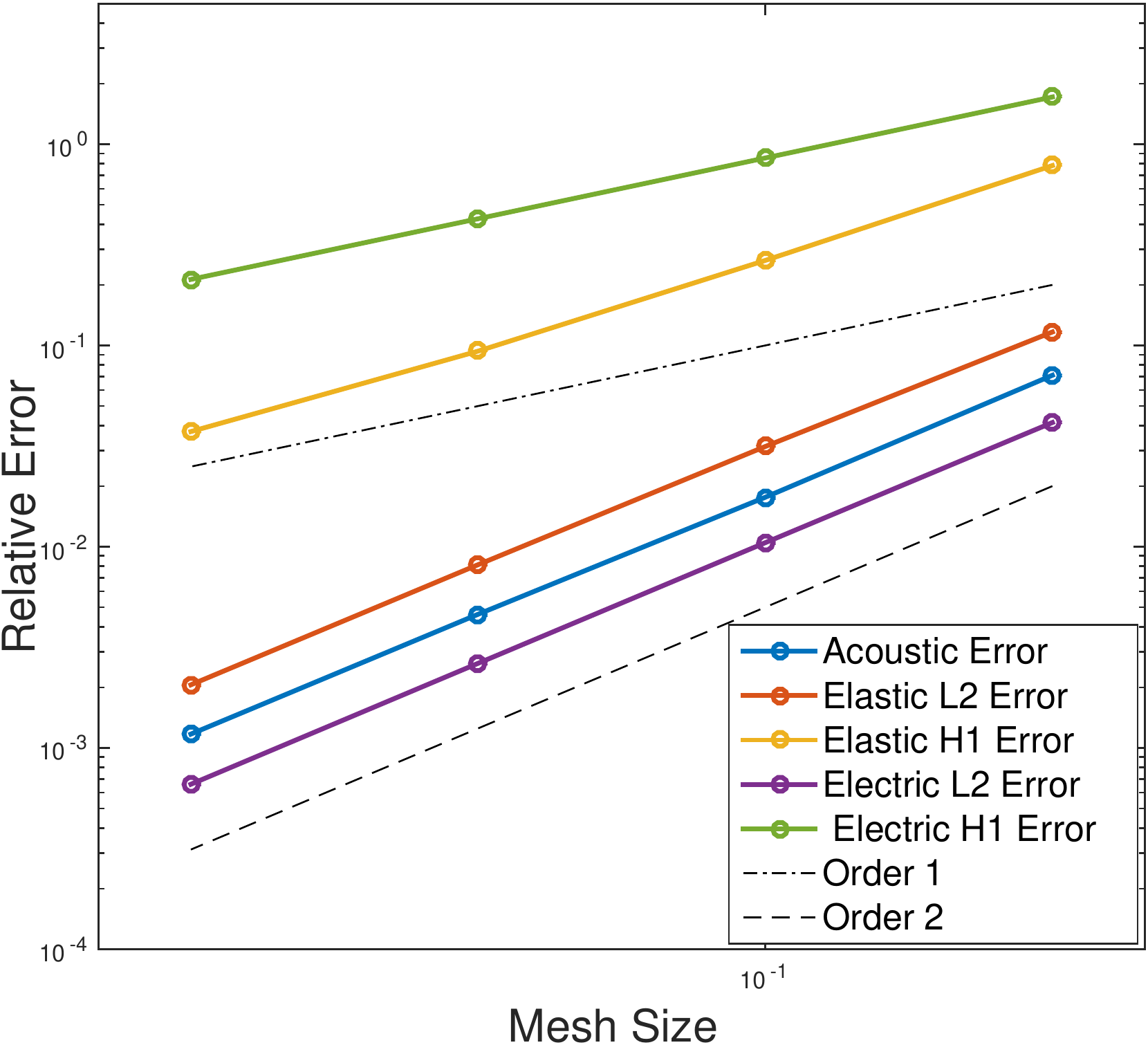}\qquad
\includegraphics[height =5cm]{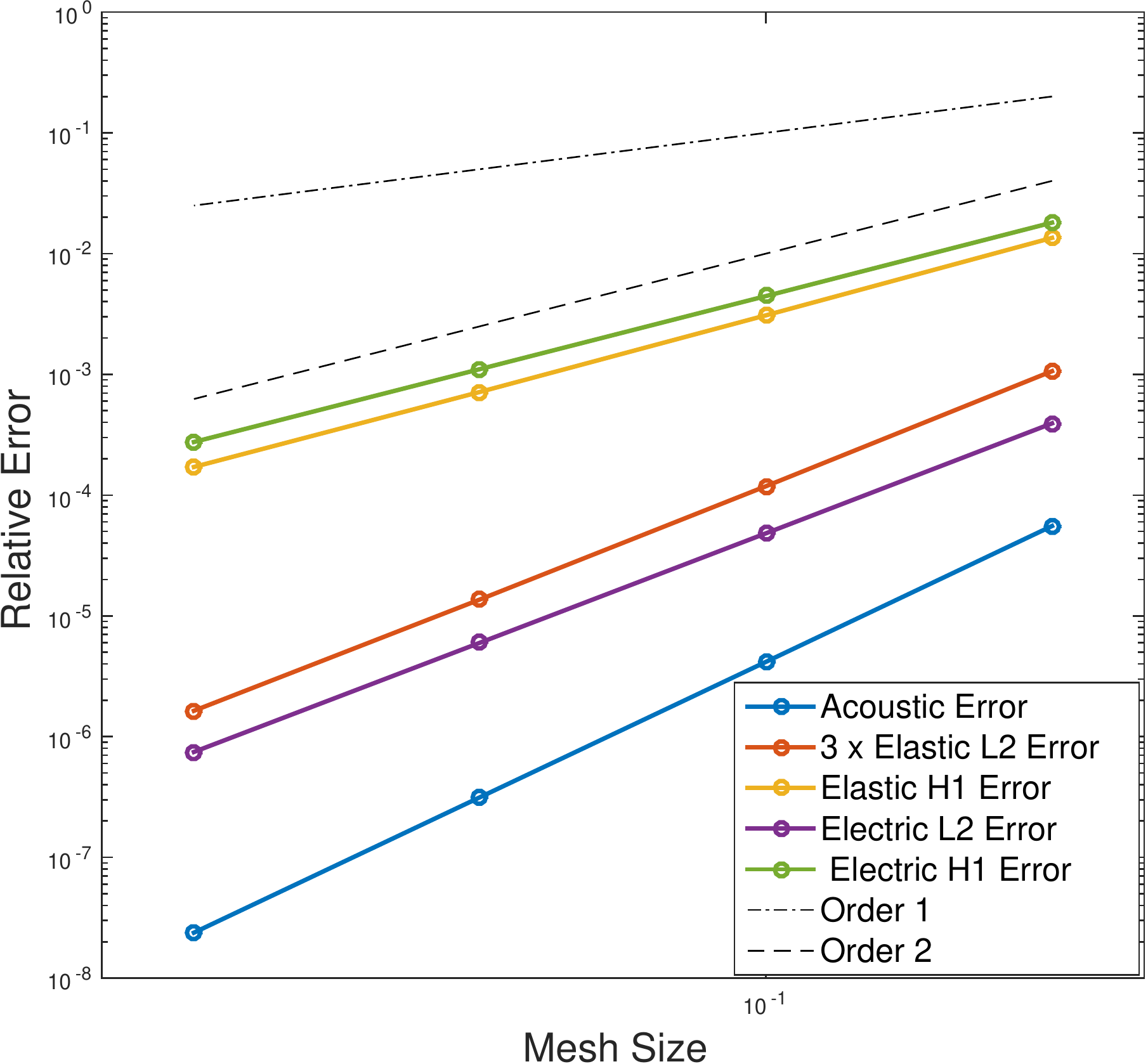}
\caption{{\footnotesize Convergence studies for the frequency domain problem are shown for $\mathcal P_1/\mathcal P_0$ boundary elements and $\mathcal P_1$ finite elements (left) and $\mathcal P_2/\mathcal P_1$ boundary elements and $\mathcal P_2$ finite elements (right).}}\label{fig:2}
\end{figure} 
%
\paragraph{Convergence studies in the time domain.}
%
Experiments were carried out using matching time stepping (for the FEM part) and CQ (for the BEM part) based on both Trapezoidal Rule and BDF2 for time evolution. The fully discrete method based on the trapezoidal rule can analyzed in the same way as BDF2, using results from \cite{Banjai:2010}. Note that the only difference is the lack of knowledge of how the error constants depend on the time variable. 

Just as in the frequency domain case, the rectangle $\Omega_-:=(1,\,3)\times(1,\, 2)\subset \mathbb R^2$ was used as the piezoelectric domain where the elastic plane pressure wave
\[
\mathbf{u}(\mathbf{x},t) = \mathcal{H}(c_Lt-\mathbf{x}\cdot\mathbf{d})\sin\left(3(c_Lt-\mathbf{x}\cdot\mathbf{d})\right)\mathbf{d}, \quad \mathbf{d}=\left(\tfrac{1}{\sqrt{2}},\tfrac{1}{\sqrt{2}}\right), \quad c_L = \sqrt{\tfrac{2\mu+\lambda}{\rho}},
\]
and the causal electric field
\[
\psi(\mathbf x,t) = \mathcal{H}(t)(x_1^3+x_1^3x_2-3x_1x_2^2-\tfrac{1}{3}x_2^3),
\] 
were imposed. In the acoustic domain, the cylindrical acoustic wave
\[
v(\mathbf x,t)=\mathcal{L}^{-1}\left\{\imath H^{(1)}_0(\imath s |\mathbf x-\mathbf x_0|)\,\mathcal{L}\{\mathcal{H}(t)\sin(2t)\} \right\},
\]
centered at $\mathbf x_0 = (2,1.5)$, was imposed. In all cases $\mathcal{H}$ is the piecewise polynomial approximation to Heaviside's step function
\[
\mathcal H(t):= t^5(1-5(t-1)+15(t-1)^2-35(t-1)^3+70(t-1)^4-126(t-1)^5)\chi_{[0,1]}+\chi_{[1,\infty)}.
\]
Analogously to the frequency domain experiments, right hand sides were added so that $(\mathbf u,\psi)$ are solutions to the system
and the appropriate Dirichlet data was sampled at the boundary using Equations \eqref{eq:2.3d}. \eqref{eq:2.3e}, and \eqref{eq:2.3g} to define the boundary data.

The experiment was ran using $k=1,2$ for the $\mathcal P_k$ finite elements and $\mathcal P_k/\mathcal P_{k-1}$ boundary elements. The time step and mesh size were refined simultaneously and the final time was $t=1.5$.  All errors are measured at the final time: $E^{v}_{h,\kappa}$ measures the maximum error on twenty randomly chosen points in the exterior domain, while elastic and electric fields errors are measured in the $L^2(\Omega_-)$ and $H^1(\Omega_-)$ norms. Tables \ref{tab:3} and \ref{tab:4} along with Figure \ref{fig:3} show the outcome of the convergence tests.
\begin{table}[h]\centering
\scalebox{0.75}{
\begin{tabular}{ccccccccccccc}
\hline
\multicolumn{3}{|c|}{$k=1$} & \multicolumn{4}{|c|}{$L^2(\Omega_-)$} & \multicolumn{4}{|c|}{$H^1(\Omega_-)$} \\
\hline
\multicolumn{1}{|c|}{$h/\kappa$} & \multicolumn{1}{c|}{$E^{v}_{h,\kappa}$} & \multicolumn{1}{c|}{e.c.r.} & \multicolumn{1}{c|}{$E^{\mathbf u}_{h,\kappa}$} & \multicolumn{1}{c|}{e.c.r.} & \multicolumn{1}{c|}{$E^{\psi}_{h,\kappa}$} & \multicolumn{1}{c|}{e.c.r.} & \multicolumn{1}{c|}{$E^{\mathbf u}_{h,\kappa}$} & \multicolumn{1}{c|}{e.c.r.} & \multicolumn{1}{c|}{$E^{\psi}_{h,\kappa}$} & \multicolumn{1}{c|}{e.c.r.} \\ \hline
 2 E-1/7.5 E-2 	&   2.054 E-2 & ---    & 6.363 E-2  & ---   & 4.179 E-2   & ---    & 5.714 E-1 & ---    & 1.702     & ---    \\ \hline
 1 E-1/3.75 E-2 &   7.864 E-3  & 1.385  & 1.726 E-2  & 1.882 & 1.034 E-2 & 2.015  & 2.067 E-1  & 1.467  & 8.515 E-1  & 0.999  \\ \hline
 5 E-2/1.875 E-2 &   1.831 E-3 & 2.102  & 4.537 E-3  & 1.928 & 2.590 E-3  & 1.997  & 8.600 E-2 & 1.265  & 4.258 E-1 & 1.000  \\ \hline
 2.5 E-2/9.375 E-3 &   4.485 E-4  & 2.030  & 1.159 E-3  & 1.969 &  6.485 E-4  & 1.997  & 3.912 E-2 & 1.136  & 2.129 E-1 & 1.000  \\ \hline   
\end{tabular}
}
\caption{{\footnotesize Relative errors and estimated convergence rates in the time domain for the Trapezoidal Rule Convolution Quadrature with $\mathcal P_1$ finite elements and $\mathcal P_1/\mathcal P_0$ boundary elements: $h$ represents the maximum length of the panels used to discretize the boundary, $\kappa$ is the size of the timesteps. The errors are measured at the final time  $T=1.5$.}}\label{tab:3}
\end{table}
\begin{table}[h]\centering
\scalebox{0.75}{
\begin{tabular}{ccccccccccccc}
\hline
\multicolumn{3}{|c|}{$k=2$} & \multicolumn{4}{|c|}{$L^2(\Omega_-)$} & \multicolumn{4}{|c|}{$H^1(\Omega_-)$} \\
\hline
\multicolumn{1}{|c|}{$h/\kappa$} & \multicolumn{1}{c|}{$E^{v}_{h,\kappa}$} & \multicolumn{1}{c|}{e.c.r.} & \multicolumn{1}{c|}{$E^{\mathbf u}_{h,\kappa}$} & \multicolumn{1}{c|}{e.c.r.} & \multicolumn{1}{c|}{$E^{\psi}_{h,\kappa}$} & \multicolumn{1}{c|}{e.c.r.} & \multicolumn{1}{c|}{$E^{\mathbf u}_{h,\kappa}$} & \multicolumn{1}{c|}{e.c.r.} & \multicolumn{1}{c|}{$E^{\psi}_{h,\kappa}$} & \multicolumn{1}{c|}{e.c.r.} \\ \hline
 2 E-1/7.5 E-2  & 3.422 E-2  & ---   & 4.627 E-2  & ---   & 1.544 E-2 & ---   & 6.323 E-1  & ---   & 1.495 E-1     & ---    \\ \hline
 1 E-1/3.75 E-2 & 2.329 E-2  & 0.555 & 1.242 E-2  & 1.898 & 3.722 E-3 & 2.052  & 1.821 E-1  & 1.795 & 3.260 E-2 & 2.197  \\ \hline
 5 E-2/1.875 E-2 & 5.836 E-3  & 1.997 & 3.128 E-3  & 1.989 & 9.194 E-4 & 2.017 & 4.607 E-2  & 1.983 & 7.735 E-3 & 2.076  \\ \hline
 2.5 E-2/9.375 E-3 & 1.444 E-3  & 2.015 & 7.826 E-4  & 1.999 & 2.288 E-4 & 2.007 & 1.151 E-2  & 2.001 & 1.907 E-3 & 2.020  \\ \hline  
\end{tabular}
}
\caption{{\footnotesize Relative errors and estimated convergence rates in the time domain for the Trapezoidal Rule Convolution Quadrature with $\mathcal P_2$ finite elements and $\mathcal P_2/\mathcal P_1$ boundary elements: $h$ represents the maximum length of the panels used to discretize the boundary, $\kappa$ is the size of the timesteps. The errors are measured at the final time  $T=1.5$ .}}\label{tab:4}
\end{table}
\begin{figure}[h]\centering
\includegraphics[height =5cm]{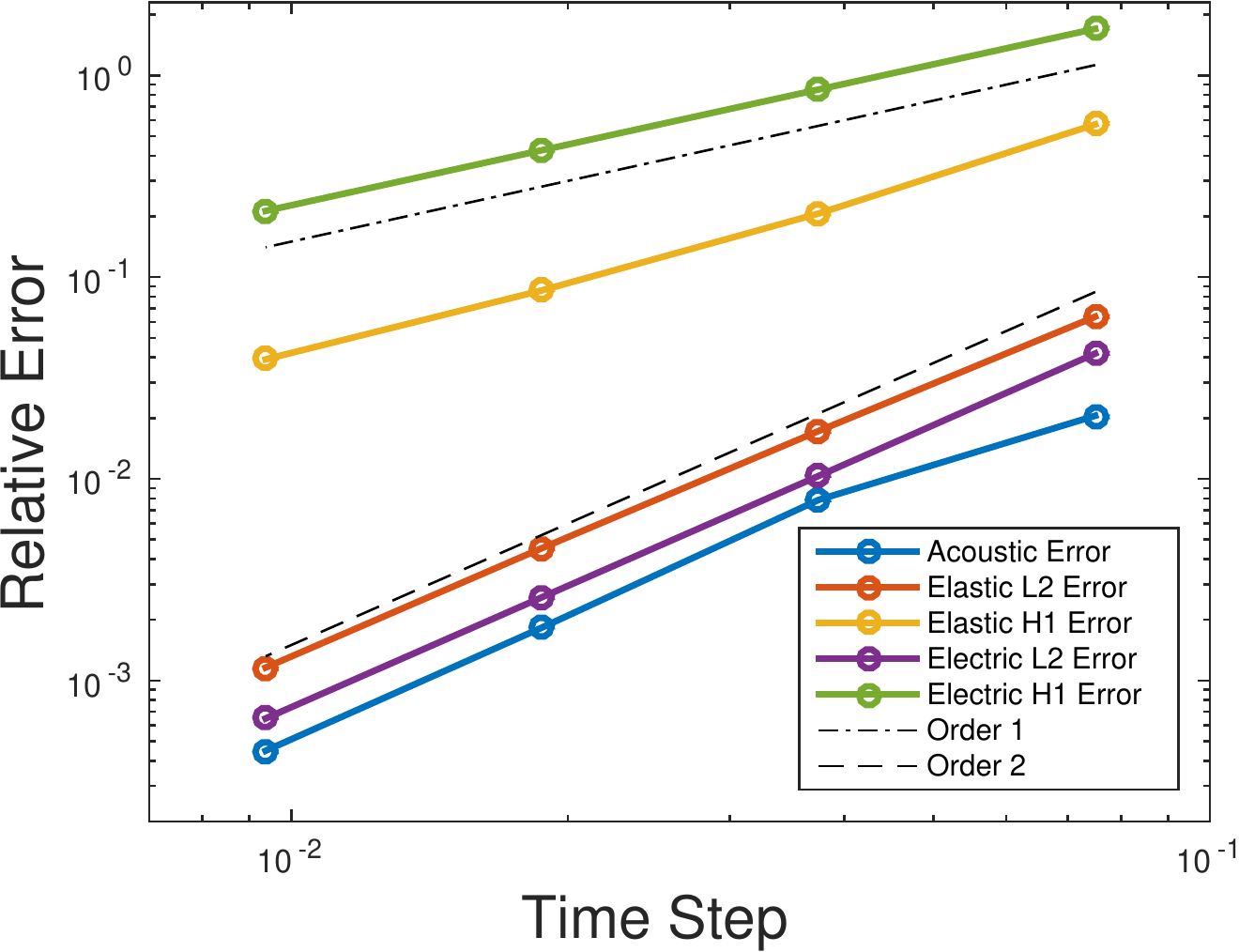}\qquad
\includegraphics[height =5cm]{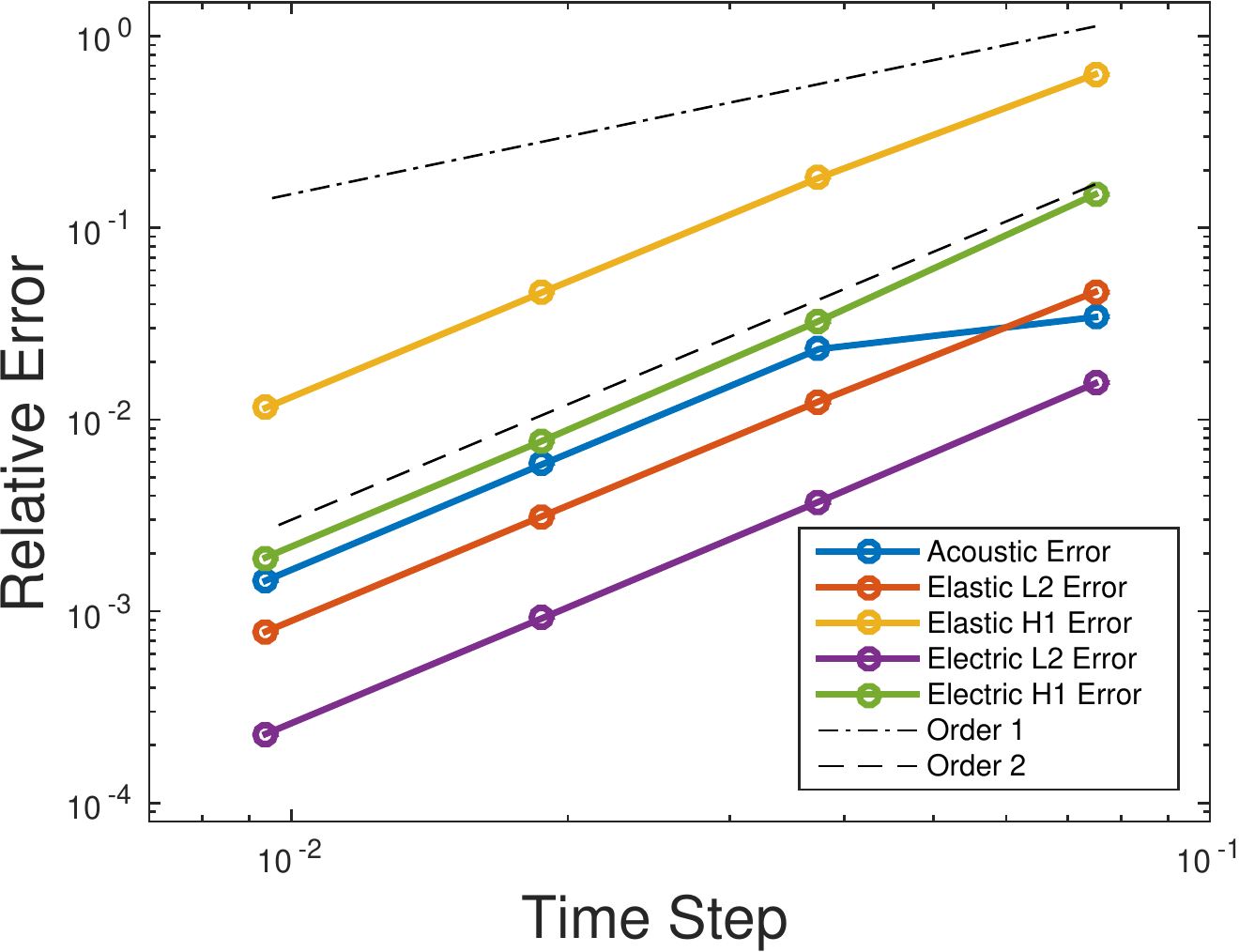}
\caption{{\footnotesize Convergence studies for the Trapezoidal Rule-based time stepping in the case of $\mathcal P_1/\mathcal P_0$ boundary elements and $\mathcal P_1$ finite elements (left) and $\mathcal P_2/\mathcal P_1$ boundary elements and $\mathcal P_2$ finite elements (right).}}\label{fig:3}
\end{figure} 
\begin{table}[h]\centering
\scalebox{0.75}{
\begin{tabular}{ccccccccccccc}
\hline
\multicolumn{3}{|c|}{$k=1$} & \multicolumn{4}{|c|}{$L^2(\Omega_-)$} & \multicolumn{4}{|c|}{$H^1(\Omega_-)$} \\
\hline
\multicolumn{1}{|c|}{$h/\kappa$} & \multicolumn{1}{c|}{$E^{v}_{h,\kappa}$} & \multicolumn{1}{c|}{e.c.r.} & \multicolumn{1}{c|}{$E^{\mathbf u}_{h,\kappa}$} & \multicolumn{1}{c|}{e.c.r.} & \multicolumn{1}{c|}{$E^{\psi}_{h,\kappa}$} & \multicolumn{1}{c|}{e.c.r.} & \multicolumn{1}{c|}{$E^{\mathbf u}_{h,\kappa}$} & \multicolumn{1}{c|}{e.c.r.} & \multicolumn{1}{c|}{$E^{\psi}_{h,\kappa}$} & \multicolumn{1}{c|}{e.c.r.} \\ \hline
 2 E-1/7.5 E-2  	& 2.805 E-2  & ---   & 9.448 E-2  & ---   & 4.772 E-2 & ---   & 7.683 E-1  & ---   & 1.709     & ---    \\ \hline
1 E-1/3.75 E-2 	& 2.543 E-2  & 0.141 & 3.401 E-2  & 1.474 & 1.377 E-2 & 1.793  & 3.931 E-1 & 0.967 & 8.546 E-1 & 1.000  \\ \hline
 5 E-2/1.875 E-2 & 1.571 E-2  & 0.694 & 1.010 E-2 & 1.749 & 3.689 E-3  & 1.900 & 1.513 E-1 & 1.378 & 4.264 E-1 & 1.003  \\ \hline 
2.5 E-2/9.375 E-3 & 4.650 E-3  & 1.757 & 2.655 E-3  & 1.930 & 9.379 E-4 & 1.975 & 5.231 E-2 & 1.532 & 2.130 E-1 & 1.001  \\ \hline 
\end{tabular}
}
\caption{{\footnotesize Relative errors and estimated convergence rates in the time domain for the BDF2-based Convolution Quadrature with $\mathcal P_1$ finite elements and $\mathcal P_1/\mathcal P_0$ boundary elements: $h$ represents the maximum length of the panels used to discretize the boundary, $\kappa$ is the size of the timesteps. The errors are measured at the final time  $T=1.5$ .}}\label{tab:5}
\end{table}
\begin{table}[h]\centering
\scalebox{0.75}{
\begin{tabular}{ccccccccccccc}
\hline
\multicolumn{3}{|c|}{$k=2$} & \multicolumn{4}{|c|}{$L^2(\Omega_-)$} & \multicolumn{4}{|c|}{$H^1(\Omega_-)$} \\
\hline
\multicolumn{1}{|c|}{$h/\kappa$} & \multicolumn{1}{c|}{$E^{v}_{h,\kappa}$} & \multicolumn{1}{c|}{e.c.r.} & \multicolumn{1}{c|}{$E^{\mathbf u}_{h,\kappa}$} & \multicolumn{1}{c|}{e.c.r.} & \multicolumn{1}{c|}{$E^{\psi}_{h,\kappa}$} & \multicolumn{1}{c|}{e.c.r.} & \multicolumn{1}{c|}{$E^{\mathbf u}_{h,\kappa}$} & \multicolumn{1}{c|}{e.c.r.} & \multicolumn{1}{c|}{$E^{\psi}_{h,\kappa}$} & \multicolumn{1}{c|}{e.c.r.} \\ \hline
 2 E-1/7.5 E-2  & 2.959 E-2  & ---   & 9.368 E-2  & ---   & 2.999 E-2  & ---   & 8.178 E-1 & ---   & 2.287 E-1     & ---    \\ \hline
1 E-1/3.75 E-2 	& 3.047 E-2  & -0.041 & 3.884 E-2  & 1.270 & 1.247 E-2  & 1.265  & 4.699 E-1 & 0.799 & 1.097 E-1 & 1.059  \\ \hline
 5 E-2/1.875 E-2 & 1.958 E-2  & 0.638 & 1.186 E-2  & 1.712 & 3.566 E-3 & 1.806 & 1.664 E-1  & 1.498 & 3.084 E-3 & 1.832  \\ \hline 
2.5 E-2/9.375 E-3 & 5.680 E-3  & 1.785 & 3.099 E-3  & 1.936 & 9.102 E-4 & 1.970 & 4.511 E-2 & 1.883 & 7.617 E-3 & 2.018  \\ \hline 
\end{tabular}
}
\caption{{\footnotesize Relative errors and estimated convergence rates in the time domain for the BDF2-based Convolution Quadrature with $\mathcal P_2$ finite elements and $\mathcal P_2/\mathcal P_1$ boundary elements: $h$ represents the maximum length of the panels used to discretize the boundary, $\kappa$ is the size of the timesteps. The errors are measured at the final time  $T=1.5$ .}}\label{tab:6}
\end{table}
\begin{figure}[h]\centering
\includegraphics[height =5cm]{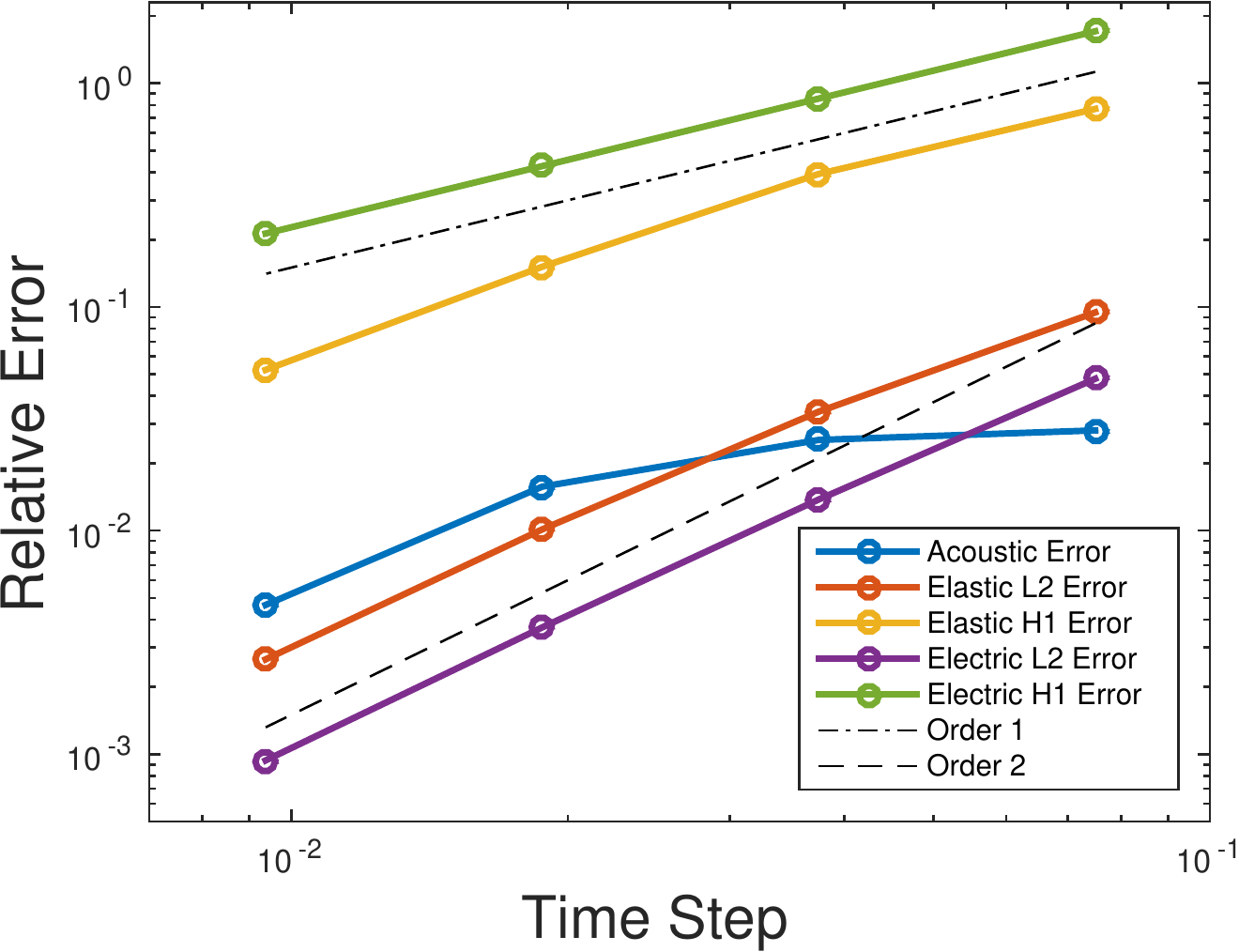}\qquad
\includegraphics[height =5cm]{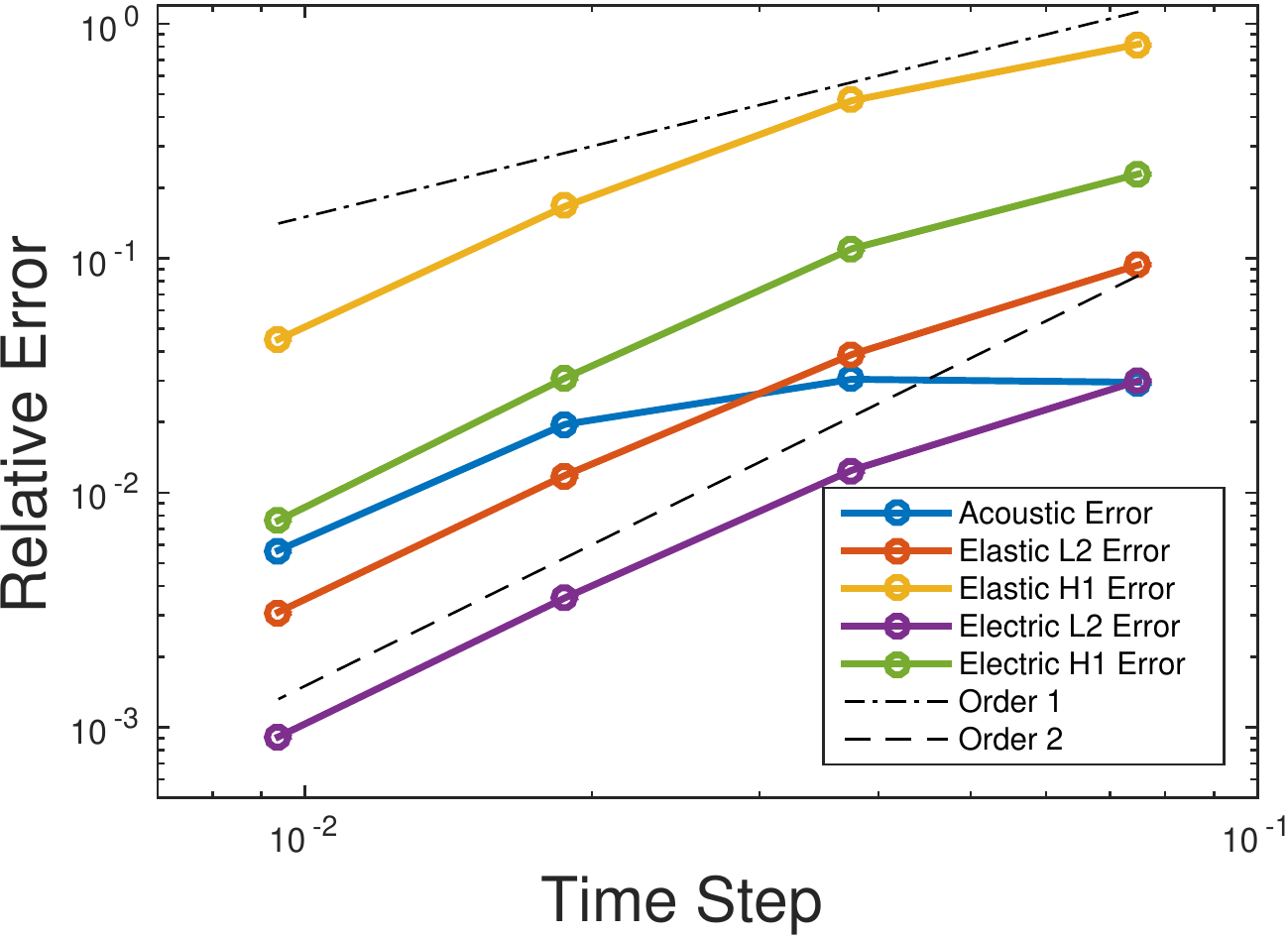}
\caption{{\footnotesize Convergence studies for the BDF2-based time stepping in the case of $\mathcal P_1/\mathcal P_0$ boundary elements and $\mathcal P_1$ finite elements (left) and $\mathcal P_2/\mathcal P_1$ boundary elements and $\mathcal P_2$ finite elements (right).}}\label{fig:4}
\end{figure} 
%
\paragraph{A sample simulation.}
%
As an example, we consider the interaction between the acoustic pulse
\[
v^{inc}= 3\chi_{[0,0.3]}(88s)\sin{(88s)},\quad s:= (t-\mathbf r\cdot\mathbf d),\quad \mathbf r:=(x,y),\quad \mathbf d := (1,5)/\sqrt{26},
\]
and a pentagonal piezoelectric scatterer with mass density given by
\[
\rho = 5 + 25e^{-(10|\mathbf r|)^2}.
\]
The remaining physical parameters of the solid were taken to be those defined by \eqref{eq:00} and the entire solid/fluid interface was taken as Dirichlet boundary, where a grounding potential $\psi\equiv 0$ was imposed as boundary condition for all times. The simulation was carried out using $\mathcal P_2$ Lagrangian finite elements and $\mathcal P_2/\mathcal P_1$ continuous/discontinuous Galerkin boundary elements with Trapezoidal Rule-based time discretization using a time step $\kappa=0.005$. Figures \ref{fig:5} to \ref{fig:7} show snapshots of the process at different times.
\begin{figure}[h]\centering
\includegraphics[height =7cm]{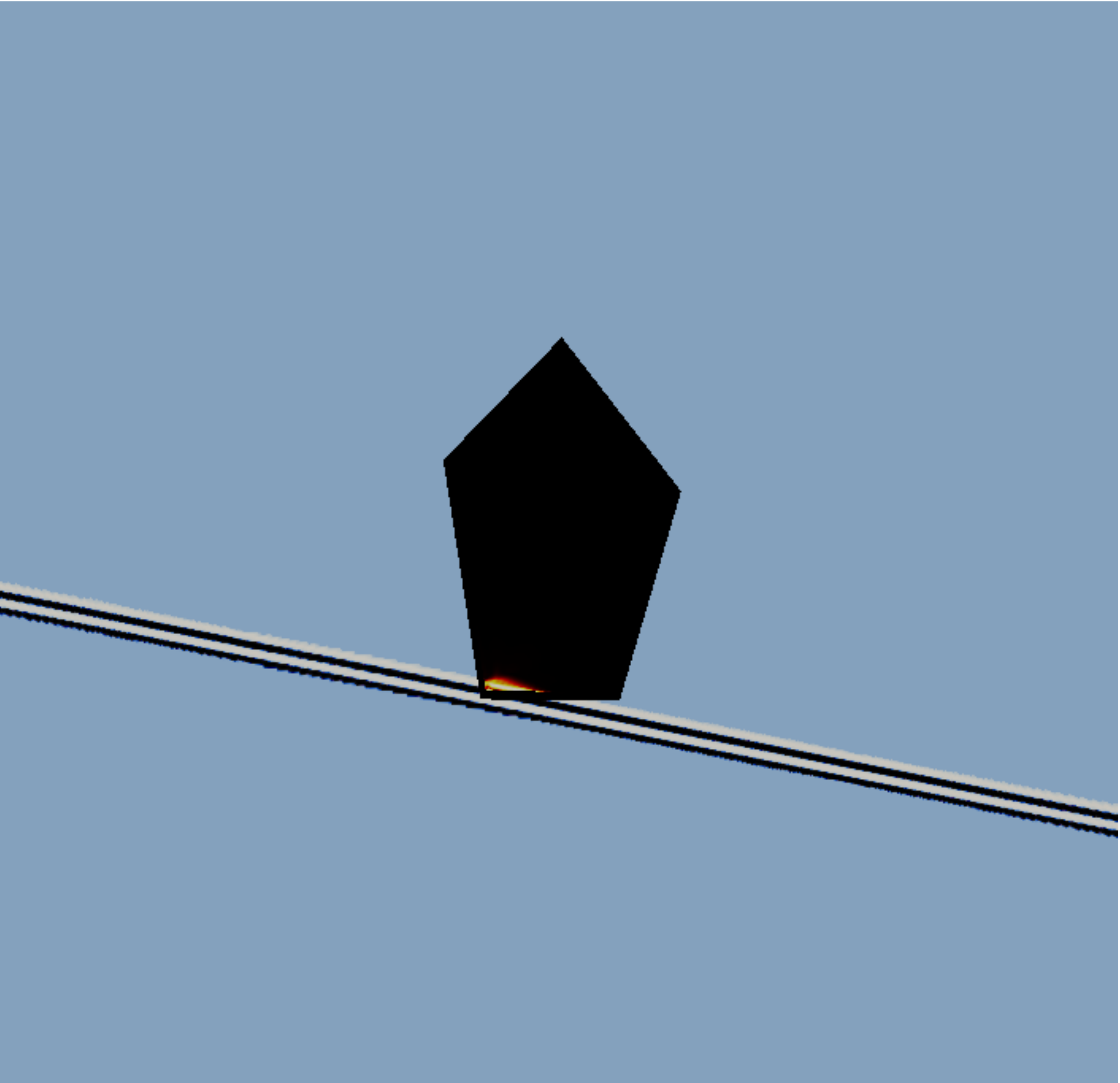}
\includegraphics[height =7cm]{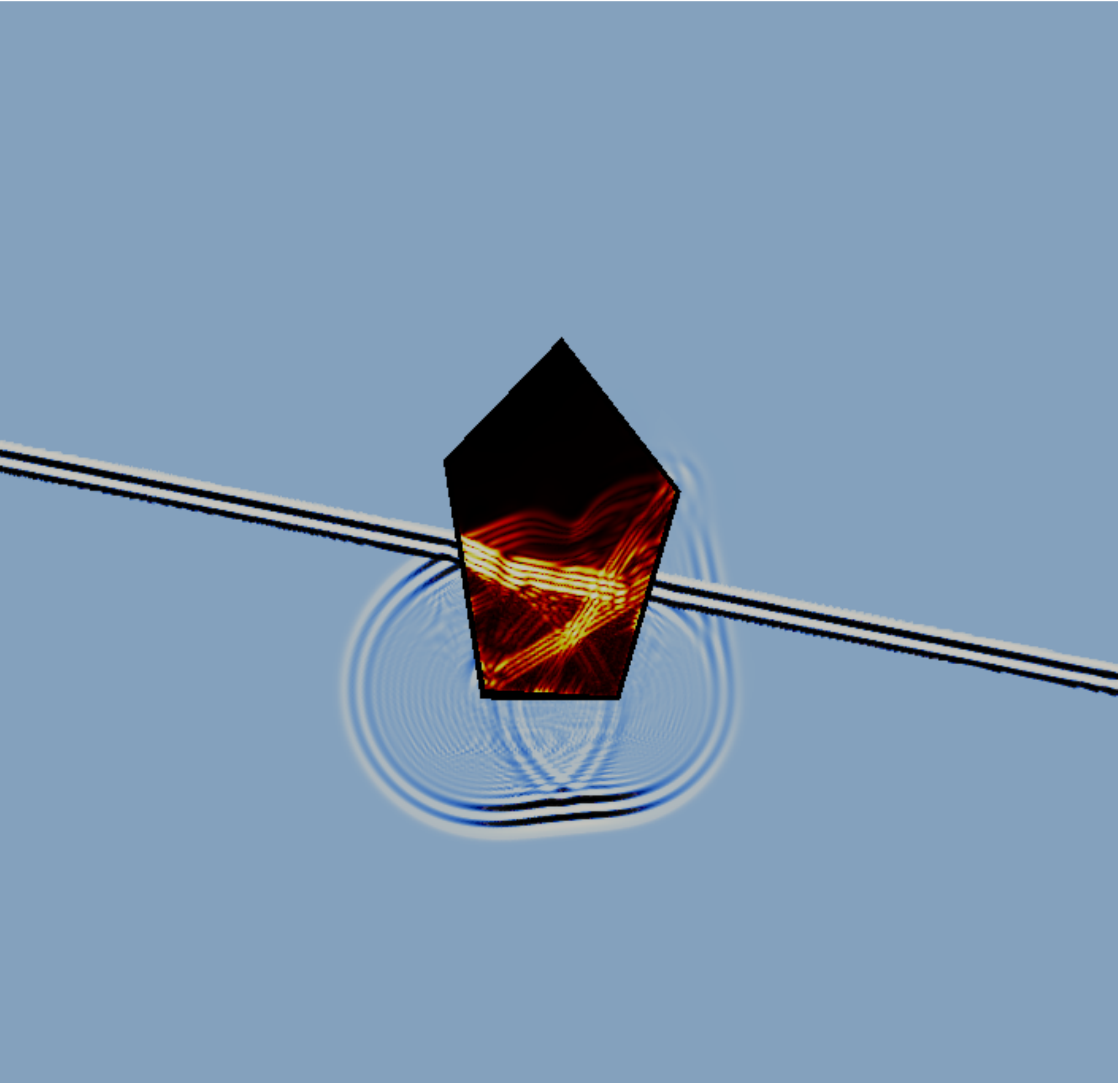}
\includegraphics[height =7cm]{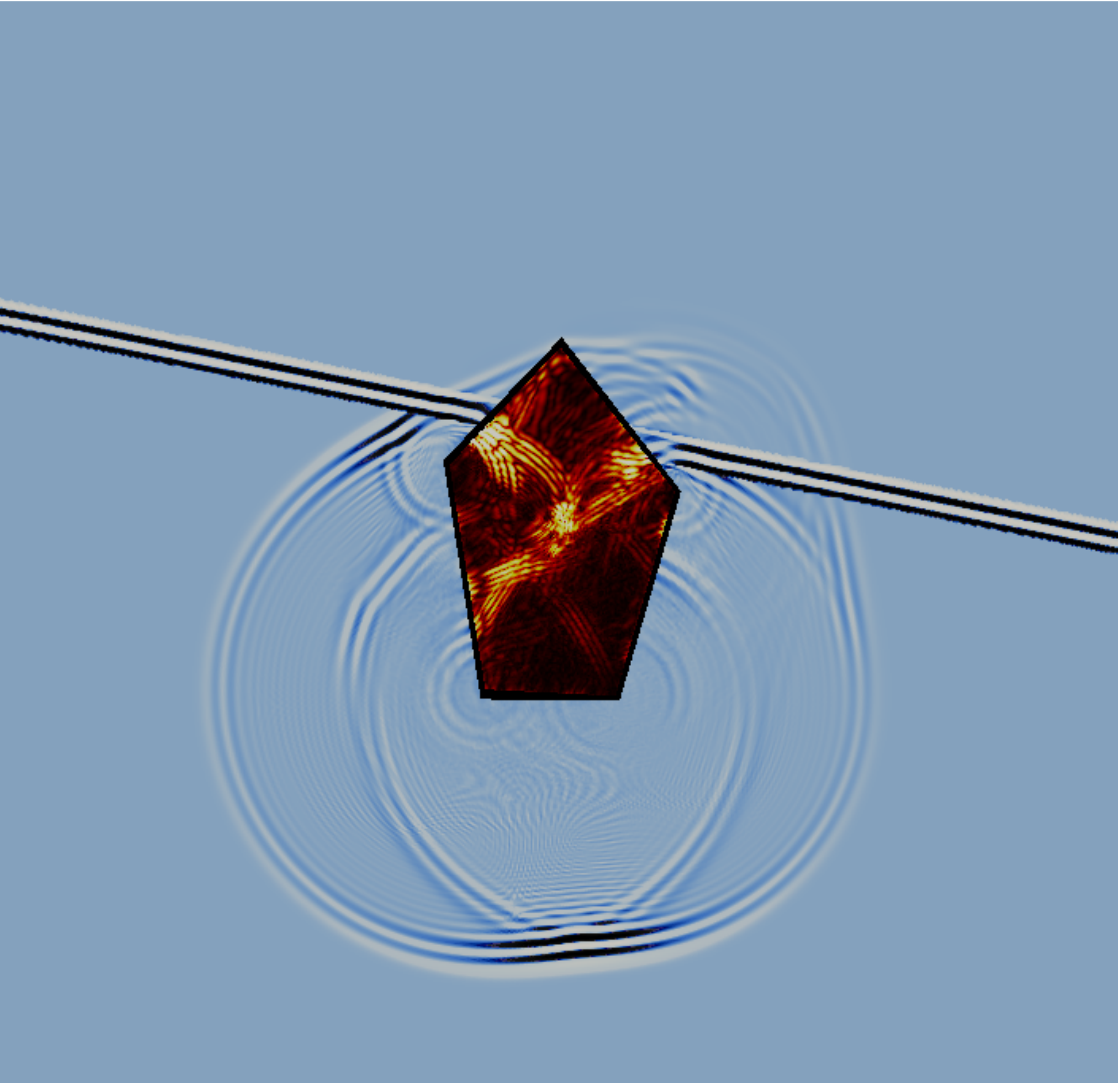}
\includegraphics[height =7cm]{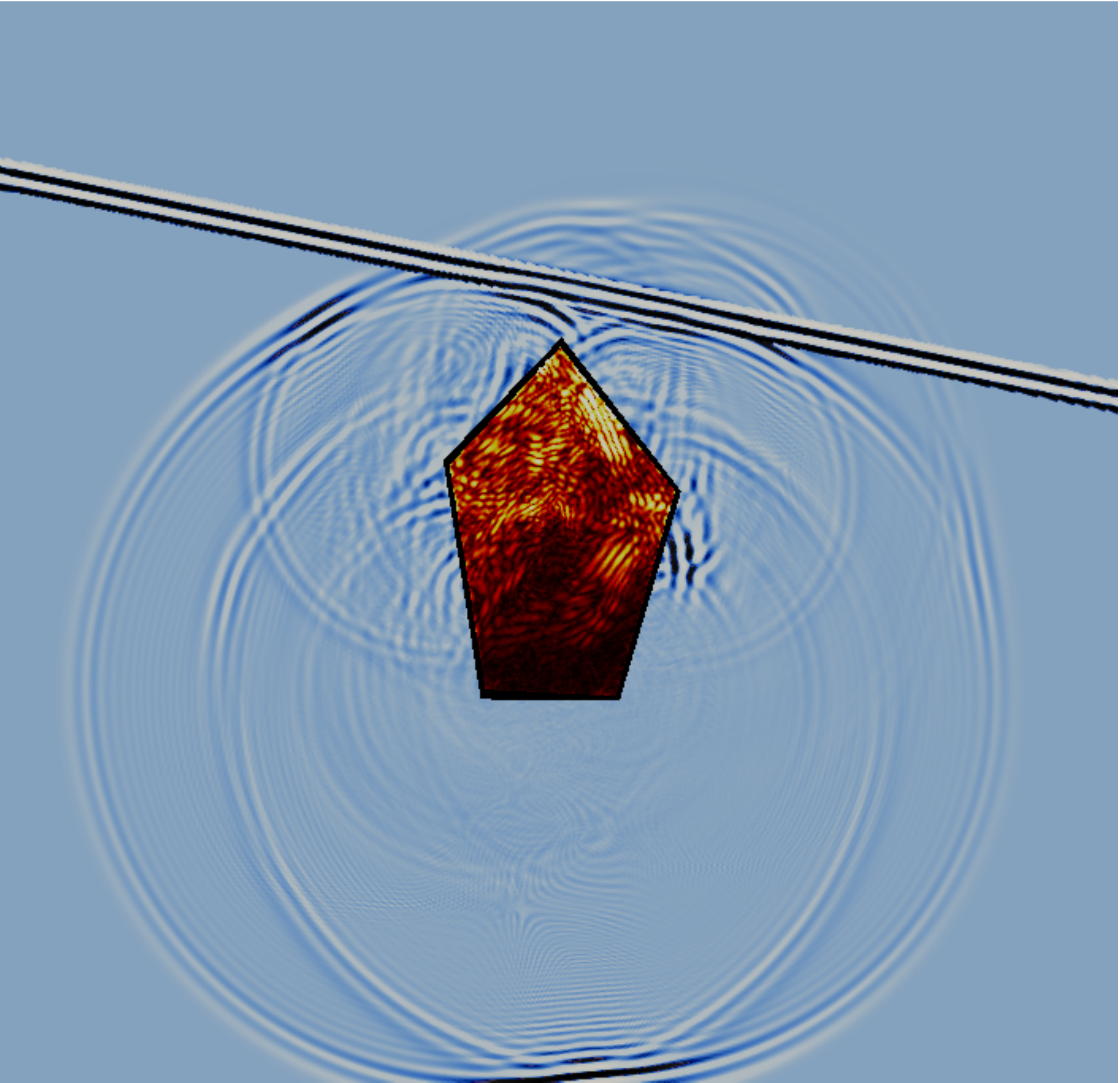}
\caption{{\footnotesize The total acoustic wave shown at times $t=0.175, 0.7, 1.225, 1.75$.}}\label{fig:5}
\end{figure} 

\begin{figure}[h]\centering
\includegraphics[height =3.25cm]{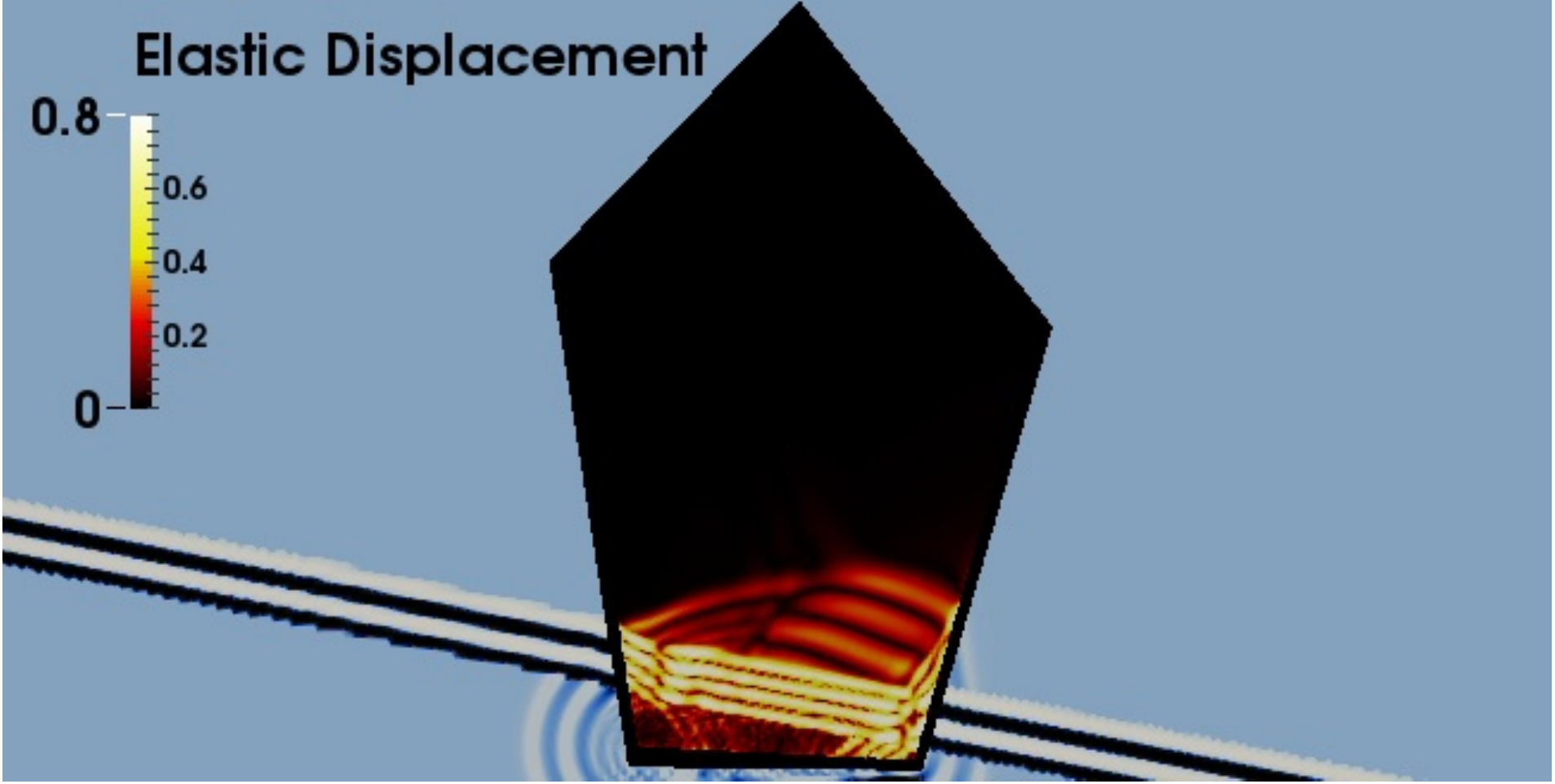}
\includegraphics[height =3.25cm]{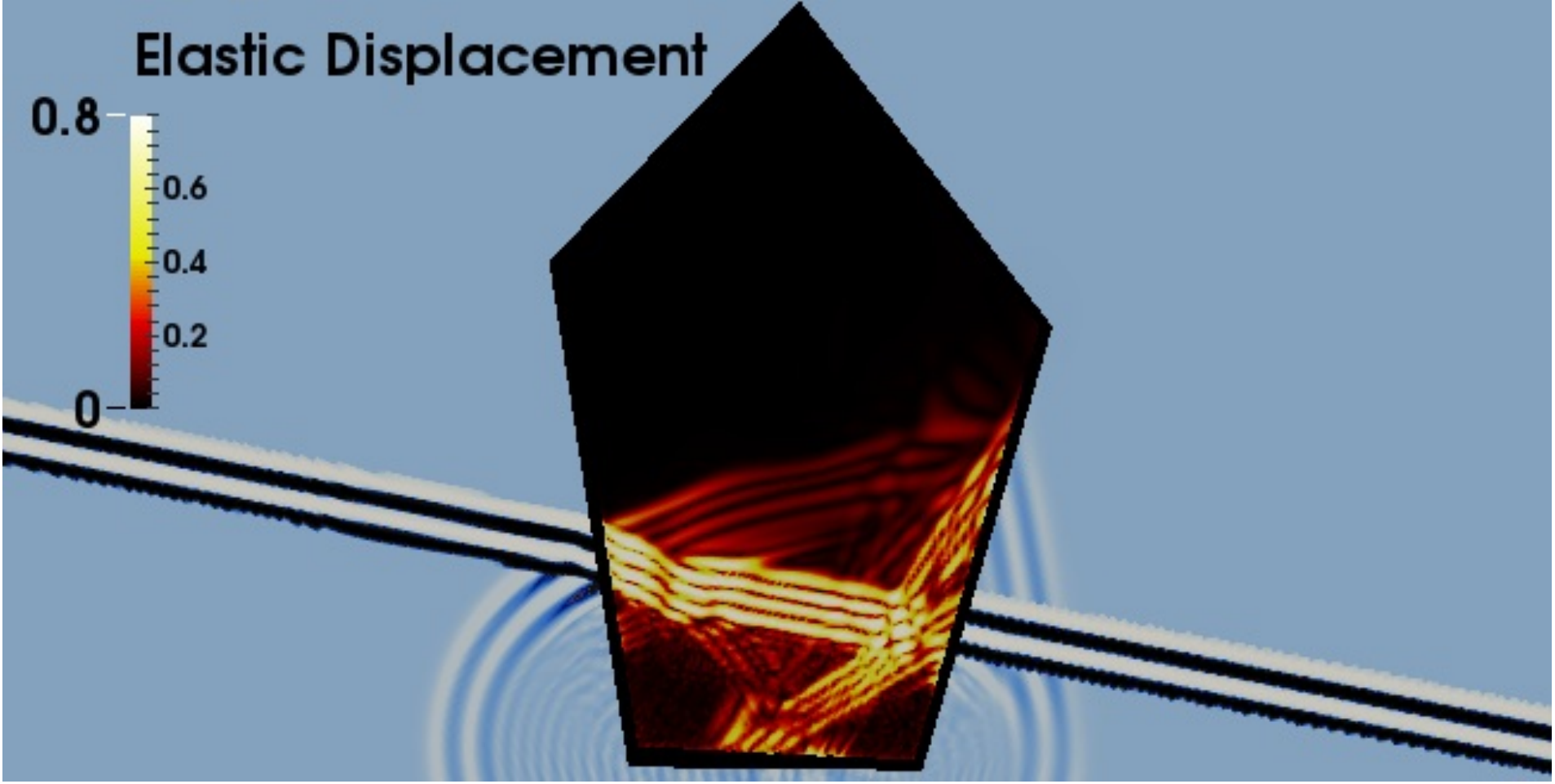}
\includegraphics[height =3.25cm]{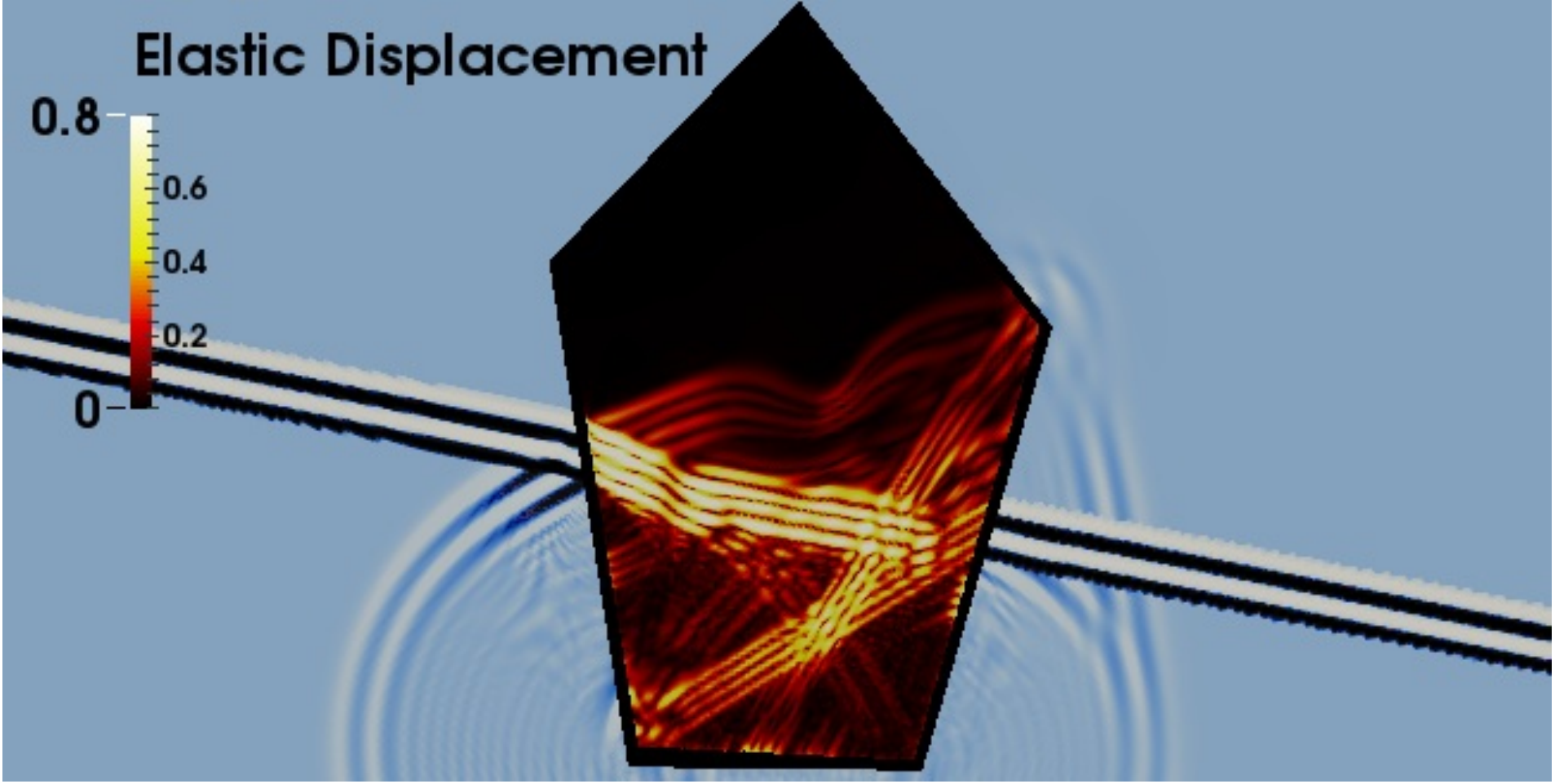}
\includegraphics[height =3.25cm]{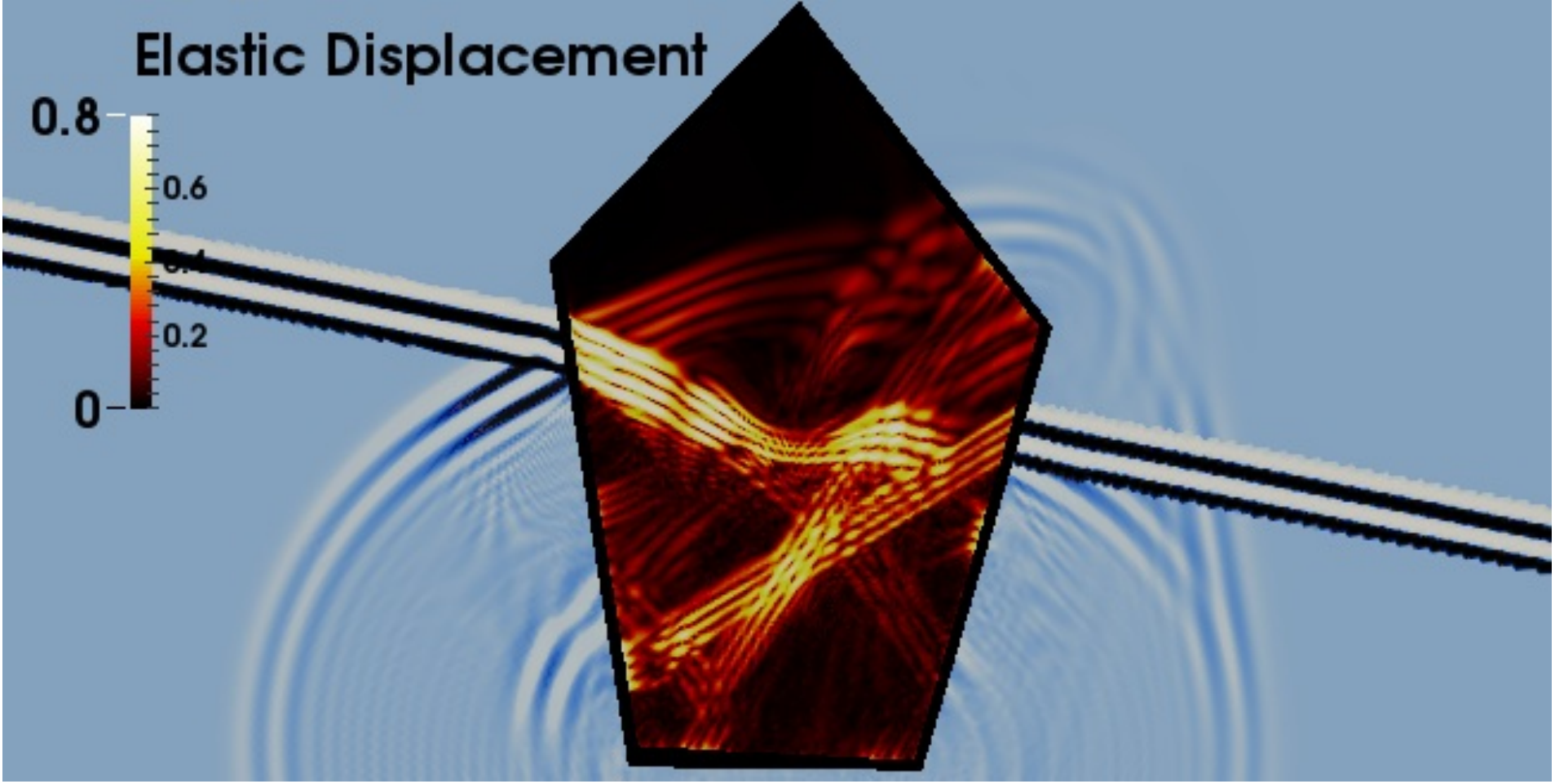}
\includegraphics[height =3.25cm]{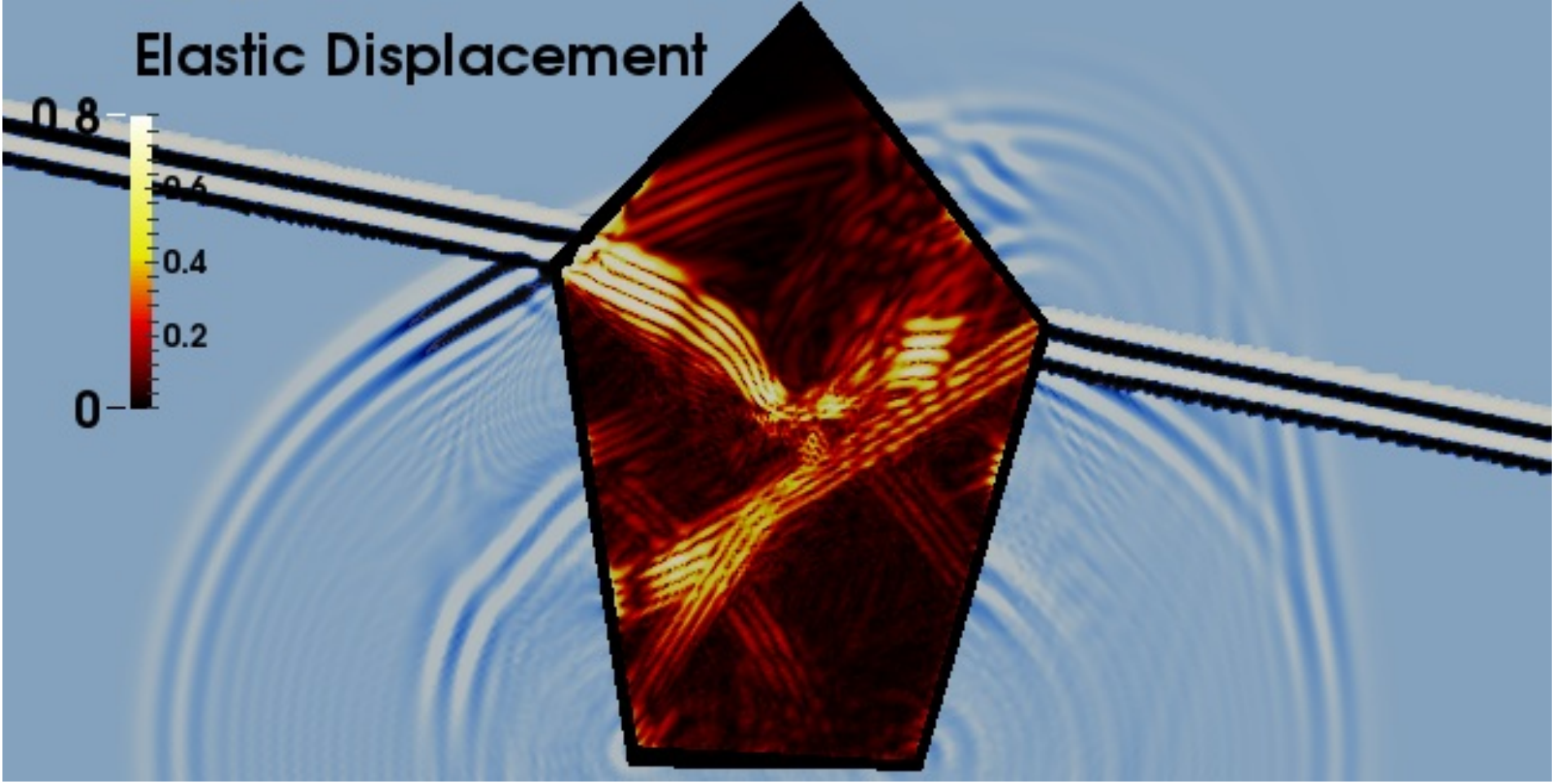}
\includegraphics[height =3.25cm]{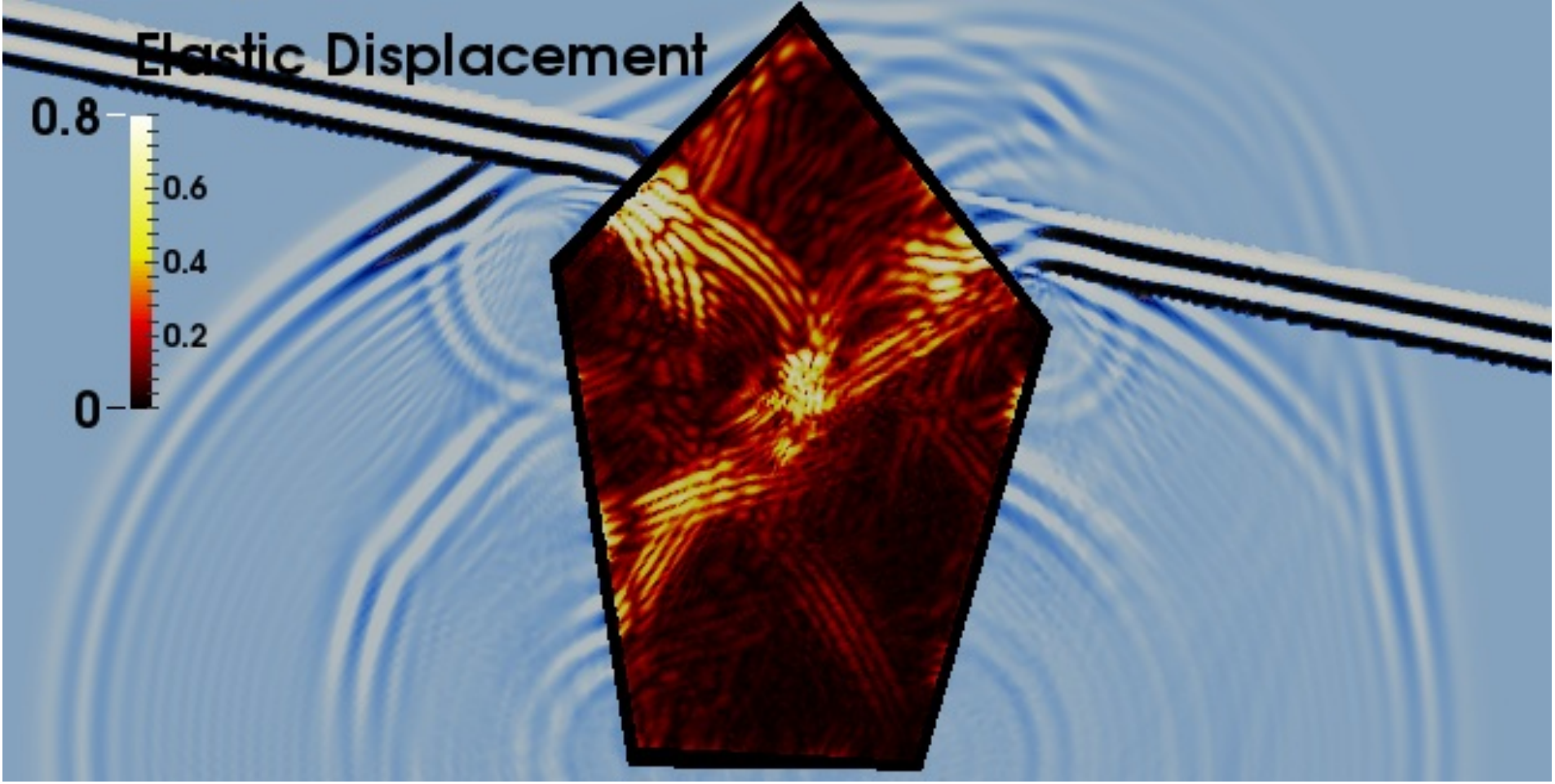}
\caption{{\footnotesize Close up of the norm of the elastic displacement at times $t=0.35, 0.525, 0.7, 0.875, 1.05, 1.225 $.}}\label{fig:6}
\end{figure} 

\begin{figure}[h]\centering
\includegraphics[height =3.25cm]{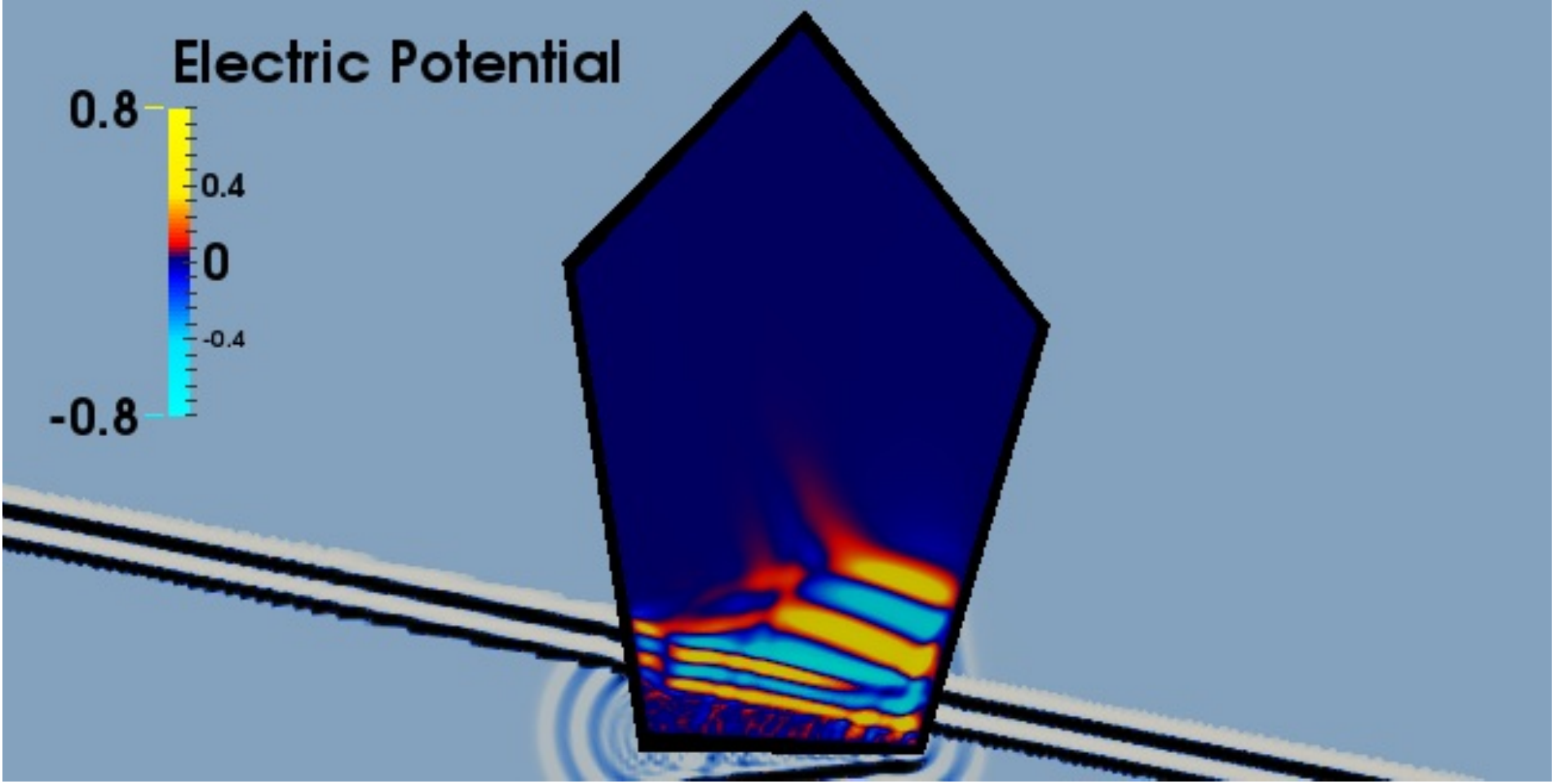}
\includegraphics[height =3.25cm]{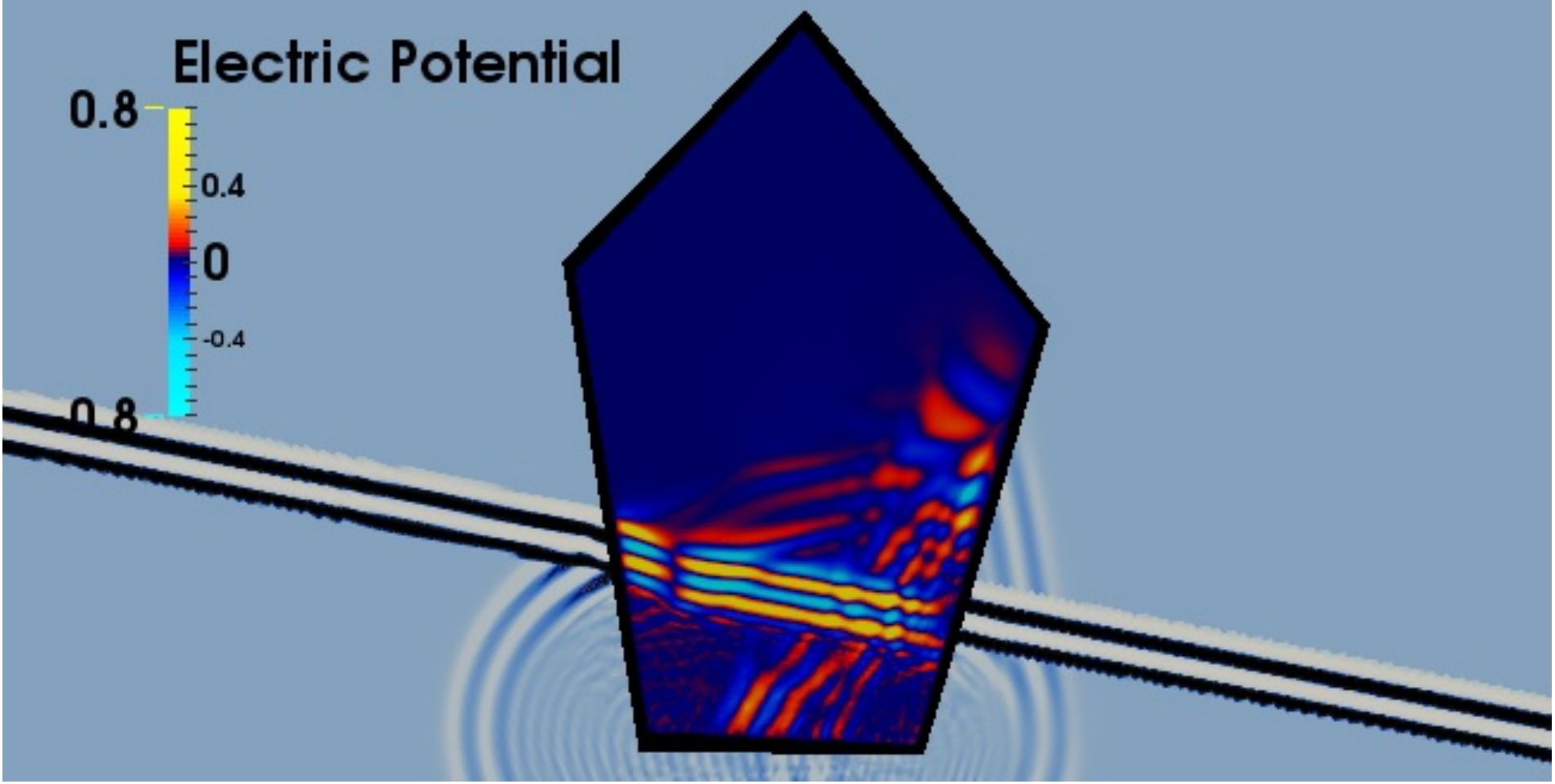}
\includegraphics[height =3.25cm]{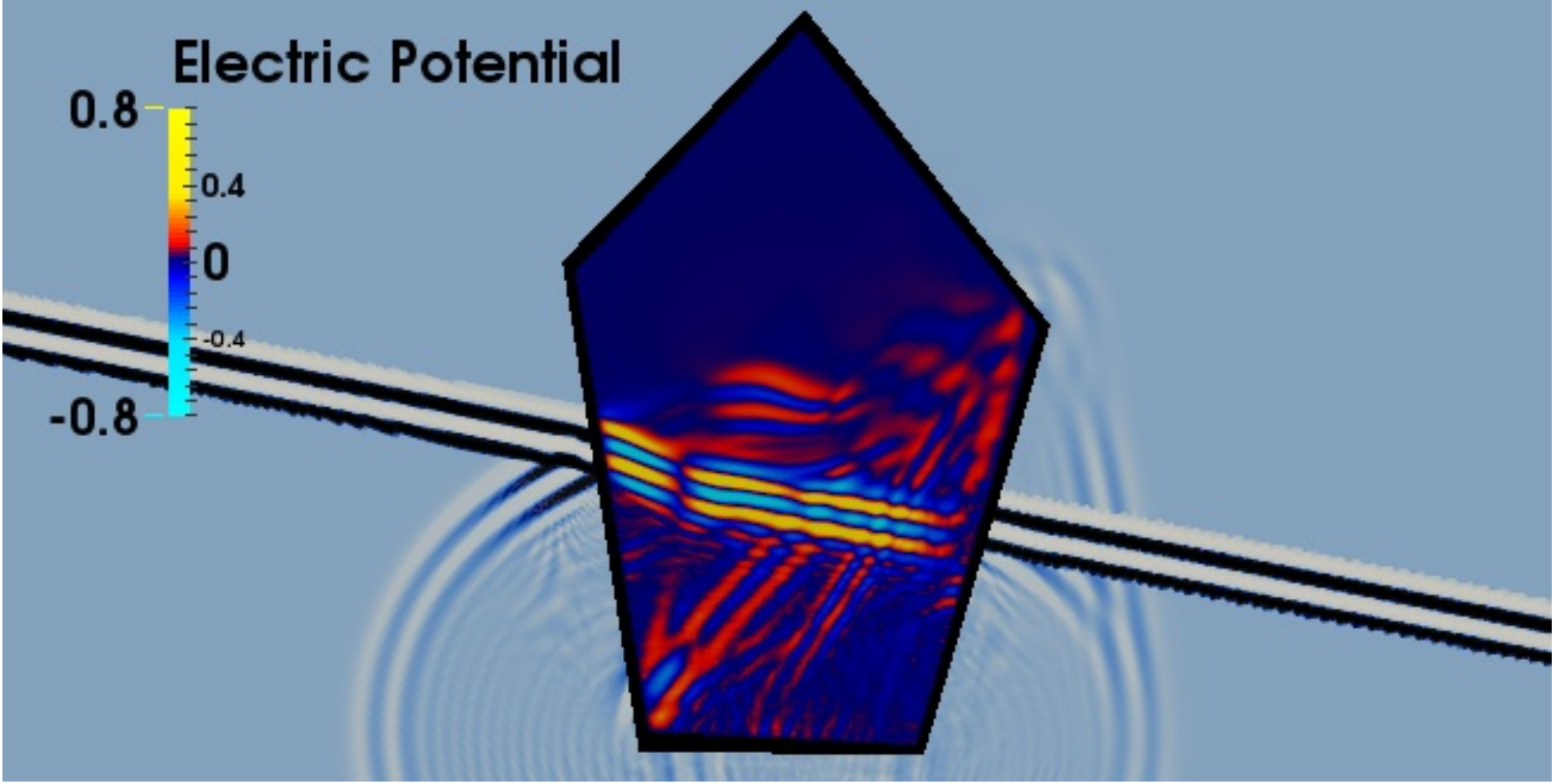}
\includegraphics[height =3.25cm]{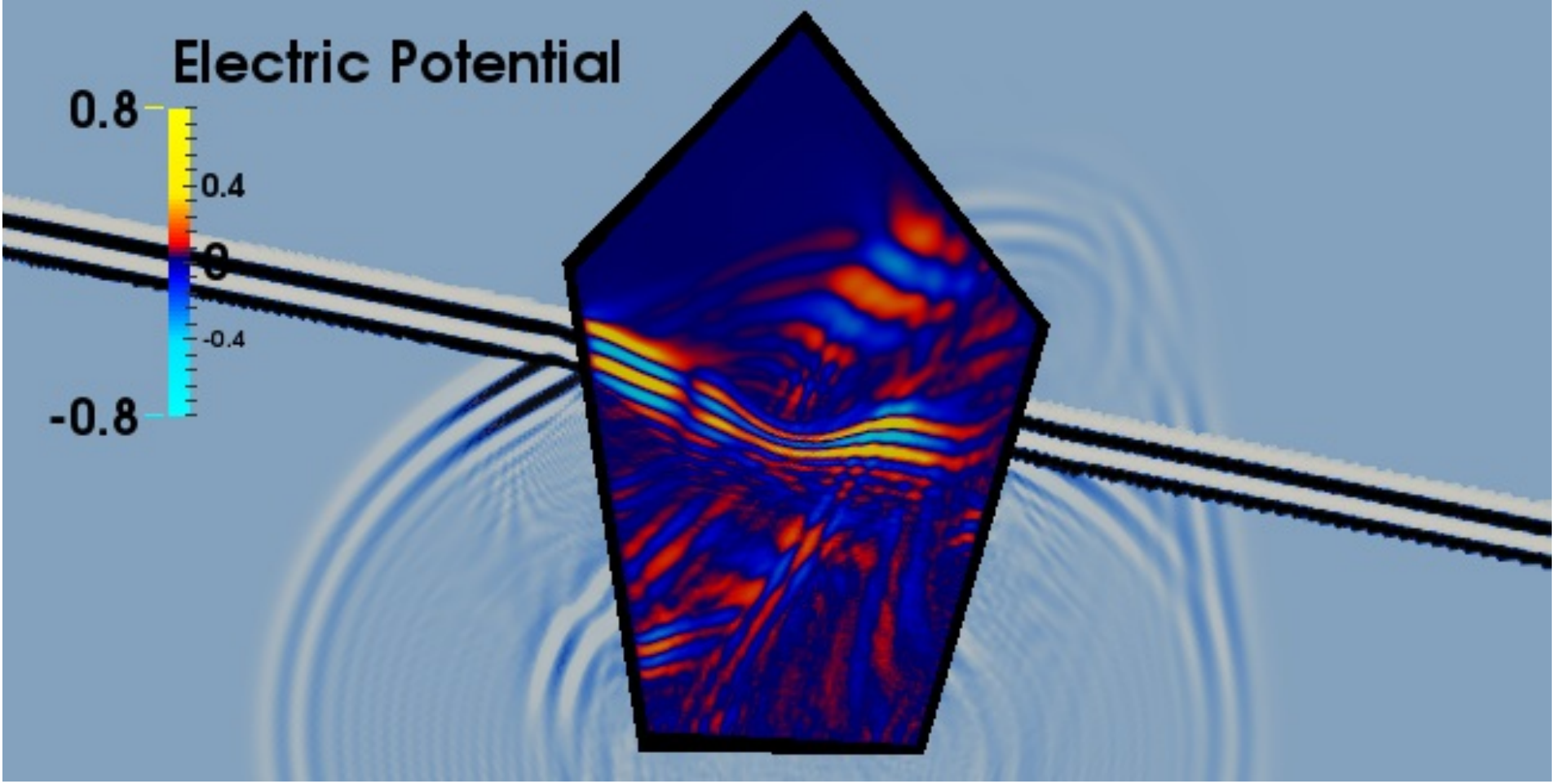}
\includegraphics[height =3.25cm]{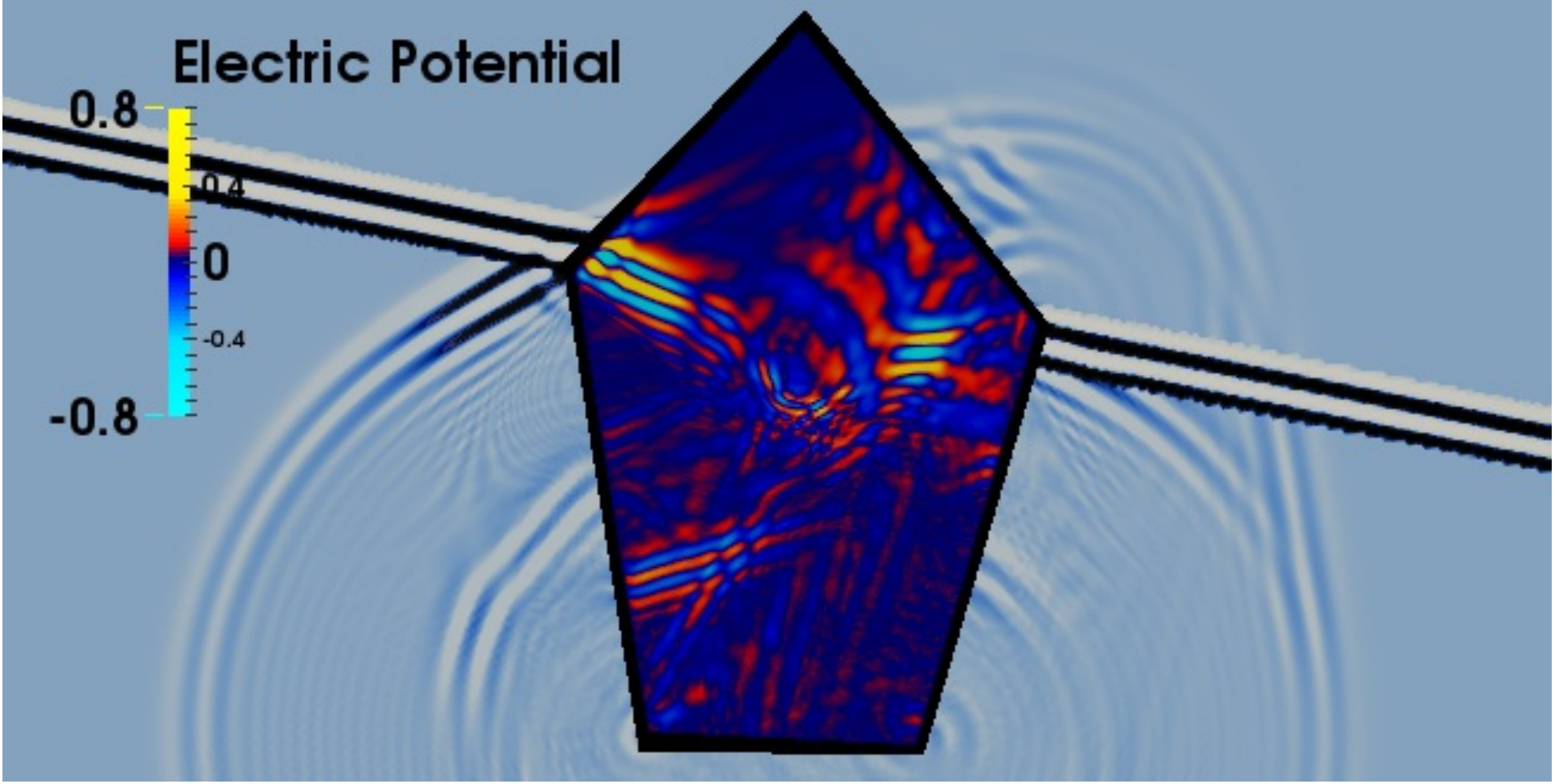}
\includegraphics[height =3.25cm]{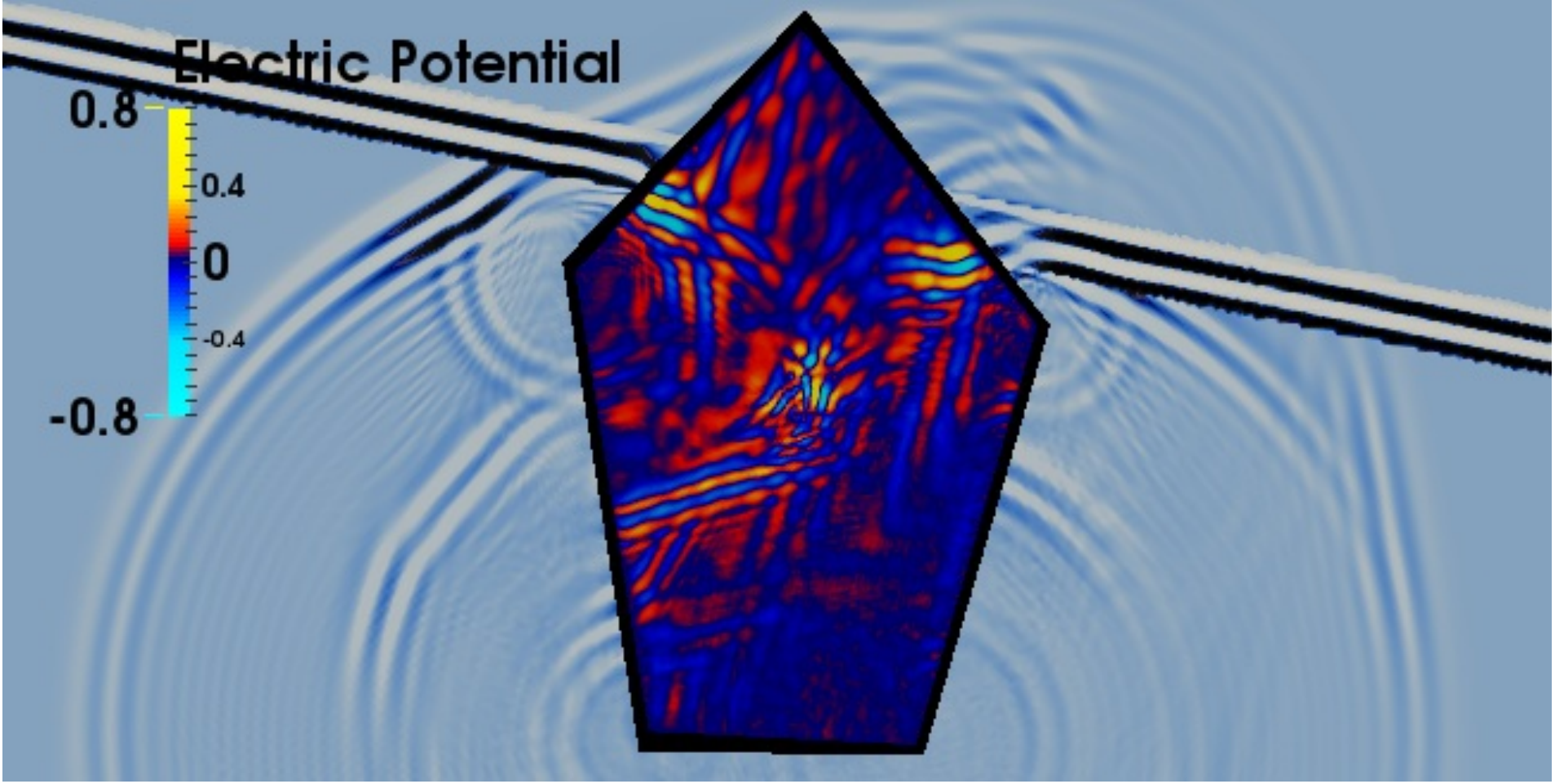}
\caption{{\footnotesize Close up of the electric potential at times $t=0.35, 0.525, 0.7, 0.875, 1.05, 1.225 $.}}\label{fig:7}
\end{figure} 
%
\section{Conclusions}
%
We have developed a well-posed integro-differential formulation for the interaction of acoustic waves and piezoelectric scatterers with Lipschitz boundaries in the transient regime. The formulation is geared towards a numerical implementation employing boundary elements for the acoustic field in the unbounded exterior domain and finite elements for the elastic and electric fields inside of the bounded scatterer, and can be easily implemented computationally building on existing FEM and BEM routines.

We have shown that the resulting stability bounds in the Laplace domain can be used to give explicit time-domain error bounds when BDF2-CQ is used for time discretization, resulting in a quasi-optimal in time scheme of order 2 for sufficiently smooth problem data. Numerical evidence strongly suggests that the Trapezoidal Rule based method has the same optimal convergence order and in fact may have better numerical properties such as a much shorter pre-asymptotic regime and smaller error constants than those for BDF2. 

\clearpage
%
\bibliography{referencesBEM}
%
\end{document}